\numberwithin{equation}{section}
\providecommand{\algorithmname}{Algorithm}
\newtheorem{theorem}{Theorem}[section]
\newtheorem{lem}{Lemma}[section]
\newtheorem{rem}{Remark}[section]
\newtheorem{prop}{Proposition}[section]
\newtheorem{ass}{Assumption}[section]
\newcounter{hypA}
\date{}
\newcommand{\D}{\bm{D}}
\newcommand{\HH}{\mathcal{H}}
\newcommand{\U}{\mathcal{U}}
\newcommand{\E}{\mathbb{E}}
\newcommand{\PP}{\mathbb{P}}
\DeclareMathOperator*{\argmin}{arg\,min}
\newcommand{\II}{\mathbbm{1}}
\newcommand{\ol}{\overline}
\newcommand{\R}{\mathbb{R}}
\newcommand{\Z}{\mathbb{Z}}
\newcommand{\N}{\mathbb{N}}
\newcommand{\sgn}{\mathrm{sgn}}
\newcommand{\vct}[1]{\bm{#1}}
\newcommand{\mtx}[1]{\bm{#1}}
\newcommand{\grad}{\nabla}
\newcommand{\diam}{\mathrm{diam}}
\renewcommand{\l}{\left}
\renewcommand{\r}{\right}
\newcommand{\musm}{\mu^{\mathrm{sm}}}
\newcommand{\mufi}{\mu^{\mathrm{fi}}}
\newcommand{\Cder}{C_{\mathrm{der}}}
\newcommand{\CJ}{C_{\mtx{J}}}
\newcommand{\BR}{\mathcal{B}_R}
\newcommand{\mr}{\mathrm}
\renewcommand{\phi}{\varphi}
\renewcommand{\epsilon}{\varepsilon}
\newcommand{\vmaxgeo}{v_{\mathrm{max}}^{\mathrm{geo}}}
\newcommand{\vmingeo}{v_{\mathrm{min}}^{\mathrm{geo}}}
\newcommand{\supp}{\mathrm{supp}}
\renewcommand{\u}{\vct{u}}
\renewcommand{\v}{\vct{v}}
\newcommand{\w}{\vct{w}}
\newcommand{\J}{\mtx{J}}
\newcommand{\tU}{\tilde{U}}
\newcommand{\TT}{\mathcal{T}}
\newcommand{\Lambdageo}{\Lambda^{\mathrm{geo}}}
\renewcommand{\H}{\mtx{H}}
\begin{document}

\begin{center}
{\Large \textbf{On Concentration Properties of Partially Observed Chaotic Systems}}

\vspace{0.5cm}

BY DANIEL PAULIN$^{1}$, AJAY JASRA$^{1}$,  DAN CRISAN$^{2}$ \& ALEXANDROS BESKOS$^{3}$

{\footnotesize $^{1}$Department of Statistics \& Applied Probability,
National University of Singapore, Singapore, 117546, SG.}\\
{\footnotesize E-Mail:\,}\texttt{\emph{\footnotesize paulindani@gmail.com, staja@nus.edu.sg}}\\
{\footnotesize $^{2}$Department of Mathematics,
Imperial College London, London, SW7 2AZ, UK.}\\
{\footnotesize E-Mail:\,}\texttt{\emph{\footnotesize d.crisan@ic.ac.uk}}\\
{\footnotesize $^{3}$Department of Statistical Science,
University College London, London, WC1E 6BT, UK.}\\
{\footnotesize E-Mail:\,}\texttt{\emph{\footnotesize a.beskos@ucl.ac.uk}}
\end{center}

\begin{abstract}
This article 
presents results on 
the concentration properties of the smoothing and filtering distributions of some partially observed chaotic dynamical systems.
We show that, rather surprisingly, for the geometric model of the Lorenz equations, as well as some other chaotic dynamical systems, the smoothing and filtering distributions do not concentrate around the true position of the signal, as the number of observations tends to infinity. 
Instead, under various assumptions on the observation noise, we show that the expected value of the diameter of the support of the smoothing and filtering distributions remains \emph{lower} bounded by a constant times the standard deviation of the noise, independently  of the number of observations. Conversely, under rather general conditions, 
the diameter of the support of the smoothing and filtering distributions are \emph{upper} bounded by a constant times the standard deviation of the noise. 
To some extent, applications to the three dimensional Lorenz 63' model and to the Lorenz
96' model of arbitrarily large dimension are considered.\\
\textbf{Keywords:}  Dynamical systems; Chaos; Filtering; Smoothing;  Lorenz equations \\
\textbf{MSC classification:} 37N10, 37D45, 62F15
\end{abstract}
\section{Introduction}

The filtering and smoothing problems are ubiquitous in many areas, such
as statistics, engineering, econometrics and meteorology; see for instance
\cite{dan} and the references therein. Such problems are concerned with inference of the current (filtering) or past (smoothing) positions of a partially observed dynamical system conditional
upon sequentially observed data. Perhaps the most well-studied class of filtering
and smoothing problems are those for which the unobserved signal follows a Markov chain in discrete time, and the observations
at the current time are, conditional upon the signal at the current time, independent
of all other random variables. This is the so-called state-space or hidden Markov model; see for instance \cite{cappe} for a book length introduction. For the aforementioned models, a wealth of results on long-time behaviour 
and concentration of the system exist; see for instance \cite{cappe,del2013mean,vanhandel}. Potentially less studied
in the literature are such results for the case for which the unobserved system is deterministic, with unknown initial condition (see \cite{lawstuart} for examples of this type of models). Such models have a wide class of applications, for instance,
in weather prediction (especially when the dynamics are chaotic), but there are relatively few mathematical results on the concentration of the smoother
and filter on the true position; see  \cite{pires1996extending, CerouLongtimebehaviour,AlonsoStuartLongtime, Lalleybeneaththenoise, LalleyNobel}. We include a detailed comparison with the latter two papers in Section \ref{seccomparisonlalley}. We note that \cite{pires1996extending} has studied this problem from a practical perspective, but the key statement (iii) in Section 3.3 is only applicable to uniformly hyperbolic systems, which excludes most practically relevant models (such as the Lorenz 63' model). The concentration properties of the smoother and the filter are important particularly when assessing the ability to fit such models to data.

In this paper we investigate the behaviour of the smoothing and filtering distributions of partially observed deterministic dynamical systems of the general form
\begin{equation}\label{diffeqgeneralform}
\frac{d \u }{d t}=-\mtx{A} \u -\mtx{B}(\u,\u)+\vct{f},
\end{equation} 
where $\u: \R^+\to \HH$ is a dynamical system in a Hilbert space $\HH$, $\mtx{A}$ is a linear operator on $\HH$, $\vct{f}\in \HH$ is a constant vector, and $\mtx{B}(\u,\u)$ is a bilinear form corresponding to the nonlinearity. In this paper we will work with finite dimensional systems, thus we assume that
\begin{equation}\label{Hdef}
\HH:=\R^d \text { for some }d\in \Z_+.
\end{equation} 
This is required due to the fact that in general it is not easy to obtain precise distributional information about an infinite dimensional system based on finite dimensional observations (unless only a finite dimensional part of the system is important, and the rest is negligible).

For $t\ge 0$, let $\v(t)$ denote the solution of \eqref{diffeqgeneralform} started from some $\v\in \R^d$. This can be shown to exist locally. The derivatives of the solution $\v(t)$ at time $t=0$ will be denoted by 
\begin{equation}
\D^i \v:=\l.\frac{d^i \v(t)}{dt^i}\r|_{t=0} \text{ for }i\in \N,
\end{equation}
in particular, $\D^0\v=\v$ , $\D\v:=\D^1\v=-\mtx{A} \v -\mtx{B}(\v,\v)+\vct{f}$ (the right hand side of \eqref{diffeqgeneralform}), and $\D^2\v=-\mtx{A} \D^1 \v-\mtx{B}(\D^1 \v,\v)-\mtx{B}(\v,\D^1 \v)$.

In order to ensure the existence of a solution to the equation \eqref{diffeqgeneralform} for every $t\ge 0$, we assume that there are constants $R>0$ and $\delta>0$ such that 
\begin{equation}\label{eqtrappingball}
\l<\D\v,\v\r> \le 0 \text{ for every }\v\in \R^d \text{ with }\|\v\|\in [R,R+\delta].
\end{equation}
We call this the \emph{trapping ball} assumption. Let $\BR:=\{\v\in \R^d: \|\v\|\le R\}$ be the ball of radius $R$. Using the fact that $\l<\frac{d}{dt} \v(t),\v(t)\r>= \frac{1}{2}\frac{d }{d t}\|\v(t)\|^2$, one can show that the solution to \eqref{diffeqgeneralform} exists for $t\ge 0$ for every $\v\in \BR$, and satisfies that $\v(t)\in \BR$ for $t\ge 0$.

%For $t\ge 0$, let $\v(t)$ denote the solution of \eqref{diffeqgeneralform} started from some $\v\in \R^d$. This can be shown to exist locally, but in order to ensure the existence of a global solution to the equation \eqref{diffeqgeneralform} for every $t\ge 0$, we assume that there is a constant $R>0$ such that 
%\begin{equation}\label{eqtrappingball}
%\left<\frac{d \v }{d t},\v\right> <0 \text{ for every }\v\in \R^d \text{ with }\|\v\|=R,
%\end{equation}
%where $\|\v\|$ denotes the $L^2$ (Euclidean) norm of $\v$. We call this the \emph{trapping ball} assumption. Let $\BR:=\{\v\in \R^d: \|\v\|\le R\}$ be the ball of radius $R$. Using the fact that $\left<\frac{d \v }{d t},\v\right>=\frac{1}{2}\frac{d }{d t}\|\v\|^2$, one can show that the solution to \eqref{diffeqgeneralform} exists for every initial point $\v\in \BR$, and satisfies that $\v(t)\in \BR$ for every $t\ge 0$.

The equation \eqref{diffeqgeneralform} was shown in \cite{AlonsoStuartLongtime} and \cite{Dataassimilation} to be applicable to three chaotic dynamical systems, the Lorenz 63' model, the Lorenz 96' model, and the Navier--Stokes equations on the torus; such models have many applications. 
We note that instead of the trapping ball assumption, they consider different assumptions on $\mtx{A}$ and $\mtx{B}(\v,\v)$. As we shall explain in Section \ref{SecPreliminaries}, their assumptions imply \eqref{eqtrappingball}, and thus the trapping ball assumption is more general.

%It is assumed that the nonlinearity is energy conserving, i.e. $\left<\mtx{B}(\v,\v),\v\right>=0$ for every $\v\in \HH$.
%We also assume that the linear operator $\mtx{A}$ is positive definite, i.e. there is a $\lambda_{\mtx{A}}>0$ such that $\left<\mtx{A}\v,\v\right>\ge \lambda_{\mtx{A}} \left<\v,\v\right>$ for every $\v\in \HH$.

This article will consider results associated to the concentration properties
of the smoother and filter. In particular, 
for the geometric model of the Lorenz equations, as well as some other chaotic dynamical systems the following is established. In case of uniform observation noise, the diameter of the smoother and the filter are random variables depending on the observations. We show that their expected value remains lower bounded by a constant times the standard deviation of the noise, independently  of the number of observations. In the case of Gaussian observation noise, we show similar results for the diameter of the region of points whose likelihood is no smaller than a constant times likelihood at the true position. In addition, for the geometric model, under uniform noise assumption, we show that asymptotically in time, the smoother concentrates around a small line segment whose length is proportional to the standard deviation of the noise. Due to the substantial complexity of the dynamics of chaotic systems, such as the Lorenz 63' model, even the simple property of the sensitivity to the initial conditions have been only recently established by Tucker in \cite{Tucker2002}.
In this work a complex computer assisted proof was developed. We have only rigorously verified our assumptions required for the lower bounds for the geometric model of the Lorenz 63' equations. However, in order to show the practical relevance of our work, we include some numerical illustrations of the assumptions that are adopted, that seem to justify them in case of the Lorenz 63' and 96' models. It is stressed that establishing the conditions in such scenarios seems to require a concerted effort, which is beyond the scope of the current work.

We also consider upper bounds. For bounded noise distributions, under rather general conditions on the dynamics, the observation operator and the number of observations, the diameter of the support of the smoothing and filtering distributions are upper bounded by a constant times the standard deviation of the noise. This is generalised to noise distributions with unbounded support, where it is shown that the mean square error of some appropriate estimators for the initial position are of the same order as the variance of the observation noise. The assumptions required by these results are rigorously checked for the Lorenz 63' and Lorenz 96' models. We also check them for the case of randomly chosen coefficients. 

The lower bounds essentially tell us what is the best possible theoretical precision achievable by filtering/smoothing methods. They suggest that for such deterministic chaotic dynamical systems, noisy observations that are far in the future (or far in the past) typically do not contain much information that is useable for more accurate estimation of the initial position (or the current position, respectively).
These novel results are, to the best of our knowledge, the first in this area. They are also perhaps quite surprising, given the structure of the dynamical system.
The upper bounds imply that high precision filtering and smoothing is theoretically possible in almost every partially observed deterministic dynamical system given sufficiently precise observations (the only requirement is that given sufficient amount of noise-free observations, the initial position of the system is uniquely determined).
% We believe that even when the synchronisation-type assumptions used for justifying nonlinear observers (such as the 3DVAR) do not hold (see \cite{AlonsoStuartLongtime}). 

The structure of the paper is as follows. In Section \ref{SecPreliminaries}, we show some preliminary results about dynamical systems of the form \eqref{diffeqgeneralform}.
Section \ref{secgeometric} introduces the Lorenz 63' equations, and their corresponding geometric model. 
Our lower bounds for the geometric model are also presented in this section.
Section \ref{seclowerbounds} generalises the results to a larger class of dynamical systems. We state results for both uniform and Gaussian additive observation errors.  In Section \ref{secupperbounds}, we give upper bounds for the smoothing and filtering distributions for partially observed dynamical systems of the form \eqref{diffeqgeneralform}. The Appendix contains the proofs of a few technical lemmas for the geometric model and the proofs of some lower bounds based on assumptions on the return map to a plane.

\subsection{Preliminaries}\label{SecPreliminaries}
We now give some notations and basic properties of systems of the form \eqref{diffeqgeneralform} for use in the later sections. 

The one parameter solution semigroup will be denoted by
$\Psi_t$, thus for a starting point $\u\in \R^d$, the solution of \eqref{diffeqgeneralform} will be denoted by 
$\Psi_t(\u)$, or equivalently, $\u(t)$. The coordinates of the solution of \eqref{diffeqgeneralform} will be denoted by $\Psi_t^1(\u),\ldots, \Psi_t^d(\u)$, or equivalently,  $u_1(t),\ldots, u_d(t)$.

\cite{AlonsoStuartLongtime} and \cite{Dataassimilation} have assumed that the nonlinearity is energy conserving, i.e. $\left<\mtx{B}(\v,\v),\v\right>=0$ for every $\v\in \R^d$. They also assume that the linear operator $\mtx{A}$ is positive definite, i.e. there is a $\lambda_{\mtx{A}}>0$ such that $\left<\mtx{A}\v,\v\right>\ge \lambda_{\mtx{A}} \left<\v,\v\right>$ for every $\v\in \R^d$. As explained on page 50 of \cite{Dataassimilation}, \eqref{diffeqgeneralform} together with these assumptions above implies that for every $\v\in \R^d$,
\begin{equation}\label{eqabsorbingset}
\frac{1}{2}\frac{d}{dt} \|\v\|^2 \le \frac{1}{2\lambda_{\mtx{A}}}\|\vct{f}\|^2-\frac{\lambda_{\mtx{A}}}{2}\|\v\|^2.
\end{equation}
From \eqref{eqabsorbingset} one can show that set $\mathcal{B}_R$ is an absorbing set for any $R>\frac{\|\vct{f}\|}{\lambda_{\mtx{A}}}$ (thus all paths enter into this set, and they cannot escape from it once they have reached it). This in turn implies the existence of a global attractor (see e.g. \cite{Temam}, or Chapter 2 of \cite{Stuartdynamicalsystems}). Moreover, the trapping ball assumption \eqref{eqtrappingball} holds. In the two applications considered in this paper (the Lorenz 63' and 96' models), the energy conserving property of $\mtx{B}$ and the positive definiteness of $\mtx{A}$ were checked in \cite{LawAnalysis3DVAR63} and \cite{law2014controlling}, respectively.

For a differentiable function $g: \R^{d} \to \R^{d}$ with components $g(\v)=(g_1(\v),\ldots,g_{d}(\v))$, we define its Jacobian for every $\v\in \R^d$, denoted by $\mtx{J} g (\v)$ or equivalently $\J_{\v}(g)$, as a $d\times d$ matrix with elements $\left(\frac{\partial g_i(\v)}{\partial v_j}\right)_{1\le i,j\le d}$ (so the $i$th row contains the partial derivatives of $g_i$).

Based on \eqref{diffeqgeneralform}, we have that for any two points $\v, \w\in \BR$, any $t\ge 0$,
\[\frac{d}{dt}(\v(t)-\w(t))=-\mtx{A} (\v(t)-\w(t)) -(\mtx{B}(\v(t),\v(t)-\w(t))-\mtx{B}(\w(t)-\v(t),\w(t))),\]
and therefore by Gr\"{o}nwall's inequality, we have that for any $t\ge 0$,
\begin{equation}\label{eqpathdistancebound}\exp( -G t)\|\v-\w\|\le \|\v(t)-\w(t)\|\le \exp( G t)\|\v-\w\|,\end{equation}
for a constant 
\begin{equation}\label{Gdef}
G:=\|\mtx{A}\|+2\|\mtx{B}\| R,
\end{equation}
where $\|\mtx{A}\|$ denotes the $L^2$ norm of $\mtx{A}$, and $\|\mtx{B}\|:=\sup_{\v,\w\in \R^d: \|\v\|=1, \|\w\|=1} \|\mtx{B}(\v,\w)\|$.

Let $\Psi_{t_k}(\BR):= \{\Psi_{t_k}(\v): \v\in \BR\}$, then from inequality \eqref{eqpathdistancebound}, it follows that $\Psi_{t_k}:\BR\to \Psi_{t_k}(\BR)$ is a one-to-one mapping, which has an inverse that we are going to denote as $\Psi_{-t_k}: \Psi_{t_k}(\BR)\to \BR$.

We are going to describe next our assumptions about the observations. The system is observed at time points $t_j=j h$ for $j=0,1,\ldots$, with observations $\vct{Y}_j:=\mtx{H} \u(t_j) + \vct{Z}_j$, where $\mtx{H}: \R^d \to \R^{d_o}$ is a linear operator, and $(\vct{Z}_j)_{j\ge 0}$ are i.i.d.~centered random vectors taking values in $\R^{d_o}$ describing the noise. We assume that these vectors have distribution $\eta$ that is absolutely continuous with respect to the Lebesgue measure. We assume a prior $q$ on the initial condition, that is absolutely continuous with respect to the Lebesgue measure, and zero outside the ball $\BR$ (where the value of $R$ is determined by the trapping ball assumption \eqref{eqtrappingball}).

The main quantities of interest of this paper are the smoothing and filtering distributions corresponding to the conditional distribution of $\u(t_0)$ and $\u(t_k)$, respectively, given the observations $\vct{Y}_0,\ldots,\vct{Y}_k$. 
The densities of these distributions will be denoted by $\musm(\v|\vct{Y}_0,\ldots,\vct{Y}_k)$ and $\mufi(\v|\vct{Y}_0,\ldots,\vct{Y}_k)$, and they can be expressed as
\begin{align}
\label{eqmusm}&\musm(\v|\vct{Y}_0,\ldots,\vct{Y}_k)=\left[\prod_{i=0}^k \eta\left(\vct{Y}_i -\mtx{H}\Psi_{t_i}(\v)\right)\right]\cdot q(\v)/Z_{k}^{\mathrm{sm}}\text{ for }\v\in \BR, \text{and }\\
\nonumber&\musm(\v|\vct{Y}_0,\ldots,\vct{Y}_k)=0\text{ for }\v\notin \BR\\
\label{eqmufi}&\mufi(\v|\vct{Y}_0,\ldots,\vct{Y}_k)\\
\nonumber&=\left[\prod_{i=0}^k\eta\left(\vct{Y}_i-\mtx{H}\Psi_{t_i-t_k}(\v) \right)\right]\cdot \mathrm{det}(\mtx{J}\Psi_{-t_k}(\v)) \cdot q(\Psi_{-t_k}(\v))/Z_{k}^{\mathrm{fi}} \,\text{ for }\,\v\in \Psi_{t_k}(\BR), \text{ and}\\
\nonumber&\mufi(\v|\vct{Y}_0,\ldots,\vct{Y}_k)=0 \,\text{ for }\,\v\notin \Psi_{t_k}(\BR),
\end{align}
where $\mathrm{det}$ stands for determinant,  and $Z_{k}^{\mathrm{sm}}, Z_{k}^{\mathrm{fi}}$ are normalising constants independent of $\v$.  Since the determinant of the inverse of a matrix is the inverse of its determinant, we have the equivalent formulation
\begin{equation}\label{detalternativeq}
\mathrm{det}(\mtx{J}\Psi_{-t_k}(\v)) =\left(\mathrm{det}(\mtx{J}_{\Psi_{-t_k}(\v)}\Psi_{t_k})\right)^{-1}.
\end{equation}
In accordance with the usual definition in the literature, we will call the \emph{support of the smoother} the set of points $\v$ in $\BR$ where the density \eqref{eqmusm} is non-zero (and analogously where \eqref{eqmufi} is non-zero for the filter).

For $t\ge 0$, let $\v(t)$ denote the solution of \eqref{diffeqgeneralform} started from some $\v\in \R^d$.   Using  \eqref{diffeqgeneralform} and \eqref{eqtrappingball}, we have that 
\begin{align}\label{eqvmax}&\sup_{\v\in \BR, t\ge 0}\left\|\frac{d \v(t)}{d t}\right\|\le v_{\max}:=\|\mtx{A}\|R+\|\mtx{B}\|R^2+\|\vct{f}\|, \text{ and}\\
&\label{eqamax}\sup_{\v\in \BR}\left\|\mtx{J}_{\v} \left(\frac{d \v}{d t}\right)\right\|\le a_{\max}:=\|\mtx{A}\|+2\|\mtx{B}\|R.
\end{align}
By induction, we can show that for any $i\ge 2$, and any $\v\in \R^d$, we have
\begin{equation}\label{udereq}\D^i \v=-\mtx{A}\cdot \D^{i-1}\v-\sum_{j=0}^{i-1}  {i-1 \choose j} \mtx{B}\left(\D^j,\D^{i-1-j}\v\right).
\end{equation}
From this, it follows that for any $i\ge 0$, $\v\in \BR$ we have
\begin{align}\label{uderboundeq}
\left\|\D^i \v\right\|&\le C_0 \Cder^i \cdot i!, \text{ and }\\
\label{ugradboundeq}\left\|\mtx{J}_{\v} \left(\D^i \v\right)\right\|&\le C_{\mtx{J}}^i \cdot i!,
\end{align}
where $C_0:=R+\frac{\|\vct{f}\|}{\|\mtx{A}\|}$, $\Cder:=\|\mtx{A}\|+\|\mtx{B}\|R+\frac{\|\mtx{B}\|}{\|\mtx{A}\|} \|\vct{f}\|$, and
$C_{\mtx{J}}:=2\Cder$. To see this, it suffices to first verify \eqref{uderboundeq} and \eqref{ugradboundeq} for $i=0$ and $i=1$, and then use induction and the recursion formula \eqref{udereq} for $i\ge 2$. It is possible to prove the existence and finiteness of $\J\Psi_{t_k}(\v)$ for any $\v\in \BR$, $t_k\ge 0$ based on \eqref{ugradboundeq} and the Taylor expansion (if $t_k<C_{\mtx{J}}^{-1}$, then the Taylor expansion converges, while if $t_k\ge C_{\mtx{J}}^{-1}$, then we can write it as $t_k=a_1+\ldots+a_l$ for some $0< a_1,\ldots, a_l<C_{\mtx{J}}^{-1}$, and use the chain rule in computing $\J\Psi_{a_1}(\ldots \Psi_{a_l}(\v)\ldots)$ ).

\subsection{Comparison with the work of Lalley and Nobel}\label{seccomparisonlalley}
\cite{Lalleybeneaththenoise} has studied the statistical behaviour of hyperbolic maps, and the results were extended in \cite{LalleyNobel} under weaker assumptions. They consider invertible maps $F:\Lambda\to \Lambda$ for some compact set $\Lambda\in \R^d$, such that $F^{(i)}(x)$ exists for every $i\in \Z$ and $x\in \Lambda$ ($F^{(i)}(x)=(F^{-1})^{(i)}(x)$ for $i<0$). In practice this means that $\Lambda$ is usually chosen as an attractor of the system. 

\cite{LalleyNobel} calls an invertible map $F: \Lambda\to \Lambda$ \emph{expansive} if there is an absolute constant $\Delta>0$ such that for every $\v,\v'\in \Lambda$ with $\v\ne \v'$, $\sup_{s\in \Z} \|F^{(s)}(\v)-F^{(s)}(\v')\|>\Delta$. 
Based on this assumption (which can be proven for hyperbolic maps), they show that if we observe the system with bounded observation noise whose maximum size $\epsilon$ satisfies that $\epsilon\le \Delta/5$, then as we get more and more two sided observations (both from the future and the past), the position of the system can be determined with arbitrary precision. They propose an algorithm called Smoothing Algorithm D that allows one to recover the positions $F^{(k)}(\u),F^{(k+1)}(\u),\ldots, F^{(n-k)}(\u)$ given observations $\vct{Y}_{0:n}$ when $k$ and $n$ tends to infinity at the right rate. \cite{LalleyNobel} also considers Gaussian observation errors, and shows that under some conditions (in particular, for hyperbolic systems), even if we would have all the observations $(\vct{Y}_s)_{s\in \Z}$, there would still not exist any measurable function that recovers the initial position.

The results of the present paper differ from these earlier results in several ways. Firstly, we do not assume that the state space $\Lambda$ is invariant with respect to the map $F$, thus the inverse $F^{-1}$ and its iterates might not be defined at every point $\v\in \Lambda$ (indeed, the differential equation \eqref{diffeqgeneralform} cannot in general be solved backwards in time globally for every $\v\in \BR$). Secondly, we do not assume hyperbolicity, or expansiveness of the map. Finally, because we consider the smoothing and filtering distributions, i.e. the distributions of $\u(t_0)$ and $\u(t_n)$ given $\vct{Y}_{0:n}$, we do not have access to two sided observations, thus the Smoothing Algorithm D of \cite{LalleyNobel} and its variants  are not applicable. Therefore even if at first sight it might seem that our lower bounds contradict the fact that  \cite{LalleyNobel} proves that the path can be recovered if the size of the bounded noise if sufficiently small, this is due to the fact that they use two sided observations, while we do not. We have verified that even for Smale's solenoid mapping (an example of \cite{Lalleybeneaththenoise}), the lower bounds of Theorems \ref{thmlowerboundsmoother} and \ref{thmlowerbndfilter} are applicable.

Since our main object of interests are chaotic differential equations of the form \eqref{diffeqgeneralform}, our results are presented in terms of continuous time mappings, in contrast with the discrete time mappings of \cite{LalleyNobel}. Although they could be rewritten as discrete time mappings, we feel that this would introduce additional abstraction, and make the presentation less clear.

\section{The Lorenz equations, and their geometric model}\label{secgeometric}
In this section we study the behaviour of the smoothing and filtering distributions for the geometric model associated to the Lorenz equations. We introduce the model in Section \ref{secLorenzIntro}. This is followed by lower bounds on the diameter of the support of the smoother and filter, assuming bounded observation noise, deduced in Sections \ref{seclowerboundsgeometricsmoother} and \ref{seclowerboundsgeometricfilter}. Finally, we analyse the limit of the support of the smoothing distribution as the number of observations tends to infinity in Section \ref{secgeometricupperbounds}.

\subsection{Introduction to the model}\label{secLorenzIntro}
Lorenz has introduced the following system of equations in \cite{lorenz1963deterministic}, 
\begin{align}
\frac{d u_1}{dt}&=a(u_2-u_1),\label{lorenz63eq1}\\
\frac{d u_2}{dt}&=ru_1-u_2-u_1u_3,\label{lorenz63eq2}\\
\frac{d u_3}{dt}&=-bu_3+u_1u_2.\label{lorenz63eq3}
\end{align}
Lorenz has set the values of the parameters as $a=27$, $b=\frac{8}{3}$, and $r=10$. For these choice of parameters, it was observed that these equations have bounded solutions, but surprisingly, they are very sensitive to the choice of initial conditions. For almost every two starting points $\u$ and $\v$, the solutions $\u(t)$ and $\v(t)$ are eventually further apart than some absolute constant $\xi>0$ for some $t>0$. This chaotic behaviour was quite different from the behaviour of previously studied dynamical systems. Since then, considerable effort has been spent on understanding such systems, in particular due to the application of such models to weather forecasting. Rigorously justifying the chaotic behaviour for the original Lorenz equations nevertheless has proven to be a challenging problem, which was only settled rather recently by Tucker, who has given a computer assisted proof \cite{Tucker2002}. One key difficulty is the fact that the equations cannot be solved analytically. Another is that the solution might spend arbitrarily long time near the origin (which is a stationary point).

Since the chaotic behaviour of the Lorenz equations was difficult to analyse directly, \cite{guckenheimer1976strange} and \cite{afraimovich1977origin} have independently proposed the so-called \emph{geometric model} associated to the Lorenz equations. This is still a 3 dimensional dynamical system which can be described by time independent differential equations, and it was conjectured that it shares many features of the original equations. Due to its particular form, it is analytically solvable, and in \cite{guckenheimer1976strange} it was shown that it has sensitive dependence to initial conditions.

In this section we define the geometric model and describe some of its properties. The description is based on \cite{guckenheimer1976strange} and \cite{Lorenzlikeflows}. Although this is a rather simple analytically solvable model, we believe that its behaviour is similar to many other more complex chaotic systems (and, as we shall see in Section \ref{seclowerbounds}, we generalise some of the results obtained for this model to some other chaotic dynamical systems).

The geometric model of the Lorenz equations consists of two parts. In the first part, the flow is going downwards from a square $S$ to one of two cusps $\Sigma^+$ or $\Sigma^-$ (see Figure \ref{geometric:sub1}). In the second part, the flow is going upwards from these two cusps back to the square $S$ (see Figure \ref{geometric:sub2}).
Note that this flow is only defined for points inside a bounded set (consisting of the union of paths started from $S$ until they first return to $S$). In the following few paragraphs, we give a precise definition of the flow and explain how is it related to the Lorenz 63' equations.

One particular feature of the Lorenz equations is that near the origin, through conjugation they can be shown to be equivalent  to a linear system of the form
\[\left(\frac{d u_1}{dt},\frac{d u_2}{dt},\frac{d u_3}{dt}\right)=(\lambda_1 u_1, -\lambda_2 u_2, -\lambda_3 u_3), \text{ with }0<\lambda_3<\lambda_1<\lambda_2.\]
The solution of these equations is given by 
\begin{equation}\label{eqflow}
\Psi^{\mathrm{lin}}_t(\u)= \left(u_1 e^{\lambda_1 t}, u_2 e^{-\lambda_2 t}, u_3 e^{-\lambda_3 t}\right).
\end{equation}
This particular form means that nearby points can take arbitrarily long time to escape from the neighbourhood of the origin.

Let us denote the so called \emph{return square} by 
\[S:=\{(u_1,u_2,1): |u_1|\le 1/2, |u_2|\le 1/2\}.\] This square is in transverse direction to flow \eqref{eqflow}, which is going downwards in direction $u_3$ when passing through it. Let 
\begin{align*}
&S^-:=\{(u_1,u_2,1)\in S: u_1<0\}, S^+:=\{(u_1,u_2,1)\in S: u_1>0\},S^*=S^- \cup S^+,\\
&\Gamma:=\{(u_1,u_2,1)\in S: u_1=0\}, \Sigma:=\{(u_1,u_2,u_3): |u_1|=1\}.
\end{align*}
In the geometric model, the points started from $S$ start according to equations \eqref{eqflow} until they reach $\Sigma$ (the points on $\Gamma$ will converge to the origin and never reach $\Sigma$). 

Based on \eqref{eqflow}, we can see that the time it takes for a path started from a point $\u\in S^*$ to reach $\Sigma$ is $\tau_{\Sigma}(\u):=\frac{1}{\lambda_1} \log(1/|u_1|)$. The location of the exit point will be
\begin{align*}
\Psi^{\mathrm{lin}}_{\tau_{\Sigma}(\u)}(\u)&=\left(\sgn(u_1),u_2 e^{\lambda_2 \tau_{\Sigma}(\u)}, e^{\lambda_3 \tau_{\Sigma}(\u)} \right)\\
&=\left(\sgn(u_1), u_2  |u_1|^{\frac{\lambda_2}{\lambda_1}}, |u_1|^{\frac{\lambda_3}{\lambda_1}} \right).
\end{align*}
Let $\alpha:=\frac{\lambda_3}{\lambda_1}$ and $\beta:=\frac{\lambda_2}{\lambda_1}$, then $0<\alpha<1<\beta$, and 
\begin{equation}\label{Ldefeq}
L(\u):=\Psi^{\mathrm{lin}}_{\tau_{\Sigma}(\u)}(\u)=(\sgn(u_1),u_2 |u_1|^{\beta}, |u_1|^{\alpha}).
\end{equation}
As we can see on Figure \ref{geometric:sub1}, the function $L$ maps the two half squares $S^-$ and $S^+$ into cusps (triangles with curved edges). We denote these cusps by $\Sigma^-$ and $\Sigma^+$, respectively.

\begin{figure}
\begin{subfigure}{0.385\linewidth}
\centering
\includegraphics[width=\linewidth]{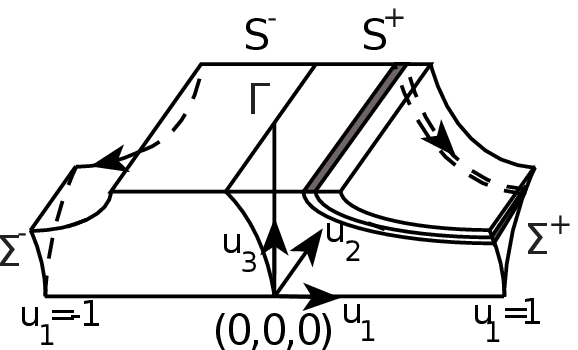}%scale=1,bb=0 0 180 110, 
\caption{Flow from $S^*$ to $\Sigma$}
\label{geometric:sub1}
\end{subfigure}
\begin{subfigure}{0.385\linewidth}
\centering
\includegraphics[width=\linewidth]{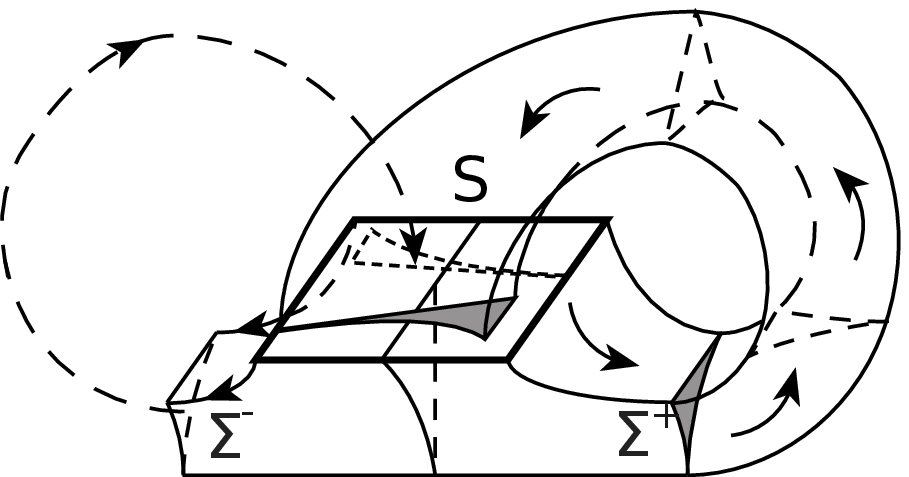}%scale=1,bb=0 0 280 150,
\caption{Flow from $\Sigma$ to $S$}
\label{geometric:sub2}
\end{subfigure}
\begin{subfigure}{0.21\linewidth}
\centering
\includegraphics[width=\linewidth]{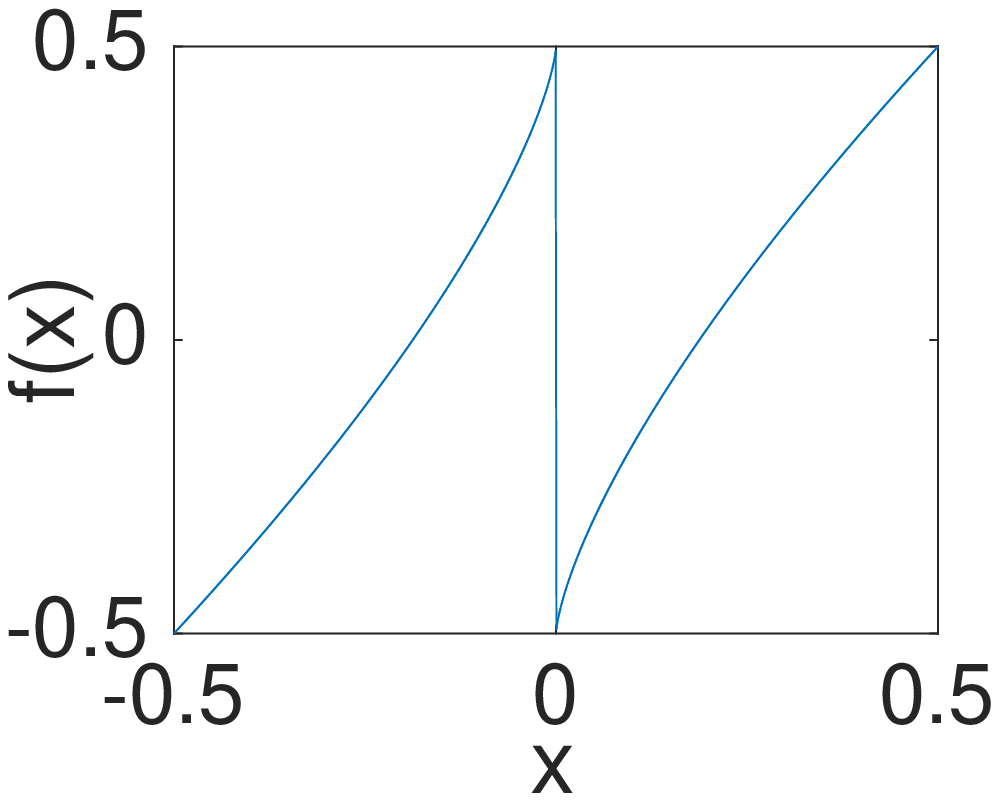}%scale=1,bb=0 0 287 237, 
\caption{The function $f$}
\label{geometric:sub3}
\end{subfigure}
\caption{Illustration of the geometric model of the Lorenz 63' equations}
\label{fig:test}
\end{figure}

The vertices of these cusps are given by
\begin{align*}
\Sigma^+_1&:=(1,0,0), \quad \Sigma^-_1:=(-1,0,0),\\
\Sigma^+_2&:=\left(1,\left(\frac{1}{2}\right)^{1+\beta},\left(\frac{1}{2}\right)^{\alpha}\right), \quad
 \Sigma^-_2:=\left(-1,\left(\frac{1}{2}\right)^{1+\beta},\left(\frac{1}{2}\right)^{\alpha}\right),\\
\Sigma^+_3&:=\left(1,-\left(\frac{1}{2}\right)^{1+\beta},\left(\frac{1}{2}\right)^{\alpha}\right), \quad
\Sigma^-_3:=\left(-1,-\left(\frac{1}{2}\right)^{1+\beta},\left(\frac{1}{2}\right)^{\alpha}\right).
\end{align*}

Once the paths have reached cusp $\Sigma^+$ (or $\Sigma^-$), they move back to the return square $S$ via a linear transformation which is a composition of a rotation around the line $(1,u_2,1)$ (or $(-1,u_2,1)$) by $\frac{3\pi}{2}$, an expansion in the $u_1$ direction by a factor $\theta$, and translation by $-\frac{1}{2}$ in the $u_1$ direction and by $-\frac{1}{4}$ in the $u_2$ direction (or by $\frac{1}{2}$ in the $u_1$ direction and by $\frac{1}{4}$ in the $u_2$ direction, respectively, for $\Sigma^-$). 

This means that a point $\v\in \Sigma$ will be mapped to the point on $S$ defined as
\begin{equation}\label{eqSigmatoS}
\Psi_{\Sigma\to S}(\v):=\bigg \{\begin{array}{ll} (\theta v_3-\frac{1}{2}, v_2-\frac{1}{4},1) &\text{ for }\v\in \Sigma^+,\\
 (-\theta v_3+\frac{1}{2}, v_2+\frac{1}{4},1)&\text{ for }\v\in \Sigma^-.\end{array}
\end{equation}
In order for the construction to be consistent (that is, none of the paths started at different points of $S$ intersect until their first return), we make the following assumptions on the eigenvalues $\lambda_1, \lambda_2, \lambda_3$ and the parameter $\theta$.

\begin{ass}\label{assalphabetatheta}
Suppose that
\begin{enumerate}
\item the coefficient $\alpha=\frac{\lambda_3}{\lambda_1}$ satisfies that $\frac{1}{\sqrt{2}}<\alpha<1$,
\item the coefficient $\beta=\frac{\lambda_2}{\lambda_1}$ satisfies that $\beta>\frac{\log(6)}{\log(2)}-1$,
\item the coefficient $\theta$ satisfies that $\frac{2^{\alpha}}{\sqrt{2}\alpha}<\theta<2^{\alpha}$.
\end{enumerate}
\end{ass}
This process is illustrated on Figure \ref{geometric:sub2}.

\cite{Lorenzlikeflows} has defined the three transformations (rotation, expansion, and translation) precisely. These specify $\Psi_{\Sigma\to S}(\v)$, however, the exact time evolution of the process from $\Sigma$ to $S$ was not given because this was not needed for the purpose of showing the sensitivity of the model with respect to initial conditions (except that they have assumed that we reach $S$ from any point on $\Sigma$ in a  bounded amount of time). For the sake of completeness, here we make a specific choice of this evolution. Any point on $\Sigma^+$ (or $\Sigma^-$) will take $\frac{3\pi}{2}$ time to reach the return square $S$ (the time parameter $t$ expresses the angle of the rotation). For the evolution of the points of $\Sigma^+$, we will use the polar coordinate system
\[(u_1,u_2,u_3)=(1+r\sin(\phi), u_2, 1-r\cos (\phi)).\]
In this coordinate system, $\phi$ represents the angle of rotation we have done along the line $(1,u_2,1)$, $r$ represents the distance from the line $(1,u_2,1)$, and finally $u_2$ represents the $u_2$-coordinate.

The evolution of the angle $\phi$ can be chosen linearly in time, that is, $\phi(s)=s$ for $0\le s\le (3/2)\pi$. The transformation of the coordinate $u_2$ from $u_2$ to $u_2-\frac{1}{4}$ can be defined to happen linearly in time too, that is, 
\begin{equation}\label{yeveq}
u_2(s)=u_2-\frac{1}{4}\cdot \frac{s}{(3/2)\pi}.
\end{equation}
Finally, due to \eqref{eqSigmatoS}, the evolution of $r(s)$ has to satisfy the conditions that
$r(0)=1-u_3$ and $r\left(\frac{3}{2}\pi\right)=\frac{3}{2}-\theta u_3$. These are satisfied by the linear interpolation 
\begin{equation}\label{reveq}
r(s):=1-u_3+\left(\frac{1}{2}-(\theta-1) u_3\right)\cdot \frac{s}{(3/2)\pi} \text{ for }0\le s\le \frac{3}{2}\pi.
\end{equation}
Thus the flow from $\Sigma^+$ to $S$ for time $0\le s\le (3/2)\pi$ is given by the equations
\begin{equation}\label{SigmaptoSeveq}
\Psi_s^{\mathrm{rot}}(\u):=\left(1+r(s) \sin(s), u_2-\frac{1}{4}\cdot \frac{s}{(3/2)\pi},  1-r(s)\cos (s)\right) \text{ for }\u\in \Sigma^+.
\end{equation}
Similarly, using the same definition of $r(s)$, we can write the flow from $\Sigma^-$ to $S$ for time $0\le s\le (3/2)\pi$ as
\begin{equation}\label{SigmamtoSeveq}
\Psi_s^{\mathrm{rot}}(\u):=\left(-1-r(s) \sin(s), u_2+\frac{1}{4}\cdot \frac{s}{(3/2)\pi},  1-r(s)\cos (s)\right)\text{ for }\u\in \Sigma^-.
\end{equation}
It is not difficult to see that these two flows do not intersect at any time point $0\le s\le (3/2)\pi$. Firstly, for $0\le s\le \pi$, we have $u_1(s)\ge 1$ for the flow started from $\Sigma^+$, and $u_1(s)\le -1$ for the flow started from $\Sigma^-$. For the flow started at $\Sigma^+$, we have $u_2\le \left(\frac{1}{2}\right)^{1+\beta}$, so for $\pi<s\le (3/2)\pi$, we have $u_2(s)<\left(\frac{1}{2}\right)^{1+\beta}-\frac{1}{4}\cdot \frac{2}{3}<0$   by Assumption \ref{assalphabetatheta}. It can be shown similarly that $u_2(s)>0$ for $\pi<s\le (3/2)\pi$ for the flow started from $\Sigma^-$. Therefore the two flows started at $S^+$ and $S^-$, respectively cannot intersect until their return to $S$.

By the definition of the model, the return times from $\u\in S^*$ to $S$ are given by 
\begin{equation}\label{eqreturntime}
\tau(\u):=\tau_{\Sigma}(\u)+\frac{3}{2}\pi = \frac{1}{\lambda_1} \log(1/|u_1|) + \frac{3}{2}\pi. 
\end{equation}
The \emph{semigroup} of the dynamics of the geometric model, $\Psi^{\mathrm{geo}}_t(\u)$, consists of repeated compositions of the semigroup $\Psi^{\mathrm{lin}}$ from $S^*$ to $\Sigma$ and $\Psi^{\mathrm{rot}}$ from $\Sigma$ back to $S$. 

The \emph{state space} where $\Psi^{\mathrm{geo}}_t(\u)$ is defined is denoted by $\Lambdageo$, which consists of the union of the points of all of the paths started from $S$ and evolved according to the geometric model until their first return to $S$ (the paths started from points on $\Gamma$ do not return to $S$, but the points on them are included in $\Lambdageo$ nevertheless).

The dynamics $\Psi^{\mathrm{geo}}_t(\u)$ defines a return map $P(\u)$ from $S^*$ to $S$. An important property of the return map $P(\u)$ is that two points that were equal in $u_1$ coordinate stay equal in $u_1$ coordinate even after their return. Thus the $u_1$ coordinate of $P(\u)$ only depends on $u_1$, and thus we can write
\begin{equation}\label{Peq}
P(\u):=(f(u_1),g(u_1,u_2)),
\end{equation}
where $f: [-1/2,1/2]\setminus \{0\}\to [-1/2,1/2]$  is defined as
\begin{equation}\label{feq}
f(u_1):=\Bigg\{\begin{matrix}\theta |u_1|^{\alpha}-\frac{1}{2} &\text{ if }u_1>0,\\
-\theta |u_1|^{\alpha}+\frac{1}{2}& \text{ if }u_1<0,
\end{matrix}
\end{equation}
and $g: ([-1/2,1/2]\setminus \{0\})\times [-1/2,1/2]\to [-1/2,1/2]$ is defined as
\begin{equation}\label{geq}
g(u_1,u_2):=\Bigg\{\begin{matrix} u_2 |u_1|^{\beta}-\frac{1}{4} &\text{ if }u_1>0,\\
u_2 |u_1|^{\beta}+\frac{1}{4} &\text{ if }u_1<0.
\end{matrix}
\end{equation}

Figure \ref{geometric:sub3} displays $f$. Based on Assumption \ref{assalphabetatheta}, one can see that this function satisfies $|f'(u_1)|>\sqrt{2}$ on $[-1/2,1/2]\setminus \{0\}$. This means that the dynamics are expanding in the direction $u_1$ and this causes the high sensitivity to initial conditions. The following result summarises some important statistical properties of the map $f$.
\begin{prop}[Proposition 2.2 of \cite{Lorenzlikeflows}]\label{Prop22}
The one-dimensional map $f$ admits a unique invariant probability distribution $\mu_f$ on $[-1/2,1/2]$ that is absolutely continuous with respect to the Lebesgue measure $m$ on the interval, it is ergodic and so in particular it is a physical measure for the map. Moreover, $\frac{d \mu_f}{d m}$ is of bounded variation, in particular, it is bounded.
\end{prop}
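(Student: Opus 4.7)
The plan is to apply the Lasota--Yorke theorem for piecewise expanding maps of the interval. First I would verify the hypotheses on $f$ from \eqref{feq}. On each of the two open branches $(-1/2, 0)$ and $(0, 1/2)$ the map is $C^{\infty}$ with $|f'(u_1)| = \alpha \theta |u_1|^{\alpha-1}$; because $\alpha < 1$ this derivative is decreasing in $|u_1|$ and attains its infimum at $|u_1| = 1/2$, giving
\begin{equation*}
\inf_{u_1 \neq 0} |f'(u_1)| = \alpha \theta \cdot 2^{1-\alpha} > \sqrt{2}
\end{equation*}
by Assumption \ref{assalphabetatheta}(3). Thus $f$ is a uniformly expanding piecewise smooth interval map (with an integrable blow-up of $|f'|$ as $u_1 \to 0$, which is harmless).

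With the setup in place, I would invoke the Lasota--Yorke inequality for the Perron--Frobenius transfer operator $\LL_f$ acting on $\mathrm{BV}[-1/2, 1/2]$. Since $|(f^n)'| \geq 2^{n/2}$ by the chain rule, choosing $n$ large enough one obtains an estimate of the form
\begin{equation*}
\mathrm{Var}(\LL_f^n h) \leq \lambda \, \mathrm{Var}(h) + C \, \|h\|_{L^1}, \qquad h \in \mathrm{BV}[-1/2,1/2],
\end{equation*}
with $\lambda \in (0,1)$. By the Ionescu-Tulcea--Marinescu / Hennion theorem this yields quasi-compactness of $\LL_f$ on $\mathrm{BV}$, and in particular a non-negative fixed point $h^{*} \in \mathrm{BV}$ of $\LL_f$; normalising to $\int h^{*}\, dm = 1$ defines the density of an absolutely continuous invariant probability measure $\mu_f$. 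Since bounded variation functions on an interval are bounded, this density is automatically bounded.

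For uniqueness and ergodicity I would use the standard consequence of quasi-compactness that $\LL_f$ admits only finitely many ergodic absolutely continuous invariant components, each supported on a finite union of intervals, and then rule out any non-trivial decomposition by showing topological transitivity of $f$ on $[-1/2,1/2]$. Concretely, any subinterval $J$ lying in one branch is stretched by a factor at least $\sqrt{2}$ under $f$; iterating, $J$ must eventually straddle the singularity at $0$, and one further application of $f$ produces two subintervals reaching arbitrarily close to $\pm 1/2$, which seeds coverage of the entire interval by subsequent iterates. Ergodicity of $\mu_f$ then follows from the impossibility of splitting $[-1/2, 1/2]$ into disjoint forward-invariant sets of positive Lebesgue measure, giving uniqueness of the acim as a bonus.

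The main obstacle is making this covering argument rigorous: because Assumption \ref{assalphabetatheta}(3) imposes $\theta < 2^{\alpha}$, the branches are not full (one checks $|f(\pm 1/2)| < 1/2$), so $f$ is not Markov in the standard sense and the natural symbolic dynamics is non-trivial. The cleanest route is to appeal to the Keller--Rychlik theory of piecewise monotone $C^2$ expanding maps, which provides the finite ergodic decomposition together with characterisations of the supports that reduce uniqueness to the kind of interval-covering statement sketched above. The careful tracking of how $f^n((-1/2,0))$ and $f^n((0,1/2))$ propagate, while elementary in principle, is the technical heart of the argument.
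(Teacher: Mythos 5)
The paper does not prove this statement at all: it is imported verbatim as Proposition 2.2 of the cited reference on Lorenz-like flows, so there is no internal proof to compare against. Your sketch is, in substance, the standard argument by which that reference (and the broader literature) establishes the result, and the parts you carry out are correct: the derivative computation $|f'(u_1)|=\alpha\theta|u_1|^{\alpha-1}\ge \alpha\theta 2^{1-\alpha}>\sqrt{2}$ is exactly equivalent to Assumption 2.1(3); the blow-up of $f'$ at the discontinuity is indeed harmless because what the Lasota--Yorke/Rychlik machinery needs is that $1/|f'|\propto |u_1|^{1-\alpha}$ be bounded by $1/\sqrt{2}<1$ and of bounded variation on each branch, both of which hold; and quasi-compactness of the transfer operator on $\mathrm{BV}$ then gives an invariant density of bounded variation, hence bounded.

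The one place your argument is genuinely incomplete is the step you yourself flag: ruling out a nontrivial ergodic decomposition when the branches are not full. It is worth noting that the gap closes using the \emph{lower} bound in Assumption 2.1(3) as well: since $\theta>2^{\alpha}/(\sqrt{2}\alpha)$ and $\alpha<1$, one has $\theta 2^{-\alpha}>1/\sqrt{2}$, so $f((0,1/2))=(-1/2,\,\theta 2^{-\alpha}-1/2)$ and $f((-1/2,0))=(1/2-\theta 2^{-\alpha},\,1/2)$ each contain a neighbourhood of the singularity $0$. Consequently, once an iterate of an interval $J$ straddles $0$ (forced in finitely many steps by the expansion factor $\sqrt{2}$), its image contains one-sided neighbourhoods of both endpoints $\pm 1/2$, whose images in turn again straddle $0$ with definite length $\theta 2^{-\alpha}-1/2>1/\sqrt{2}-1/2$; two more iterations then cover the whole trapping interval. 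This is the "locally eventually onto" property, and with it the finite ergodic decomposition from Keller--Rychlik collapses to a single component, giving uniqueness and ergodicity. With that covering lemma written out, your proof is complete and is essentially the proof the cited source gives.
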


Now we are going to state two more useful properties of the geometric model.
Firstly, based on equations \eqref{eqflow} and \eqref{SigmaptoSeveq}, one can show that the speed of the dynamics $\|\frac{d\u}{dt}\|$ at any $\u\in \Lambdageo$ is bounded by
\begin{equation}\label{vmaxgeodefeq}\vmaxgeo:=4+\lambda_1+\lambda_2+\lambda_3.
\end{equation}
For $s\ge 0$, we let
\begin{equation}\label{WSsdefeq}
W^{S}_{s}:=\{\w\in \Lambdageo: w_1 \in [-1,1], w_2 \in [-1,1], w_3\in [1-s, 1+s]\},
\end{equation}
this is the region of points in $\Lambdageo$ not further away than $s$ from the plane $S$. Based on equations \eqref{eqflow} and \eqref{SigmaptoSeveq}, it is possible to show that for any $\u\in W^{S}_{0.1}\cap \Lambdageo$, the dynamics of the geometric model satisfies that 
\begin{equation}\label{vmingeodefeq}
\frac{d u_3}{d t} \le -\vmingeo\quad \text{ for } \quad \vmingeo:=\min\left(\frac{1}{4}, 0.9 \lambda_3 \right).
\end{equation}

\subsection{Lower bounds for the smoother of the geometric model}\label{seclowerboundsgeometricsmoother}
In this section, we give some lower bounds for the smoothing distribution of the geometric model of the Lorenz equations. First, we show the existence of the so-called \emph{leaf sets}, a rather surprising property of the dynamics of the geometric model.
\begin{theorem}\label{thmlowerboundsmoother1}
For Lebesgue almost every point $\u\in \Lambdageo$, there exists a continuous curve $U(\u)\subset \Lambdageo$ called a \emph{leaf set} such that $\{\|\v-\u\|_{\infty}: \v \in U(\u)\}=[0,d_{\max}(\u)]$ for some constant $d_{\max}(\u)>0$, and for any $\v\in U(\u)$, any $t>0$, 
\begin{equation}\label{eqpathdistance}\|\v(t)-\u(t)\|_{\infty}\le C_g \|\v-\u\|_{\infty} \exp(-\lambda_g \cdot t),
\end{equation}
where $C_g>0$ and $\lambda_g>0$ are constants only depending on the parameters of the model. Moreover, we can choose 
\begin{equation}\label{eqUudef}U(\u):=\left\{\v\in \Lambdageo: v_1=u_1, v_3=u_3, |v_2-u_2|<\frac{1}{3}-\frac{1}{2^\beta}\right\}.
\end{equation}
\end{theorem}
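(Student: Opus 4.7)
The leaf set $U(\u)$ is a line segment in the $u_2$-direction; I will prove the theorem by (i) showing this segment has positive $u_2$-diameter for Lebesgue-almost-every $\u\in\Lambdageo$, and (ii) establishing the exponential contraction estimate \eqref{eqpathdistance} via a cycle-by-cycle analysis.

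The decisive observation, to be verified by inspection of \eqref{eqflow}, \eqref{Ldefeq}, \eqref{SigmaptoSeveq}--\eqref{SigmamtoSeveq}, and the return map \eqref{feq}, is that the dynamics of the geometric model preserves a $u_2$-foliation: the coordinates $u_1(t)$ and $u_3(t)$ along any trajectory depend on the initial point only through $u_{0,1}$ and $u_{0,3}$, not through $u_{0,2}$. Consequently, varying $u_{0,2}\in[-1/2,1/2]$ while fixing $u_{0,1},u_{0,3}$ sweeps out, at any time $t$, a line segment of points sharing the same $(u_1(t),u_3(t))$ as $\u(t)$. Intersecting this segment with $|v_2-u_2|<1/3-1/2^\beta$ gives $U(\u)$; for $\u$ in the interior of this slice of $\Lambdageo$, which is a Lebesgue-full subset, the intersection has positive $u_2$-length, establishing (i) and identifying $U(\u)$ with \eqref{eqUudef}.

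For (ii), fix $\v\in U(\u)$. By the foliation property $v_1(t)=u_1(t)$ and $v_3(t)=u_3(t)$ for all $t$, so $\|\v(t)-\u(t)\|_{\infty} = |v_2(t)-u_2(t)|$. During each linear phase \eqref{eqflow} this gap contracts as $e^{-\lambda_2 t}$, while during each rotation phase \eqref{SigmaptoSeveq}--\eqref{SigmamtoSeveq} it is preserved because the rotation shifts $u_2$ by an offset independent of $u_{0,2}$. Over one complete cycle based at $S$ the gap is therefore multiplied by $|u_1|^\beta$. Writing $u_1^{(k)}$ for the $u_1$-coordinate at the $k$-th return to $S$ and $\tau(u_1):=\lambda_1^{-1}\log(1/|u_1|)+3\pi/2$ for the cycle length, the per-cycle contraction rate $\beta\log(1/|u_1^{(k)}|)/\tau(u_1^{(k)})$ is increasing in $\log(1/|u_1^{(k)}|)$; since $|u_1^{(k)}|\le 1/2$, it is bounded below by
\begin{equation*}
\lambda_g := \frac{\lambda_2\log 2}{\log 2+3\pi\lambda_1/2}>0.
\end{equation*}
Choosing $C_g:=e^{\lambda_g\cdot 3\pi}$ absorbs the temporary lack of contraction during rotation phases as well as any partial cycles at the endpoints, producing \eqref{eqpathdistance} uniformly in $t\ge 0$.

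\textbf{Main obstacle.} The principal difficulty will be the bookkeeping for the alternating contraction/no-contraction intervals: composing the explicit dynamics across arbitrarily many cycles, handling the partial cycle in which $\u$ may lie mid-linear-phase or mid-rotation at $t=0$, and extracting a clean exponential bound uniformly in $t$ rather than only at cycle endpoints. The measure-zero exceptional set will consist of trajectories that eventually encounter $\Gamma$ (where $u_1=0$); on such paths both $\u(t)$ and $\v(t)$ share the same $u_1$-history and collapse to the origin together, so \eqref{eqpathdistance} continues to hold trivially.
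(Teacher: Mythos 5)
Your proposal is correct and follows essentially the same route as the paper: the $u_2$-foliation property giving $v_1(t)=u_1(t)$, $v_3(t)=u_3(t)$, contraction by $e^{-\lambda_2 t}$ on the linear phases (lasting at least $\lambda_1^{-1}\log 2$), preservation of the $u_2$-gap on the rotation phases, and the identical worst-case rate $\lambda_g=\lambda_2\log 2/(\log 2+\tfrac{3}{2}\pi\lambda_1)$ with a harmless slightly larger $C_g$. The only point to make explicit is that your ``decisive observation'' holds on $\Lambdageo$ precisely \emph{because} of the restriction $|v_2-u_2|<\tfrac13-\tfrac{1}{2^\beta}$: during the final part of the rotation the tubes issuing from $\Sigma^+$ and $\Sigma^-$ can overlap in the $(u_1,u_3)$ projection, and this bound (which is the minimal $u_2$-separation of the two tubes there) is what guarantees $\u$ and $\v$ always lie on the same branch, as the paper's proof spells out.
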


Thus the leaf set $U(\u)$ satisfies that for any $\v\in U(\u)$, the distance between the paths $\u(t)$ and $\v(t)$ decreases rapidly in $t$. This is a rather unusual property since in general two paths started from nearby points diverge quickly. Using this, we obtain our lower bound for the smoother.

\begin{theorem}\label{thmlowerboundsmoother2}
Suppose that we observe the geometric model started at $\u$ at time points $t_i=ih$ for $i=0,1,\ldots$ with observation matrix $\mtx{H}$, and that the observation errors are uniformly distributed on $[-\epsilon, \epsilon]^{d_o}$. Suppose that the prior $q$ satisfies that $q(\v)>0$ for every $\v\in \Lambdageo$. Then for Lebesgue almost every initial point $\u\in \Lambda$, for $\epsilon\cdot h$ sufficiently small, the smoothing distribution given the observations up to time $t_k$ for any $k\in \N$ satisfies that the expected diameter of its support is at least $\frac{c_{\mathrm{sm}} h \epsilon}{\|\mtx{H}\|_{\infty}}$ for some constant $c_{\mathrm{sm}}>0$ only depending on the parameters of the model.
\end{theorem}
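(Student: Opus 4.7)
The strategy is to exploit the exponentially contracting leaf set $U(\u)$ furnished by Theorem~\ref{thmlowerboundsmoother1}: along $U(\u)$ nearby trajectories collapse onto $\u(t)$ at rate $e^{-\lambda_g t}$, so points on $U(\u)$ close to $\u$ produce observations essentially indistinguishable from those of $\u$ at every observation time. For Lebesgue almost every $\u$, I pick $\v = \v_\delta \in U(\u)$ with $\|\v-\u\|_\infty = \delta$, where $\delta > 0$ is a parameter to be tuned (possible for $\delta \le d_{\max}(\u)$ by continuity of the leaf curve). Since $\vct{Y}_i - \mtx{H}\u(t_i) = \vct{Z}_i \in [-\epsilon,\epsilon]^{d_o}$ holds with probability one, $\u$ itself always lies in $\supp(\musm)$; consequently $\diam(\supp(\musm)) \ge \|\v-\u\|_\infty = \delta$ whenever $\v \in \supp(\musm)$.

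The next step is to lower bound $\PP(\v \in \supp(\musm))$. The event is the conjunction over $i=0,\ldots,k$ of $|\vct{Z}_i + \mtx{H}(\u(t_i)-\v(t_i))|_\infty \le \epsilon$. Writing $a_i := \|\mtx{H}\|_\infty C_g \delta\, e^{-\lambda_g i h}$, Theorem~\ref{thmlowerboundsmoother1} gives $\|\mtx{H}(\u(t_i)-\v(t_i))\|_\infty \le a_i$. Assuming $a_i \le \epsilon$, the uniform distribution of $\vct{Z}_i$ on $[-\epsilon,\epsilon]^{d_o}$ with i.i.d.\ coordinates gives a per-step probability at least $(1 - a_i/(2\epsilon))^{d_o}$. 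Using independence of the $\vct{Z}_i$ together with $\log(1-x) \ge -2x$ for $x \in [0,1/2]$,
\begin{equation*}
\log \PP(\v \in \supp(\musm)) \ge -\frac{d_o}{\epsilon}\sum_{i=0}^{k} a_i \ge -\frac{d_o \|\mtx{H}\|_\infty C_g \delta}{\epsilon (1-e^{-\lambda_g h})} \ge -\frac{2 d_o C_g \|\mtx{H}\|_\infty \delta}{\epsilon\, \lambda_g h},
\end{equation*}
where the final inequality uses $1 - e^{-x} \ge x/2$ for $x \le 1$.

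Choosing $\delta := c_0\,\epsilon h/\|\mtx{H}\|_\infty$ with $c_0 := \lambda_g \log 2 / (2 d_o C_g)$ makes the right-hand side at least $-\log 2$, so $\PP(\v \in \supp(\musm)) \ge 1/2$. Therefore $\E[\diam(\supp(\musm))] \ge \delta \cdot \PP(\v \in \supp(\musm)) \ge \delta/2 = c_{\mathrm{sm}}\, h\epsilon/\|\mtx{H}\|_\infty$ with $c_{\mathrm{sm}} := c_0/2$, which depends only on model parameters. The hypothesis that $\epsilon h$ is sufficiently small is what guarantees $\delta \le d_{\max}(\u)$, $a_0 \le \epsilon$, and $\lambda_g h \le 1$ in the above estimates. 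The crucial, and conceptually striking, feature is that $\sum_{i=0}^{\infty} e^{-\lambda_g i h}$ is finite independently of $k$: this is exactly what makes the lower bound uniform in the number of observations, and it is the direct payoff of contraction along $U(\u)$. The main technical obstacle is thus entirely absorbed into the leaf-set estimate, i.e.\ into Theorem~\ref{thmlowerboundsmoother1} itself; once that is in hand, the remaining argument is a short probabilistic calculation.
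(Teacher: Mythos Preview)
Your argument is correct and follows essentially the same route as the paper: pick $\v\in U(\u)$ at distance $\delta\propto \epsilon h$, use the contraction estimate \eqref{eqpathdistance} to bound the chance that any observation excludes $\v$, sum the resulting geometric series, and tune $\delta$ so that $\v$ survives with probability at least $1/2$. The only difference is cosmetic---the paper uses a union bound $\PP(\v\notin S_k)\le \tfrac{d_o}{2\epsilon}\sum_i a_i$ in place of your product bound with $\log(1-x)\ge -2x$; this sidesteps the auxiliary hypotheses $a_0\le\epsilon$ and $\lambda_g h\le 1$, which, contrary to your final sentence, are constraints on $h$ alone and are \emph{not} implied by $\epsilon h$ being small (the fix is immediate: either switch to the union bound, or stop at the middle expression $\tfrac{d_o\|\mtx{H}\|_\infty C_g\delta}{\epsilon(1-e^{-\lambda_g h})}$ and absorb $1-e^{-\lambda_g h}$ into the constant).
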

\begin{rem}
To make the argument transparent, we only consider the uniform case here, but the result could be easily generalised to other observation error distributions with bounded support. \end{rem}

\begin{proof}[Proof of Theorem \ref{thmlowerboundsmoother1}]
Let $U(\u)$ be as in \eqref{eqUudef}. Then $\v\in U(\u)$ and $\u$ can only differ in the second coordinate. Using the condition that $|v_2-u_2|<\frac{1}{3}-\frac{1}{2^{\beta}}$, it follows that it cannot happen that $\v$ is an element of the flow from $\Sigma_+$ to $S$ while $\u$ is an element of the flow from $\Sigma_-$ to $S$, or vice-versa (since the two flows are at least $\frac{1}{3}-\frac{1}{2^{\beta}}$ away in the second coordinate in the region above $S$). Using this fact, and the definition of the dynamics, we can see that the second coordinate does not influence the evolution of the first and third coordinates, thus $v_1(t)=u_1(t)$ and $v_3(t)=u_3(t)$ for every $t\ge 0$. Now from \eqref{eqflow}, it follows that the difference in the second coordinate decreases at a rate $e^{-\lambda_2 t}$ during the flow from $S$ to $\Sigma$. Moreover, the time it takes to get from $S^*$ to $\Sigma$ is at least $\frac{1}{\lambda_1}\log(2)$. After this period of contraction, we can see that the dynamics keeps $\|\u(t)-\v(t)\|_{\infty}$ constant during the phase from $\Sigma$ back to $S$, which takes $\frac{3}{2}\pi$ time. By combining these facts, the result follows with constants 
$\lambda_g:=\lambda_2\cdot \frac{\log(2)/\lambda_1}{\log(2)/\lambda_1+(3/2)\pi}$ and $C_g:=\exp\left(\frac{3}{2}\pi \lambda_g\right)$.
\end{proof}

\begin{proof}[Proof of Theorem \ref{thmlowerboundsmoother2}]
Suppose that we observe a point $\vct{x}\in \R^d$ with observation error that is uniform in $[-\epsilon,\epsilon]^{d_o}$, and obtain an observation $\vct{Y}$. Then for any $\vct{z}\in \R^{d_o}$ with $\|\vct{z}-\vct{x}\|_{\infty}< \epsilon$, we have
\begin{equation}\label{equniformobs}\PP(\|\vct{z}-\vct{Y}\|_{\infty}>\epsilon)\le \PP(\|\vct{x}-\vct{Y}\|_{\infty}>\epsilon-\|\vct{z}-\vct{x}\|_{\infty})\le \frac{d_o\|\vct{z}-\vct{x}\|_{\infty}}{2\epsilon}.
\end{equation}
Outside of this event, $\vct{z}$ is still within the support of the posterior distribution. Based on this observation, and inequality \eqref{eqpathdistance}, we can see that the probability that a point $\v\in U(\u)$ is not in the support of the smoothing distribution with observations taken into account until time $t$ is bounded by
\[\sum_{i=1}^{j}\|\mtx{H}\|_{\infty}C_g \frac{d_o\|\v-\u\|_{\infty} \exp(-\lambda_g i h)}{2\epsilon}\le \|\v-\u\|_{\infty}\cdot \frac{d_o C_g \|\mtx{H}\|_{\infty}}{2\lambda_g h \epsilon}.\]
Thus the probability that a point $\v\in U(\u)$ is in the support of the smoothing distribution given any amount of observations is at least $\frac{1}{2}$ if $\|\v-\u\|_{\infty}\le \frac{\epsilon \lambda_g h}{d_o \|\mtx{H}\|_{\infty} C_g}$. Let $m(\u):=\sup_{v\in U(\u)}\|\v-\u\|_{\infty}$. Then $m(\u)>0$ for Lebesgue almost every $\u\in \Lambda$, and assuming that $\epsilon h\le \frac{d_o \|\mtx{H}\|_{\infty} C_g m(\u)}{\lambda_g}$, there is a $\v^*\in U(\u)$ such that $\|\v^*-\u\|_{\infty}=\frac{\epsilon h \lambda_g }{d_o \|\mtx{H}\|_{\infty} C_g}$. Since $\u$ is in the support of the smoother, and $\v^*$ is included with probability at least $\frac{1}{2}$, therefore the expected diameter of the smoother is at least $\frac{c_{\mathrm{sm}}\epsilon h }{\|\mtx{H}\|_{\infty}}$, where $c_{\mathrm{sm}}:= \frac{ \lambda_g }{2 d_o C_g}$.
\end{proof}

\subsection{Lower bounds for the filter of the geometric model}\label{seclowerboundsgeometricfilter}

Our first theorem in this section shows the existence of the so-called \emph{anti-leaf sets}. For any $\v\in \Lambdageo$, we call
\begin{equation}\label{originofveq} O(\v)=(O_1(\v),O_2(\v),O_3(\v))\in S
\end{equation} the \emph{origin} of $\v$ on $S$. This is defined as the unique point in $S$ such that if we start the the geometric model (as defined in Section \ref{secLorenzIntro}) from $O(\v)$, its path will cross $\v$ before returning to $S$. The time taken to reach $\v$ from $O(\v)$ is denoted by $\tau_O(\v)$. For any $\u, \v\in \Lambdageo$, $t\ge 0$, we let 
$\u(t):=\Psi_{t}^{\mathrm{geo}}(\u)$ and $\v(t):=\Psi_{t}^{\mathrm{geo}}(\v)$  (it is evolved according to the geometric model for time $t$).
\begin{theorem}\label{thmgeoantileafsets}Then there is an absolute constant $h_{\max}>0$ such that for $h\le h_{\max}$, for $\mu_f$-almost every $x\in [-1/2,1/2]$ ($\mu_f$ was defined in Proposition \ref{Prop22}), for every $\u\in \Lambdageo$ with $O_1(\u)=x$, for every $k\in \N$, there exists a sequence of continuous curves $\tU(\u,k)\subset\Lambdageo$ (called \emph{anti-leaf sets}) and constants $\tilde{d}_{\max}(\u,k)\ge 0$, $C_{\tU}(\u,k)<\infty$ such that 
\begin{enumerate}
\item $\{\|\v(t_k)-\u(t_k)\|_{\infty}: \v\in \tU(\u,k)\}=[0, \tilde{d}_{\max}(\u,k)]$, 
\item $\sum_{i=0}^k \|\v(t_i)-\u(t_i)\|_{\infty}\le C_{\tU}(\u,k)\|\v(t_k)-\u(t_k)\|_{\infty}$, \text{ and }
\item there is an infinite sequence of indices $i_1(\u), i_2(\u), \ldots $ such that for any $j\ge 1$, $C_{\tU}(\u,i_j(\u))\le C_{\tU}(\u)$ and $\tilde{d}_{\max}(\u,i_j(\u))\ge \tilde{d}_{\max}(\u)$, where
$\tilde{d}_{\max}(\u)>0$ and $C_{\tU}(\u)<\infty$ are some constants that are independent of $j$.
\end{enumerate}
\end{theorem}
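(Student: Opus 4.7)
My plan is to construct $\tU(\u,k)$ as the image under the geometric flow of a short arc of the local unstable manifold through the most recent return on $S$. Set $\xi_j:=P^j(O(\u))$, so that $x_j:=(\xi_j)_1=f^j(x)$, and let $s_j$ denote the time at which $\u(\cdot)$ crosses $\xi_j$. Let $j_k$ be the largest index with $s_{j_k}\le t_k$, and write $\sigma_k:=t_k-s_{j_k}\in[0,\tau(\xi_{j_k}))$. At $\xi_{j_k}$, the Jacobian of $P(x,y)=(f(x),g(x,y))$ is lower triangular with expanding eigenvalue $f'(x_{j_k})$ of magnitude $>\sqrt{2}$ and contracting eigenvalue $|x_{j_k}|^\beta<1/2$; hence a smooth local unstable manifold $W^u_{\mathrm{loc}}(\xi_{j_k})\subset S$ exists tangent to the expanding eigendirection. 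Parametrize an initial arc $\tilde x\mapsto\eta_{j_k}(\tilde x)\in W^u_{\mathrm{loc}}(\xi_{j_k})$ by arclength $\tilde x\in[0,\delta_k]$, define $\psi_k(\tilde x):=\Psi^{\mathrm{geo}}_{\sigma_k}(\eta_{j_k}(\tilde x))$ and $\tU(\u,k):=\{\psi_k(\tilde x):\tilde x\in[0,\delta_k]\}$. For each $\v=\psi_k(\tilde x)$, the backward trajectory is obtained by pulling back via $P^{-1}$, giving $\eta_j(\tilde x)\in W^u_{\mathrm{loc}}(\xi_j)$ for $j<j_k$, and interpolating by the flow between successive returns.

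Property~(1) follows from continuous dependence of $\Psi^{\mathrm{geo}}$ on initial conditions together with the existence of a positive-length arc in $W^u_{\mathrm{loc}}(\xi_{j_k})$. For property~(2), the key estimate is geometric backward contraction along $W^u$: writing $E_j:=\|\eta_j(\tilde x)-\xi_j\|_\infty$, the restriction of $P^{-1}$ to $W^u_{\mathrm{loc}}$ is conjugate (via projection onto the first coordinate) to $f^{-1}$, yielding $E_{j-1}\le (1+o(1))(1/\sqrt{2})\,E_j$. Within tube $j$ the linear phase inflates the first-coordinate perturbation up to $E_j/|x_j|$ at $\Sigma$, the rotational phase rescales by a bounded factor, and the number of observation times $t_i$ inside the tube is at most $\tau(\xi_j)/h+1$. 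Summing these per-tube contributions yields a geometric series bounded by $C_{\tU}(\u,k)\cdot E_{j_k}\le C_{\tU}(\u,k)\cdot\|\v(t_k)-\u(t_k)\|_\infty$ (the latter inequality using that $\Psi^{\mathrm{geo}}_{\sigma_k}$ has bounded inverse on the short arc); the hypothesis $h\le h_{\max}$ is imposed so the tube-count factor stays controlled.

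For property~(3) I would use ergodicity of $\mu_f$ (Proposition~\ref{Prop22}): for any $\delta>0$, the set $G_\delta:=\{x:\delta\le|x|\le 1/2\}$ has positive $\mu_f$-measure since $\mu_f$ is absolutely continuous with bounded density, so by Birkhoff's theorem, for $\mu_f$-a.e.\ $x$ infinitely many $j$ satisfy $x_j\in G_\delta$. Take $i_l(\u)$ to be an observation index $k$ falling in the tube $[s_{j_l},s_{j_l+1})$ for the $l$-th such return $j_l$, chosen so that $\sigma_{i_l}$ is also bounded away from $\tau(\xi_{j_l})$. On this subsequence $|f'(x_{j_{i_l}})|$, $\tau(\xi_{j_{i_l}})$, the slope of $W^u_{\mathrm{loc}}(\xi_{j_{i_l}})$, and the expansion of $\Psi^{\mathrm{geo}}_{\sigma_{i_l}}$ are all uniformly bounded in terms of $\delta$ alone, yielding the required uniform $\tilde d_{\max}(\u)>0$ and $C_{\tU}(\u)<\infty$. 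The main obstacle is treating returns with $|x_j|\approx 0$: although the contraction $|f'(x_j)|^{-1}=O(|x_j|^{1-\alpha})$ dominates the tube-count inflation $\tau(\xi_j)/h=O(\log(1/|x_j|))$ pointwise, verifying that the geometric decay in~(2) telescopes cleanly across long runs of small-$|x_j|$ returns requires the $\mu_f$-a.e.\ hypothesis (together with $\int \log(1/|x|)\,d\mu_f(x)<\infty$, which follows from the boundedness of the $\mu_f$-density) to control the cumulative effect.
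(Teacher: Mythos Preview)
Your overall strategy---build the anti-leaf set along the expanding direction and control the accumulated distance by geometric backward contraction, then invoke ergodicity of $\mu_f$---is essentially the paper's strategy too. Two remarks on the construction: the paper works with an explicit $u_1$-segment $\{v_1\in I^{(l)}_{O_1(\u)},\,v_2=O_2(\u)\}$ placed at the \emph{origin} $O(\u)$ and propagated forward, rather than with the abstract local unstable manifold at the \emph{last} return $\xi_{j_k}$. This sidesteps having to justify existence and uniform geometry of $W^u_{\mathrm{loc}}$ at non-attractor points, and makes all the per-tube estimates (your $E_j/|x_j|$ inflation, the time-delay, etc.) completely explicit via the formulas for $\Psi^{\mathrm{lin}}$ and $\Psi^{\mathrm{rot}}$.

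The genuine gap is in your argument for property~(3). You claim that on the subsequence where $|x_{j_k}|\ge\delta$ the constant $C_{\tU}(\u,k)$ is uniformly bounded ``in terms of $\delta$ alone''. This is not true: your own per-tube estimate shows that $C_{\tU}(\u,k)$ is a weighted sum over \emph{all} previous returns, of the order
\[
C_{\tU}(\u,k)\;\asymp\;\frac{1}{h}\sum_{j=0}^{j_k}\frac{\tau(\xi_j)}{|x_j|}\,2^{-(j_k-j)/2}
\;\asymp\;\frac{1}{h}\sum_{j=0}^{j_k}\frac{\log(1/|x_j|)}{|x_j|}\,2^{-(j_k-j)/2},
\]
and a single very small $|x_j|$ with $j$ close to $j_k$ makes this large regardless of $|x_{j_k}|$. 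You correctly flag this as ``the main obstacle'', but your proposed fix---integrability of $\log(1/|x|)$ with respect to $\mu_f$---is the wrong condition: the summand you need to control is $\log(1/|x|)/|x|$ (or, after the paper's cruder bound $\tau(\xi_j)\lesssim 1/|x_j|$, even $1/|x_j|^2$), and these are \emph{not} $\mu_f$-integrable, so Birkhoff's theorem does not apply directly.

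The paper resolves this with a power-reduction trick. It packages the history into
\[
D_l(x)\;:=\;\sum_{i=0}^{l}\frac{2^{-(l-i)/4}}{\bigl(d(f^{(i)}(x))\bigr)^{2}},
\]
and shows $C_{\tU}(\u,k)\le (C^{\mathrm{sum}}/h)\,D_l(O_1(\u))$ and $\tilde d_{\max}(\u,k)\gtrsim 1/D_l(O_1(\u))$. Although $d(\cdot)^{-2}$ is not integrable, taking fourth roots gives
\[
D_l(x)^{1/4}\;\le\;\sum_{i=0}^{l}\frac{2^{-(l-i)/16}}{\bigl(d(f^{(i)}(x))\bigr)^{1/2}},
\]
and now $r(x):=d(x)^{-1/2}$ \emph{is} Lebesgue-integrable on $[-1/2,1/2]$. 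Summing in $l$, swapping the order, and using that $\sum_{m\ge i}2^{-(m-i)/16}$ is bounded yields
\[
\frac{1}{l+1}\sum_{m=0}^{l}D_m(x)^{1/4}\;\le\;\frac{C}{l+1}\sum_{i=0}^{l}r\bigl(f^{(i)}(x)\bigr)\;\longrightarrow\;C\,\mu_f(r)\quad(\mu_f\text{-a.e.})
\]
by Birkhoff. Hence $\liminf_l D_l(x)<\infty$ $\mu_f$-a.e., which gives the infinite subsequence $(l_j)$ on which both $C_{\tU}$ and $\tilde d_{\max}$ are uniformly controlled. Your sketch is missing exactly this step; without it, property~(3) does not follow.
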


The anti-leaf sets behave the opposite way to the leaf set considered in the previous section, because for $\v\in \tilde{U}(\u,k)$, the distance $\|\v(t)-\u(t)\|_{\infty}$ increases rapidly in $t$ for $0\le t\le t_k$. This is the typical behaviour of paths of a chaotic system started from nearby points, so their existence is not surprising. Nevertheless, the proof of Theorem \ref{thmgeoantileafsets} is quite technical, so we have included it in Section \ref{secgeoantileafproof} of the Appendix. The key idea is that we can exploit the expansion property of the one dimensional map $f$ by looking at the time evolution of a small line segment parallel to the axis $u_1$ passing through $O(\u)$.

Based on the existence of anti-leaf sets, the following theorem shows lower bounds for the diameter of the filtering distribution for the geometric model.
\begin{theorem}\label{thmgeomodelfilter}
Suppose that we observe the geometric model started at position $\u\in \Lambdageo$, with observation matrix $\mtx{H}$, at time points $t_i=ih$ for $i=0,1,\ldots$, with observation errors that are uniform on $[-\epsilon, \epsilon]^{d_{o}}$, and $h\le h_{\max}$ (defined as in Theorem \ref{thmgeoantileafsets}). Suppose that the prior $q$ satisfies that $q(\v)>0$ for every $\v\in \Lambdageo$. Then for $\mu_f$-almost every $x\in [-1/2,1/2]$, every $\u\in \Lambdageo$ with $O_1(\u)=x$, for every $j\ge 1$, any $0<\epsilon \le d_oC_{\tU}(\u)\tilde{d}_{\max}(\u)\left\|\mtx{H}\right\|_{\infty}$, the expected diameter of the support of the filter after observations up to time $t_{i_j(\u)}$ is larger than or equal to $\epsilon/(2 d_o C_{\tU}(\u) \left\|\mtx{H}\right\|_{\infty})$.
\end{theorem}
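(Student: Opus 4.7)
The plan is to run the same argument used in the proof of Theorem \ref{thmlowerboundsmoother2}, but with \emph{anti-leaf sets} from Theorem \ref{thmgeoantileafsets} taking the role of leaf sets, and with the filter playing the role of the smoother. The crucial observation is that there is a one-to-one correspondence between smoother and filter: by \eqref{eqmusm}--\eqref{eqmufi}, a point $\w \in \Psi_{t_k}^{\mr{geo}}(\Lambdageo)$ lies in the support of the filter after observations up to time $t_k$ if and only if its preimage $\v$ under the flow, i.e. the initial point with $\v(t_k)=\w$, lies in the support of the smoother, which for uniform noise means $\|\vct{Y}_i - \mtx{H}\v(t_i)\|_{\infty} \le \epsilon$ for every $i=0,\dots,k$. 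Hence parametrising $\w=\v(t_{i_j(\u)})$ with $\v$ ranging over $\tU(\u, i_j(\u))$ gives a continuous family of candidate filter-support points whose distances from $\u(t_{i_j(\u)})$ fill the whole interval $[0,\tilde d_{\max}(\u, i_j(\u))]$ by property 1 of Theorem \ref{thmgeoantileafsets}.

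For each such $\v$, write $\vct{Y}_i - \mtx{H}\v(t_i) = \vct{Z}_i + \mtx{H}(\u(t_i)-\v(t_i))$ and apply the uniform-noise estimate \eqref{equniformobs} with $\vct{x}=\mtx{H}\u(t_i)$, $\vct{z}=\mtx{H}\v(t_i)$ to each observation time. A union bound over $i=0,\dots,i_j(\u)$ yields
\[
\PP\big(\v\text{ excluded from the smoother support}\big)
\;\le\; \sum_{i=0}^{i_j(\u)} \frac{d_o \|\mtx{H}\|_{\infty}\|\v(t_i)-\u(t_i)\|_{\infty}}{2\epsilon}.
\]
Property 2 of Theorem \ref{thmgeoantileafsets} bounds the sum on the right by $\frac{d_o \|\mtx{H}\|_{\infty}\,C_{\tU}(\u, i_j(\u))\,\|\v(t_{i_j(\u)})-\u(t_{i_j(\u)})\|_{\infty}}{2\epsilon}$, and property 3 replaces $C_{\tU}(\u,i_j(\u))$ by the uniform constant $C_{\tU}(\u)$.

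Now fix $\v^{*}\in \tU(\u,i_j(\u))$ with $\|\v^{*}(t_{i_j(\u)})-\u(t_{i_j(\u)})\|_{\infty} = \frac{\epsilon}{d_o \,C_{\tU}(\u)\|\mtx{H}\|_{\infty}}$. This choice is admissible because the standing hypothesis $\epsilon \le d_o\,C_{\tU}(\u)\tilde d_{\max}(\u)\|\mtx{H}\|_{\infty}$ together with property 3 ensures the target distance lies in $[0,\tilde d_{\max}(\u,i_j(\u))]$. By the two bounds above, $\v^{*}$ is not excluded with probability at least $1/2$, and $\u$ itself is always in the smoother support. Pushing forward by $\Psi^{\mr{geo}}_{t_{i_j(\u)}}$, both $\u(t_{i_j(\u)})$ and $\v^{*}(t_{i_j(\u)})$ lie in the filter support with probability at least $1/2$, so the expected diameter of the filter support is at least
\[
\tfrac{1}{2}\cdot \|\v^{*}(t_{i_j(\u)})-\u(t_{i_j(\u)})\|_{\infty}
\;=\; \frac{\epsilon}{2 d_o \,C_{\tU}(\u)\,\|\mtx{H}\|_{\infty}},
\]
which is the claimed bound.

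The only real difficulty in the argument is establishing the anti-leaf sets with the uniform telescoping constant $C_{\tU}(\u)$ along an infinite subsequence, but that is precisely the content of Theorem \ref{thmgeoantileafsets} (proved separately in the Appendix). Granted that structural input, the present proof is a direct union-bound computation analogous to the smoother case; no additional chaotic-dynamics input is required.
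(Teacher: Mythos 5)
Your proof is correct and follows essentially the same route as the paper's: choose $\v^{*}\in\tU(\u,i_j(\u))$ with $\|\v^{*}(t_{i_j(\u)})-\u(t_{i_j(\u)})\|_{\infty}=\epsilon/(d_o C_{\tU}(\u)\|\mtx{H}\|_{\infty})$, bound the exclusion probability by $1/2$ via \eqref{equniformobs}, the union bound, and properties 2--3 of Theorem \ref{thmgeoantileafsets}, then conclude from the fact that $\u(t_{i_j(\u)})$ is always in the support. Your additional remarks (the filter/smoother support correspondence and the admissibility check for $\v^{*}$) only make explicit what the paper leaves implicit.
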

Thus the theorem states that for infinitely observation times $t_{i_1(\u)},  t_{i_2(\u)},\ldots$, the expected diameter of the support of the filter is lower bounded by a constant times the standard deviation of the noise, and thus it does not tend to a Dirac-$\delta$ around the current position. Note that this result is weaker than our lower bound for the smoother (Theorem \ref{thmlowerboundsmoother2}) in the sense that it only holds at some specific time points and not for every $t_k$. Indeed, for the geometric model, the path $\u(t)$ can approach the origin $(0,0,0)$ infinitely often, and its speed $\|\frac{d}{dt}\u(t)\|$ can get arbitrarily slow in the neighbourhood of the origin. At such positions, the filtering distribution can get highly concentrated, since we have many independent observations about positions that are very close to the current position. Therefore one cannot expect a time uniform lower bound of the same form as for the smoother.
\begin{proof}[Proof of Theorem \ref{thmgeomodelfilter}]
Using the condition that $\epsilon \le  d_o C_{\tU}(\u)\tilde{d}_{\max}(\u)\left\|\mtx{H}\right\|_{\infty}$, based on Theorem \ref{thmgeoantileafsets}, for any $j\ge 1$, there is a point $\v\in \tU(\u,i_j(\u))$ satisfying that 
\[\|\v(t_{i_j(\u)})-\u(t_{i_j(\u)})\|_{\infty}=\frac{\epsilon}{d_o C_{\tU}(\u)\left\|\mtx{H}\right\|_{\infty}}.\]
For this $\v$, by Theorem \ref{thmgeoantileafsets}, we have
\[\sum_{i=0}^{i_j(\u)} \|\v(t_i)-\u(t_i)\|_{\infty}\le C_{\tU}(\u)\|\v(t_{i_j(\u)})-\u(t_{i_j(\u)})\|_{\infty}\le \frac{\epsilon}{d_o \left\|\mtx{H}\right\|_{\infty}}.\]
Using \eqref{equniformobs} and the union bound, the probability that $\v(t_{i_j(\u)})$ is included in the support of the filter given observations up to time $t_{i_j(\u)}$ is at least
$1-\left\|\mtx{H}\right\|_{\infty}\cdot \frac{d_o}{2\epsilon}\cdot \frac{\epsilon}{d_o\left\|\mtx{H}\right\|_{\infty}}=\frac{1}{2}$, and since $\u(t_{i_j(\u)})$ is included in the support, the stated result follows.
\end{proof}

\subsection{Characterisation of the support of the smoother of the geometric model as time tends to infinity}\label{secgeometricupperbounds}
In Section \ref{seclowerboundsgeometricsmoother} we have shown that for the geometric model, in the case of uniform observation errors in the interval $[-\epsilon,\epsilon]$, the expected value of the diameter of the support of smoothing distribution does not go to zero, but instead stays above $c(\u) h \epsilon$ for some constant $c(\u)$ only depending on the model parameters and the initial point $\u$. Let 
\begin{equation}\label{geoUuepsiondefeq}
U(\u,\epsilon):=\{\v\in \R^3: v_1=u_1, v_3=u_3, |v_2-u_2|< 2\epsilon\},
\end{equation}
which we will call the \emph{$2\epsilon$-cropped leaf set} of $\u$, a small line segment in the $u_2$ direction centered at $\u$. Our main result in this section characterises the support of the smoothing distribution by showing that it concentrates around the leaf set as the number of observations tends to infinity.

\begin{theorem}[Characterisation of the limit of the support of the smoother]\label{thmcharacterisationgeo}
Suppose that the observation matrix $\mtx{H}=\mtx{I}_{d\times d}$ (the identity matrix). Let $S_{k}$ be the support of the smoothing distribution of the geometric model based on the observations $Y_0, Y_1,\ldots, Y_k$. Then there are some positive constants $h_{\max}$ and $\epsilon_{\max}$ such that for any $0< \epsilon\le \epsilon_{\max}$,  $0< h\le h_{\max}$, for Lebesgue almost every $\u\in \Lambdageo$,
\begin{equation}\label{eqSkepsilonchar}
\sup_{\v\in S_k} d(\v, U(\u,\epsilon) )\to 0 \text{ as }k\to \infty \text{ almost surely in the observations},
\end{equation}
where $ d(\v, U(\u,\epsilon) ):=\inf_{\w\in U(\u,\epsilon)}\|\v-\w\|$. 
\end{theorem}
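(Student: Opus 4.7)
The plan is to show that the monotone decreasing intersection $S_\infty:=\bigcap_{k\ge 0} S_k$ is contained in the closure $\overline{U(\u,\epsilon)}$ almost surely in the observations, and then to upgrade this to the Hausdorff-type convergence \eqref{eqSkepsilonchar} by compactness. The deterministic envelope is immediate: since the uniform noise satisfies $\|\vct{Z}_i\|_\infty\le\epsilon$ almost surely, the triangle inequality applied to the defining constraint $\|\vct{Y}_i-\Psi_{t_i}(\v)\|_\infty\le\epsilon$ yields $\|\Psi_{t_i}(\v)-\u(t_i)\|_\infty\le 2\epsilon$ for every $\v\in S_k$ and every $i\le k$. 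Hence $S_\infty\subset T_\infty:=\{\v\in\Lambdageo:\|\Psi_{t_i}(\v)-\u(t_i)\|_\infty\le 2\epsilon\text{ for all }i\ge 0\}$, and in particular $\|\v-\u\|_\infty\le 2\epsilon$ at $i=0$.

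Next I would use the expansion of the return map to eliminate the unstable direction deterministically. By Assumption \ref{assalphabetatheta} the one-dimensional factor $f$ in $P(\u)=(f(u_1),g(u_1,u_2))$ satisfies $|f'|>\sqrt{2}$ on $[-1/2,1/2]\setminus\{0\}$, so after $n$ returns to $S$ two nearby orbits separate in the first coordinate by a factor of at least $2^{n/2}$. For Lebesgue almost every $\u\in\Lambdageo$ the orbit $\u(t)$ returns to $S$ infinitely often, and for $h\le h_{\max}$ small enough each such return is captured by at least one observation time $t_i$ with $\u(t_i)\in W^S_{0.1}$. Any $\v\in T_\infty$ with $O_1(\v)\ne O_1(\u)$ would therefore eventually violate $|\Psi_{t_i}^1(\v)-u_1(t_i)|\le 2\epsilon$; so necessarily $O_1(\v)=O_1(\u)$, which confines $\v$ to the two-dimensional centre-stable surface through $\u$'s orbit, parameterised locally as $\v=\Psi_\tau(\u)+s\cdot(0,1,0)$ up to higher-order terms, with $|s|,|\tau|=O(\epsilon)$ by the previous step. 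The $u_2$-direction here is exactly the leaf direction appearing in \eqref{eqUudef}.

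It remains to force $\tau=0$, and this is the probabilistic heart of the argument. Fix any $\tau\neq 0$; by the leaf contraction \eqref{eqpathdistance} in Theorem \ref{thmlowerboundsmoother1} the addendum $s\cdot(0,1,0)$ is asymptotically negligible, so it suffices to consider $\v^\tau:=\Psi_\tau(\u)$. Membership $\v^\tau\in S_\infty$ is equivalent to the infinite system of independent constraints $Z_{i,j}\in u_j(t_i+\tau)-u_j(t_i)+[-\epsilon,\epsilon]$ for all $i\ge 0$ and $j=1,2,3$. For $h\le h_{\max}$ infinitely many indices $i$ have $\u(t_i)\in W^S_{0.1}$, and \eqref{vmingeodefeq} then yields $|u_3(t_i+\tau)-u_3(t_i)|\ge \tfrac12 \vmingeo|\tau|$ whenever $|\tau|\le c\,\epsilon_{\max}$. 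The corresponding probability factor is at most $1-\vmingeo|\tau|/(4\epsilon)<1$, so by independence and Borel--Cantelli $\PP(\v^\tau\in S_\infty)=0$. Compactness of the set $\{\v\in T_\infty:d(\v,\overline{U(\u,\epsilon)})\ge\delta\}$ together with continuity of $\Psi_{t_i}$ in $\v$ allows one to cover it by finitely many small balls and upgrade this pointwise-in-$\tau$ statement to a uniform-in-$\v$ conclusion; sending $\delta\downarrow 0$ along a countable sequence gives $S_\infty\subset\overline{U(\u,\epsilon)}$ almost surely.

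Finally, to pass from $S_\infty\subset\overline{U(\u,\epsilon)}$ to \eqref{eqSkepsilonchar}, I would argue by contradiction: if $\sup_{\v\in S_k}d(\v,\overline{U(\u,\epsilon)})$ did not converge to zero, pick $\v_{k_j}\in S_{k_j}$ with distance at least some $L>0$, extract a convergent subsequence by compactness of $\Lambdageo$, and observe that each $S_k$ is closed and the family is decreasing, so the limit lies in $S_\infty$, contradicting the containment just proved. The main obstacle is the probabilistic elimination of the flow direction and its uniformization over the continuum of candidate displacements: the Borel--Cantelli bound is pointwise in $\tau$, but translating it into a uniform statement requires combining the leaf contraction with a covering argument that respects both the nonuniform speed $\|\tfrac{d\u}{dt}\|$ of the flow and the singular behaviour of the return map near the origin. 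Carefully choreographing the deterministic expansion argument with the stochastic cancellation — especially near the pathological slowdown region around the origin — is the delicate part.
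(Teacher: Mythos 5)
Your proposal is correct and follows essentially the same route as the paper: reduce to the intersection $S_\infty=\cap_k S_k$, use the expansion of $f$ to confine $S_\infty$ deterministically to the time-shifted cropped leaf set, and then exclude nonzero time shifts almost surely via the transversal speed bound \eqref{vmingeodefeq} at the infinitely many observation times near $S$ together with the independence of the third noise coordinate. The one step you flag as delicate --- uniformising the pointwise-in-$\tau$ Borel--Cantelli bound over the continuum of displacements --- is handled in the paper by observing that the single noise event $Z^{3}_{i_j}<-\epsilon+s\vmingeo$ excludes the \emph{entire} restriction $W_{+}(\u,\epsilon,s)$ (and symmetrically $W_{-}$) at once, so a countable union over $s=t_{\max}/i$ replaces your finite ball covering; your covering argument would work but needs exactly this set-level exclusion on each ball.
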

\begin{rem}
In Lemma \ref{lemmasuppunif} of Section \ref{seclowerbounds}, we prove a more precise formulation of the probability that a point $\v$ is included in the support of the smoother. Using that formulation, it is possible to show that every point in the $2\epsilon$-cropped leaf set $U(\u,\epsilon)$ have a positive probability of being included in the support of the smoother of the geometric model.
\end{rem}
The proof of this theorem is based on a few preliminary definitions and results. Since the support of smoother, $S_k$, are compact sets, if we let $S_{\infty}:=\cap_{k=1}^{\infty}S_k$, then one can show that the statement of Theorem \ref{thmcharacterisationgeo} is equivalent to showing that
\begin{equation}\label{eqSkintersection}S_{\infty} \subset \ol{U(\u,\epsilon)} \text{ almost surely},\end{equation}
where $\ol{U(\u,\epsilon)}$ denotes the closure of the $2\epsilon$-cropped leaf set $U(\u,\epsilon)$. Indeed, the fact that \eqref{eqSkepsilonchar} implies \eqref{eqSkintersection} is immediate. In the other direction, suppose that \eqref{eqSkintersection} holds but \eqref{eqSkepsilonchar} does not hold, then there is sequence of indices $i_{1}<i_{2}<\ldots$, a sequence of points $\v^{(i_1)}\in S_{i_1,\epsilon}, \v^{(i_2)}\in S_{i_2,\epsilon},\ldots$, and a positive constant $c>0$ such that  $d(\v^{(i_j)}, U(\u,\epsilon)>c$ for every $j\ge 1$. Due to the fact that $S_k$ are compact sets, and $S_k\subset S_{l}$ for $l<k$, we can see that the sequence $\v^{(i_j)}$ has at least one limiting point $\v^*$, which is in $S_{\infty}$, and by continuity of the distance function, satisfies that $d(\v^*,U(\u,\epsilon))\ge c$, contradicting \eqref{eqSkintersection}.

The next lemma establishes a useful expansion property of the return map $f$.
\begin{lem}\label{lemmafexpansion}
For every $a,b\in [-1/2,1/2]$ with $|a-b|\le 0.1$, we have \[|f(a)-f(b)|\ge \sqrt{2} |a-b|.\]
\end{lem}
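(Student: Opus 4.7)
The plan is to analyze the derivative $f'$ on each of the two monotone branches of $f$, use Assumption~\ref{assalphabetatheta}(3) to get the uniform bound $|f'| > \sqrt{2}$ there, and then handle the case when $a$ and $b$ straddle zero by a direct estimate.

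First I would compute the derivative on each branch: for $u_1 \ne 0$, differentiating \eqref{feq} on either side gives $|f'(u_1)| = \theta\alpha |u_1|^{\alpha-1}$. Since $0<\alpha<1$, the map $u_1 \mapsto |u_1|^{\alpha-1}$ is decreasing in $|u_1|$, so on $[-1/2,1/2]\setminus\{0\}$ the derivative is minimised at $|u_1|=1/2$, with value $\theta\alpha \cdot 2^{1-\alpha} = 2\alpha\theta/2^{\alpha}$. Assumption~\ref{assalphabetatheta}(3) gives $\theta > 2^{\alpha}/(\sqrt{2}\alpha)$, whence $2\alpha\theta/2^{\alpha} > \sqrt{2}$. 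So $|f'(u_1)| > \sqrt{2}$ strictly, for all $u_1 \in [-1/2,1/2]\setminus\{0\}$. If $a$ and $b$ lie on the same side of $0$, the mean value theorem immediately gives $|f(a)-f(b)|\ge \sqrt{2}|a-b|$.

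The only remaining case is when $a$ and $b$ are on opposite sides of $0$; by symmetry assume $a>0>b$. Then $|a-b|=a+|b|\le 0.1$, so in particular $a,|b|\le 0.1$. From \eqref{feq},
\[
 |f(a)-f(b)| \;=\; \bigl|\theta a^{\alpha} - \tfrac12 - (-\theta|b|^{\alpha}+\tfrac12)\bigr| \;=\; \bigl|\,1 - \theta(a^{\alpha}+|b|^{\alpha})\,\bigr|.
\]
Using concavity of $x\mapsto x^{\alpha}$ on $[0,\infty)$, I would bound $a^{\alpha}+|b|^{\alpha}\le 2^{1-\alpha}(a+|b|)^{\alpha} = 2^{1-\alpha}|a-b|^{\alpha}$. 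Since $\theta<2^{\alpha}$ by Assumption~\ref{assalphabetatheta}(3), this yields $\theta(a^{\alpha}+|b|^{\alpha})\le 2|a-b|^{\alpha}\le 2\cdot 0.1^{\alpha}$. As $\alpha>1/\sqrt{2}$, we have $0.1^{\alpha}<0.1^{1/\sqrt{2}}<0.194$, so $\theta(a^{\alpha}+|b|^{\alpha})<0.39$, hence $|f(a)-f(b)|>0.61$. On the other hand $\sqrt{2}|a-b|\le \sqrt{2}\cdot 0.1<0.15$, so the inequality $|f(a)-f(b)|\ge \sqrt{2}|a-b|$ follows with room to spare.

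The main conceptual obstacle is just the opposite-sides case, because there $f$ has a jump discontinuity of size $1$ across $0$, so the one-branch derivative bound does not apply; the trick is that the $0.1$-closeness assumption keeps both points inside a tiny neighbourhood of $0$ where the concavity inequality plus Assumption~\ref{assalphabetatheta} forces $|f(a)-f(b)|$ to stay close to the jump size, which is comfortably larger than $\sqrt{2}|a-b|$. Everything else is a one-line application of the mean value theorem.
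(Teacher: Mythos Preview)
Your proof is correct and follows essentially the same route as the paper: the derivative bound $|f'|>\sqrt{2}$ plus the mean value theorem on each branch for the same-sign case, and a direct numerical estimate exploiting the jump at $0$ for the opposite-sign case. One tiny numerical slip: $0.1^{1/\sqrt{2}}\approx 0.196$, not $<0.194$, but this is harmless since you still get $\theta(a^{\alpha}+|b|^{\alpha})<0.4<1$ and hence $|f(a)-f(b)|>0.6\gg \sqrt{2}\cdot 0.1$.
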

\begin{proof}
If both $a$ and $b$ have the same sign, then this follows from the fact that $f'(x)>\sqrt{2}$ for every $x\in [-1/2,1/2]\setminus 0$. If they have different sign, then by \eqref{feq} and Assumption \ref{assalphabetatheta},
$|f(a)-f(b)| \ge 1-2\theta \cdot (0.1)^{\alpha}\ge 1-4 \cdot (0.1)^{1/\sqrt{2}}>0.1\sqrt{2}$, so the stated result holds.
\end{proof}
In order to fully exploit this expansion property, we will need to assume that the path $\u(t):=\Psi^{\mathrm{geo}}_t(\u)$ from the initial point $\u$ crosses $S$ infinitely many times, that is,
\begin{equation}\label{ucrossesSinfinitelyeq}
\u(t)\in S \text{ for infinitely many }t\ge 0.
\end{equation}
Based on the definition of the model, it is not difficult to show that this assumption is satisfied for Lebesgue-almost every $\u\in \Lambdageo$. So for the purpose of proving Theorem \ref{thmcharacterisationgeo}, for the rest of this section, we are going to assume that \eqref{ucrossesSinfinitelyeq} holds.

Let $B_{2\epsilon}^{\infty}(\u):=\{\v \in \Lambdageo: \|\v-\u\|_{\infty}\le 2\epsilon\}$, $t_{\max}:=\frac{0.05}{\vmaxgeo}$ (see \eqref{vmaxgeodefeq}), and define the \emph{time-shifted $2\epsilon$-cropped leaf set} of $\u$ as
\begin{align}
\nonumber W(\u,\epsilon,t_{\max}):=\big\{&\v \in B_{2\epsilon}^{\infty}(\u) \text{ for which there is a }\w\in U(\u,\epsilon) \text{ and }t\in [-t_{\max},t_{\max}]\\
&\text{ such that either } t\ge 0\text{ and }\w(t)=\v \text{ or }t< 0\text{ and }\v(-t)=\w\big\}.\label{eqtimeshiftedleaf}
\end{align}
Based on the expansion property of the return map $f$, the following lemma shows that only the points in the time-shifted $2\epsilon$-cropped leaf set $W(\u,\epsilon,t_{\max})$ can be included in $S_{\infty}$.
\begin{lem}\label{lemSinftyinWuepsilon}
Let $h_{\max}:=\frac{1}{80}\frac{\vmingeo}{(\vmaxgeo)^2}$ and $\epsilon_{\max}:=\frac{1}{80}\frac{\vmingeo}{\vmaxgeo}$, then for any $0< \epsilon\le \epsilon_{\max}$,  $0< h\le h_{\max}$, we have $S_{\infty}\subset W(\u,\epsilon,t_{\max})$.
\end{lem}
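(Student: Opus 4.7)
The strategy is to identify any $\v\in S_\infty$ with a small time-shift of some $\w$ on the cropped leaf $U(\u,\epsilon)$, by using the $\sqrt 2$-expansion of the one-dimensional return map $f$ (Lemma~\ref{lemmafexpansion}) to eliminate the unstable direction. First, $\v\in S_\infty$ combined with the uniform noise on $[-\epsilon,\epsilon]^3$ and $\mtx{H}=\mtx{I}$ gives $\|\v(t_i)-\u(t_i)\|_\infty\le 2\epsilon$ at every observation time; interpolating with the global speed bound $\vmaxgeo$ from \eqref{vmaxgeodefeq} extends this to $\|\v(t)-\u(t)\|_\infty\le 2\epsilon+\vmaxgeo h/2$ for all $t\ge 0$, so in particular $\v\in B_{2\epsilon}^\infty(\u)$.

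The heart of the proof compares the crossings of $S$ by $\u$ and $\v$. By \eqref{ucrossesSinfinitelyeq} the path $\u$ visits $S$ at an infinite sequence of times $\tau_1<\tau_2<\cdots$. Since $|v_3(\tau_n)-1|\le 2\epsilon+\vmaxgeo h$ is small and the vertical speed is at least $\vmingeo$ in the strip $W^S_{0.1}$ by \eqref{vmingeodefeq}, the path $\v$ crosses $S$ at nearby times $\tau'_n$ with $|\tau'_n-\tau_n|\le (2\epsilon+\vmaxgeo h)/\vmingeo$. A direct computation using $\epsilon\le\epsilon_{\max}$ and $h\le h_{\max}$ shows $|\tau'_n-\tau_n|\le t_{\max}$ and $\|\u(\tau_n)-\v(\tau'_n)\|_\infty<0.1$, so Lemma~\ref{lemmafexpansion} applies to the first coordinates to yield
\[
|u_1(\tau_{n+1})-v_1(\tau'_{n+1})|\ge \sqrt 2\,|u_1(\tau_n)-v_1(\tau'_n)|
\]
at every crossing. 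Because the left-hand side stays uniformly bounded along the infinite sequence, this expansion forces $u_1(\tau_n)=v_1(\tau'_n)$ for all $n$; and since the return-time formula \eqref{eqreturntime} depends only on $u_1$, the shift $s:=\tau'_n-\tau_n$ is in fact independent of $n$.

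With the constant shift $s$ in hand, set $\wtilde{\v}(t):=\v(t+s)$. The key structural fact about the geometric model is that the $u_1$ and $u_3$ dynamics are decoupled from $u_2$, both in the linear phase \eqref{eqflow} and in the rotational phase \eqref{SigmaptoSeveq}--\eqref{SigmamtoSeveq}. Since $\wtilde{\v}(\tau_0)$ and $\u(\tau_0)$ agree on the first and third coordinates, this equality propagates to all relevant times; in particular $\w:=\wtilde{\v}(0)=\v(s)$ satisfies $w_1=u_1$ and $w_3=u_3$, so $\w$ lies on the full leaf through $\u$. By construction $|s|\le t_{\max}$, and either $\v(|s|)=\w$ (if $s>0$) or $\w(|s|)=\v$ (if $s<0$), matching the two branches of the definition of $W(\u,\epsilon,t_{\max})$ with time parameter $\pm s$.

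It remains to verify $|w_2-u_2|<2\epsilon$ so that $\w$ actually belongs to the cropped leaf $U(\u,\epsilon)$. This is the most delicate step: the naive estimate $|w_2-u_2|\le|v_2-u_2|+\vmaxgeo|s|$ uses the worst-case speed and is not quite sharp enough. The plan is to sharpen both factors by exploiting the specific linear form of the $u_2$-dynamics along the common $(u_1,u_3)$-trajectory of $\wtilde{\v}$ and $\u$ --- contraction at rate $e^{-\lambda_2 t}$ during the $S$-to-$\Sigma$ phase and a purely additive drift during the $\Sigma$-to-$S$ phase --- and by sharpening the bound on $|s|$ through the observation-time constraints $|v_1(t_i)-u_1(t_i)|\le 2\epsilon$ at instants where $|\dot u_1|$ is close to its maximum. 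The main obstacle throughout is really the expansion step of the second paragraph: once the unstable direction is rigidly pinned down by Lemma~\ref{lemmafexpansion} at infinitely many crossings, the rest of the argument reduces to careful linear-ODE bookkeeping in the stable direction.
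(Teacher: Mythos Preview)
Your approach is essentially the paper's: use the $\sqrt 2$-expansion of Lemma~\ref{lemmafexpansion} at successive $S$-crossings to force the first coordinates at the crossings to agree, then identify $\v$ as a bounded time-shift of a leaf point~$\w$. The paper phrases this as a dichotomy at a single crossing (either $u'_1=v'_1$, in which case $\v$ is already a small time-shift of a leaf point, or $u'_1\ne v'_1$, in which case repeated application of Lemma~\ref{lemmafexpansion} drives the return points more than $0.1$ apart and hence $\v\notin S_\infty$); you run the same expansion along the infinite sequence of crossings and read off $u_1(\tau_n)=v_1(\tau'_n)$ from boundedness. This is the same argument.

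Two remarks. First, your interpolation bound should be $2\epsilon+\vmaxgeo h$ (not $\vmaxgeo h/2$): one picks the nearer observation time, at distance $\le h/2$, but must move \emph{both} $\u$ and $\v$. Second, the step you single out as ``most delicate'' --- showing $|w_2-u_2|<2\epsilon$ so that $\w\in U(\u,\epsilon)$ --- is in fact not closed in the paper's own proof either: there the claim is only $\w\in U(\u)$ (the uncropped leaf of~\eqref{eqUudef}) before concluding $\v\in W(\u,\epsilon,t_{\max})$, and your naive estimate $|w_2-u_2|\le |v_2-u_2|+\vmaxgeo|s|$ indeed overshoots $2\epsilon$ by a term of order $\epsilon\,\vmaxgeo/\vmingeo$. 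Your proposed sharpening via the explicit $u_2$-dynamics is plausible but will not recover the literal constant $2\epsilon$. For the purposes of Theorem~\ref{thmcharacterisationgeo} this is harmless --- the downstream argument uses only the time-shift structure and eliminates the shift entirely --- so a bound $|w_2-u_2|\le C\epsilon$ for a model constant~$C$ (which you already have) is what is actually needed. I would not invest further effort in the cropping constant; the real content is the expansion step, and you have that right.
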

\begin{proof}
First note that since the maximum speed of the dynamics is bounded by $\vmaxgeo$ (see \eqref{vmaxgeodefeq}), we have that
\begin{equation}\label{maxspeednotinveq}
\v \notin S_{\infty} \text{ if }\|\v(t)-\u(t)\|_{\infty}>2\epsilon+2h\vmaxgeo \text{ for some }t\ge 0,
\end{equation}
since otherwise there would certainly exist some $k\in \N$ such that $\|\v(t_k)-\u(t_k)\|_{\infty}>2\epsilon$.

Since $S_{\infty}\subset B_{2\epsilon}^{\infty}(\u)$, we only need to check the points $\v\in B_{2\epsilon}^{\infty}(\u)\setminus W(\u,\epsilon,t_{\max})$. 
Note that since we have assumed in \eqref{ucrossesSinfinitelyeq} that $\u(t)$ crosses $S$ infinitely often, we can also assume without loss of generality that $\v(t)$ crosses $S$ infinitely often, otherwise $\v\notin S_{\infty}$ by \eqref{maxspeednotinveq}.

Suppose first that $\u, \v\in W^{S}_{0.1}$ with $u_3\ge 1$ and  $v_3<1$ (thus $\u$ is above $S$ and $\v$ is below $S$ on Figure \ref{geometric:sub1}) . Then define $\u'$ as the first intersection of $\u(t)$ and $S$ for $t\ge 0$, and let $\v':=O(\v)$ (the origin of $v$ on $S$, see \eqref{originofveq}).

Now we compare the first coordinates $u'_1$ and $v'_1$. If $u'_1=v'_1$, and $\v\in S_{\infty}$, then from the definition of the process, we can see that there must exist a point $\w \in U(\u)$ and a constant $s>0$ such that $\w(s)=\v$. Moreover, from \eqref{vmingeodefeq} it follows that $s\le \frac{2\epsilon_{\max}}{\vmingeo}\le t_{\max}$, so therefore $\v$ must be in $W(\u,\epsilon,t_{\max})$, which we do not need to check. 

Alternatively, if $u'_1\ne v'_1$, then by Lemma \ref{lemmafexpansion}, after sufficient amount of returns, the first coordinates will satisfy that $|f^{(k)}(u'_1)-f^{(k)}(v'_1)|>0.1$ (here $f^{(k)}(x)$ denotes the $k$ times composition of $f$ with itself). However, if $\|\v(t)-\u(t)\|_{\infty}\le 2\epsilon+2h\vmaxgeo$ for every $t\ge 0$, then by \eqref{vmingeodefeq} and \eqref{vmaxgeodefeq} we know that the return points on $S$ cannot be further away than 
\[(2\epsilon+2h\vmaxgeo)+\frac{(2\epsilon+2h\vmaxgeo)}{\vmingeo}\cdot \vmaxgeo<0.1.\]
Thus  $\|\v(t)-\u(t)\|_{\infty}\le 2\epsilon+2h\vmaxgeo$ cannot hold for every $t\ge 0$, and by \eqref{maxspeednotinveq},  $\v\notin S_{\infty}$.

In the case when $\u, \v\in W^{S}_{0.1}$ with $u_3< 1$ and  $v_3\ge 1$, we define $\u':=O(\u)$ and $\v'$ as the first intersection of $\v(t)$ and $S$ for $t\ge 0$. Finally, in every other situation we define $\u':=O(\u)$ and $\v':=O(\v)$. The rest of the argument is the same as in the case we have considered above.
\end{proof}

Now we are ready to prove the main result of this section.
\begin{proof}[Proof of Theorem \ref{thmcharacterisationgeo}]
Based on \eqref{eqSkintersection} and Lemma \ref{lemSinftyinWuepsilon}, it suffices to check points $\v$ in the $2\epsilon$-cropped time-shifted leaf set $W(\u,\epsilon,t_{\max})$ (see \eqref{eqtimeshiftedleaf}).  For such points, let $\Delta(\v)$ denote the value of $t$ in the definition \eqref{eqtimeshiftedleaf}, this is the \emph{time shift} of $\v$, satisfying that $|\Delta(\v)|\le t_{\max}$. For $0< s\le t_{\max}$, let us define the \emph{restrictions of the time-shifted leaf set} as
\begin{align*}
W_+(\u,\epsilon,s)&:=\{\v\in W(\u,\epsilon,t_{\max}): \Delta(\v)>s\}, \quad \text{ and }\\
W_-(\u,\epsilon,s)&:=\{\v\in W(\u,\epsilon,t_{\max}): \Delta(\v)<-s\}.
\end{align*}
For $h\le h_{\max}$ (defined as in Lemma \ref{lemSinftyinWuepsilon}), one can see that there are going to be an infinite sequence of observation times $t_{i_1},t_{i_2},\ldots$ such that $\u(t_{i_j})\in W^S_{0.05}$ (see \eqref{WSsdefeq}). At these time points, using \eqref{vmingeodefeq}, for $0< s\le t_{\max}$, we have
\[\inf_{\v\in W_+(\u,\epsilon,s)}v_3(t_{i_j})\ge u_3(t_{i_j})+s\vmingeo\text{ and }\sup_{\v\in W_-(\u,\epsilon,s)}v_3(t_{i_j})\le u_3(t_{i_j})-s\vmingeo.\]
Let $Z^{3}_{i_j}$ denote the third component of the observation noise at time $t_{i_j}$, then if $Z^{3}_{i_j}< -\epsilon+s\vmingeo$, then none of the points in the restriction of the $2\epsilon$-cropped time-shifted leaf set $W_+(\u,\epsilon,s)$ are in the limiting set $S_{\infty}$. This event has probability $s\vmingeo/2\epsilon>0$, and since there are infinitely many such indices $i_j$, and $(Z^{3}_{i_j})_{j\ge 1}$ are independent, therefore $W_+(\u,\epsilon,s)$ and $W_-(\u,\epsilon,s)$ are almost surely disjoint with $S_{\infty}$. Since we can write $W(\u,\epsilon,t_{\max})\setminus\ol{U(\u,\epsilon)}$ as a countable union 
\[W(\u,\epsilon,t_{\max})\setminus \ol{U(\u,\epsilon)}=\cup_{i\ge 1} \left(W_+(\u,\epsilon,t_{\max}/i)\cup W_-(\u,\epsilon,t_{\max}/i)\right),\]
therefore almost surely only the points $\v$ in $\ol{U(\u,\epsilon)}$ can be included in the limiting set $S_{\infty}$, and thus \eqref{eqSkepsilonchar} follows via \eqref{eqSkintersection}.
\end{proof}

\section{Lower bounds for a class of chaotic dynamical systems}\label{seclowerbounds}
In Section \ref{seclowerboundsgeometricsmoother}, we have given lower bounds for the smoother and the filter of the geometric model. In this section, we will extend such results to a class of chaotic dynamical systems satisfying some appropriate assumptions. We treat the cases of both uniform and Gaussian error distributions. Our results are organised into four subsections. In Section \ref{seclowerboundsuniform}, we consider lower bounds on the diameter of the support of the smoother and the filter under uniform error distributions. This is followed by Section \ref{seclowerboundsgaussian}, where we consider bounds for Gaussian error distributions. Finally, Section \ref{seclowerboundssimulations} gives some numerical simulations that seem to indicate the validity of the assumptions of the previous three sections for the Lorenz 63' and Lorenz 96' models.
\subsection{Lower bounds for uniform noise}\label{seclowerboundsuniform}
In this section, we will first consider the support of the smoothing and filtering distributions (\eqref{eqmusm} and \eqref{eqmufi}) when we have observation matrix $\mtx{H}$, and the noise variables $(\vct{Z}_i)_{i\ge 0}$ are i.i.d., uniformly distributed in $[-\epsilon,\epsilon]^{d_o}$. 
The $L_1$ norm of a vector $\v\in \R^d$ is defined as $\|\v\|_1:=\sum_{i=1}^d |v_i|$. For a matrix $\mtx{M}\in \R^{d_1\times d_2}$, we let its $L_1$ norm be the induced norm $\|\mtx{M}\|_{1}:=\sup_{\v\in \R^{d_2}, \|\v\|_1\le 1}\|\mtx{M}\v\|_1$. The following lemma is a key tool in this section.
\begin{lem}[Bounding the probability that a point is in the support]\label{lemmasuppunif}
Let $\vct{Y}_i=\mtx{H}\u(t_i)+\vct{Z}_i$ be the noisy observations at time points $0\le t_i\le t_k$ obtained from \eqref{diffeqgeneralform} started at some initial point $\u\in \BR$. Suppose that observation errors are uniformly distributed in $[-\epsilon,\epsilon]^{d_o}$. Suppose that $\v\in \BR$ is a fixed point, and the prior $q$ satisfies that $q(\v)>0$ and $q(\u)>0$. Let $S_k$ denote the support of the smoothing distribution $\musm(\cdot |\vct{Y}_0,\ldots, \vct{Y}_k)$. Then the probability that $\v$ is included in the support of the smoothing distribution is given as
\begin{align}\label{probvinsupporteq}\PP(\v\in S_k|\u)=\prod_{i=0}^k\prod_{j=1}^{d_o} \left( 1-\frac{|(\mtx{H}\u(t_i))_j- (\mtx{H}\v(t_i))_j)|}{2\epsilon}\right)_+.
\end{align}
Let $D_k^{(1)}(\u,\v):=\sum_{i=0}^k \|\v(t_i)-\u(t_i)\|_1$, and $M_k(\u,\v):=\max_{0\le i\le k} \|\mtx{H}\v(t_i)-\mtx{H}\u(t_i)\|_{\infty}$. Then the probability of the inclusion can be lower bounded as
\begin{equation}
\label{probvinsupportlowerboundeq}\PP(\v\in S_k|\u)\ge\exp\left(-\frac{D_k^{(1)}(\u,\v)\|\mtx{H}\|_1}{\epsilon}\right) \text{ for } \epsilon\ge M_k(\u,\v).
\end{equation}
Moreover, if $d_o=d$ and $\mtx{H}$ is not singular, then we have the upper bound
\begin{equation}\label{probvinsupportupperboundeq}
\PP(\v\in S_k|\u)\le \exp\left(-\frac{D_k^{(1)}(\u,\v)}{2 \epsilon \|\mtx{H}^{-1}\|_1}\right) \text{ for any }\epsilon>0.\\
\end{equation}
\end{lem}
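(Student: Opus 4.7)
The plan is to first derive the exact product formula \eqref{probvinsupporteq} directly from the definition of the support, and then obtain the two inequalities by applying the elementary bounds $e^{-2x}\le 1-x\le e^{-x}$ for $x\in[0,1/2]$ to each factor, combined with operator norm inequalities for $\mtx{H}$ (and $\mtx{H}^{-1}$).

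First I would unpack what membership in $S_k$ means. Inspecting \eqref{eqmusm}, since $q(\v)>0$, we have $\v\in S_k$ if and only if $\eta(\vct Y_i-\mtx H\v(t_i))>0$ for every $i=0,\ldots,k$, which (using that $\eta$ is the density of the uniform on $[-\epsilon,\epsilon]^{d_o}$) is equivalent to $\|\vct Y_i-\mtx H\v(t_i)\|_\infty\le\epsilon$. Substituting $\vct Y_i=\mtx H\u(t_i)+\vct Z_i$ and writing $\Delta_{i,j}\defby(\mtx H\v(t_i))_j-(\mtx H\u(t_i))_j$, this translates into $Z_{i,j}\in[\Delta_{i,j}-\epsilon,\Delta_{i,j}+\epsilon]$ for every $i,j$. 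Since the coordinates of $\vct Z_i$ are i.i.d.\ uniform on $[-\epsilon,\epsilon]$ and the $\vct Z_i$'s are themselves independent, the desired probability factorises as a product over $(i,j)$. For each factor, the probability is the length of the intersection $[\Delta_{i,j}-\epsilon,\Delta_{i,j}+\epsilon]\cap[-\epsilon,\epsilon]$ divided by $2\epsilon$, which equals $(1-|\Delta_{i,j}|/(2\epsilon))_+$. This yields \eqref{probvinsupporteq}.

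For the lower bound \eqref{probvinsupportlowerboundeq}, observe that when $\epsilon\ge M_k(\u,\v)$ we have $|\Delta_{i,j}|/(2\epsilon)\le 1/2$ for all $i,j$. On $[0,1/2]$ the elementary inequality $1-x\ge\exp(-2x)$ holds (which follows, e.g., from $-\log(1-x)=\sum_{\ell\ge1}x^\ell/\ell\le x/(1-x)\le 2x$). Applying this factorwise and taking logarithms gives
\[
\PP(\v\in S_k|\u)\;\ge\;\exp\!\l(-\frac{1}{\epsilon}\sum_{i=0}^k\sum_{j=1}^{d_o}|\Delta_{i,j}|\r)
=\exp\!\l(-\frac{1}{\epsilon}\sum_{i=0}^k\|\mtx H(\v(t_i)-\u(t_i))\|_1\r).
\]
Using $\|\mtx H\vct w\|_1\le\|\mtx H\|_1\|\vct w\|_1$ in each summand and recognising the definition of $D_k^{(1)}(\u,\v)$ produces \eqref{probvinsupportlowerboundeq}.

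For the upper bound \eqref{probvinsupportupperboundeq}, use the universal inequality $(1-x)_+\le e^{-x}$ (valid for all real $x$), which after multiplying and taking logarithms gives
\[
\PP(\v\in S_k|\u)\;\le\;\exp\!\l(-\frac{1}{2\epsilon}\sum_{i=0}^k\|\mtx H(\v(t_i)-\u(t_i))\|_1\r).
\]
When $\mtx H$ is invertible, the bound $\|\vct w\|_1=\|\mtx H^{-1}\mtx H\vct w\|_1\le\|\mtx H^{-1}\|_1\|\mtx H\vct w\|_1$ applied with $\vct w=\v(t_i)-\u(t_i)$ yields $\|\mtx H(\v(t_i)-\u(t_i))\|_1\ge\|\v(t_i)-\u(t_i)\|_1/\|\mtx H^{-1}\|_1$; summing over $i$ gives \eqref{probvinsupportupperboundeq}. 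The main subtle point in the whole argument is the initial identification of the support in terms of the noise components and the fact that the uniform distribution on a product of intervals factorises, which is what makes the exact product formula possible; once that is set up, the two bounds reduce to elementary scalar inequalities combined with standard operator norm estimates.
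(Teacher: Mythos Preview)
Your proof is correct and follows essentially the same approach as the paper: derive the product formula from independence of the uniform noise coordinates, then apply $1-x\ge e^{-2x}$ on $[0,1/2]$ for the lower bound and $(1-x)_+\le e^{-x}$ for the upper bound. You have simply made explicit the operator-norm steps (bounding $\|\mtx{H}\vct{w}\|_1$ above by $\|\mtx{H}\|_1\|\vct{w}\|_1$ and below by $\|\vct{w}\|_1/\|\mtx{H}^{-1}\|_1$) that the paper leaves implicit.
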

\begin{proof}
Let $w\in \R$, and $W$ be uniformly distributed in $w-\epsilon, w+\epsilon$. Then the probability that another point $r\in \R$ is less than $\epsilon$ away from $W$ is
\[\PP(|r-W|\le \epsilon)=\frac{(2\epsilon-|r-w|)_+}{2\epsilon}=\left( 1-\frac{|r-w|}{2\epsilon}\right)_+.\]
Using this and the independence of the components of the noise vectors $(\vct{Z}_i)_{0\le i\le k}$, we have
\begin{align*}\PP(\v\in S_k|\u)&=\PP\left(\|\v(t_i)-\vct{Y}_i\|_{\infty}\le \epsilon \text{ for }0\le i\le k |\u\right)\\
&=\E\left(\prod_{i=0}^k \II\left[\|\mtx{H}\v(t_i)-(\mtx{H}\u(t_i)+\vct{Z}_i)\|_{\infty}\le \epsilon\right] | \u\right)\\
&=\prod_{i=0}^k\prod_{j=1}^{d_o} \left( 1-\frac{|(\mtx{H}\u(t_i))_j- (\mtx{H}\v(t_i))_j)|}{2\epsilon}\right)_+.
\end{align*}
The upper bound \eqref{probvinsupportupperboundeq} follows by taking the logarithm of both sides and using the inequality $\log((1-x)_+)\le -x$ for $x>0$. The lower bound \eqref{probvinsupportlowerboundeq} follows from the fact that $1-x\ge \exp(-2x)$ for $0\le x\le \frac{1}{2}$.
\end{proof}
A consequence of this lemma is that if $\sup_{k\in\N}D_k^{(1)}(\u,\v)=\infty$, and $\mtx{H}$ is not singular, then the probability that $\v$ is in the support of the smoother tends to $0$ as $k\to \infty$. Conversely, if $\sup_{k\in\N}D_k^{(1)}(\u,\v)<\infty$, then for $\epsilon$ sufficiently large (larger than 
$\sup_{k\in \N}M_k(\u,\v))$, the probability that $\v$ is included in the support of the smoother is  lower bounded by $\exp\left(-\frac{1}{\epsilon}\sup_{k\in\N}D_k^{(1)}(\u,\v)\right)$, independently of $k$.
Due to this property, we have found that the following assumption is useful for establishing lower bounds on the diameter of the smoothing distribution.
\begin{ass}\label{assuniflow}
Suppose that there is a set of points $U(\u)\subset \BR$, called the \emph{leaf set} of $\u$, such that
$\{\|\v-\u\|_1: \v\in U(\u) \}=[0, d_{\max}(\u)]$ for some $d_{\max}(\u)>0$, and that there is a finite constant $C_U^{(1)}(\u)$ such that for every $\v\in U(\u)$,
\begin{equation}\label{assunifloweq}
\sup_{k\in\N}D_k^{(1)}(\u,\v)\le  C_U^{(1)}(\u) \|\v-\u\|_1.
\end{equation}
\end{ass}
The assumption essentially means that there exists a curve $U(\u)$ containing $\u$ such that for every point $\v\in U(\u)$, the distance between $\u(t)$ and $\v(t)$ tends to 0 as $t$ tends to infinity (at a sufficiently quick rate). This concept of leaf set is similar to the concept of a leaf of a foliation used in \cite{Tucker2002}, see also \cite{Viana2000}. Note that it is also similar to the concept of stable set (also called local stable manifold) used in the theory of dynamical systems (see e.g. page 18 of \cite{Shubglobalstability}).  Note that in this assumption, $h$ is fixed and does not tends to zero (and the constant $C_U^{(1)}(\u)$ depends on $h$). In the case of the geometric model,  Theorem \ref{thmlowerboundsmoother1} has shown the existence of a leaf set in $U(\u)$, and based on \eqref{eqpathdistance}, one can see that the condition \eqref{assunifloweq} of the above assumption is satisfied for any $h>0$.

The following theorem gives a lower bound for the smoother based on the above assumption. Section \ref{seclowerboundssimulations} includes numerical tests of this assumption for the Lorenz 63' and 96'  models. 
\begin{theorem}[Lower bound on the diameter of the support of the smoother]\label{thmlowerboundsmoother}
Suppose that Assumption \ref{assuniflow} holds, we have observation matrix $\mtx{H}$, and that the observation noise is uniformly distributed in $[-\epsilon,\epsilon]^{d_o}$. Suppose that the prior $q$ satisfies that $q(\v)>0$ for every $\v\in U(\u)$. Then for $\epsilon\le d_{\max}(\u) \cdot C_{U}^{(1)}(\u)\cdot \|\mtx{H}\|_1$, we have
\begin{equation}\label{diamlowboundunifeq}
\E(\mathrm{diam}_1\mathrm{ supp} \musm(\cdot |\vct{Y}_0,\ldots, \vct{Y}_k)|\u)\ge \frac{1}{e}\cdot \frac{\epsilon}{C_{U}^{(1)}(\u) \cdot \|\mtx{H}\|_1},
\end{equation}
where $\mathrm{diam}_1\mathrm{ supp}$ denotes diameter of the support with respect to the $L_1$ norm.
\end{theorem}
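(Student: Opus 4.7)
The strategy is to exhibit a single explicit point $\v \in U(\u)$ such that (i) the expansion bound of Lemma~\ref{lemmasuppunif} applies uniformly in $k$ and (ii) $\v$ is at a controlled distance from $\u$, and then use the fact that $\u$ itself lies in the support almost surely to convert a lower bound on $\PP(\v\in S_k\mid\u)$ into the claimed lower bound on the expected diameter.

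Concretely, I would pick $\v \in U(\u)$ with
\[
\|\v-\u\|_1 \;=\; \frac{\epsilon}{C_U^{(1)}(\u)\,\|\mtx{H}\|_1}.
\]
Such a $\v$ exists because Assumption~\ref{assuniflow} tells us that $\{\|\v-\u\|_1:\v\in U(\u)\}=[0,d_{\max}(\u)]$, and the standing hypothesis $\epsilon \le d_{\max}(\u)\cdot C_U^{(1)}(\u)\cdot \|\mtx{H}\|_1$ guarantees that the chosen value lies in this interval. Applying \eqref{assunifloweq} then immediately yields
\[
\sup_{k\in\N} D_k^{(1)}(\u,\v)\;\le\;C_U^{(1)}(\u)\,\|\v-\u\|_1\;=\;\frac{\epsilon}{\|\mtx{H}\|_1}.
\]

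The next step is to check that the regime of the lower bound \eqref{probvinsupportlowerboundeq} in Lemma~\ref{lemmasuppunif} is in force, i.e.\ that $M_k(\u,\v)\le \epsilon$. For each $i$ one has
\[
\|\mtx{H}\v(t_i)-\mtx{H}\u(t_i)\|_\infty\;\le\;\|\mtx{H}\|_1\,\|\v(t_i)-\u(t_i)\|_1
\;\le\;\|\mtx{H}\|_1 \sup_{k}D_k^{(1)}(\u,\v)\;\le\;\epsilon,
\]
so $M_k(\u,\v)\le\epsilon$ holds for every $k$. Invoking \eqref{probvinsupportlowerboundeq} with the bound on $D_k^{(1)}(\u,\v)$ above gives
\[
\PP\bigl(\v\in S_k \,\big|\,\u\bigr)\;\ge\;\exp\!\Bigl(-\tfrac{D_k^{(1)}(\u,\v)\,\|\mtx{H}\|_1}{\epsilon}\Bigr)\;\ge\;e^{-1},
\]
uniformly in $k$.

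Finally, since $q(\u),q(\v)>0$ and $\|\u(t_i)-\vct{Y}_i\|_\infty=\|\vct{Z}_i\|_\infty\le\epsilon$ almost surely (with strict inequality with probability one), the true initial condition $\u$ is in $S_k$ almost surely. Hence whenever $\v\in S_k$ we have $\mathrm{diam}_1 S_k \ge \|\v-\u\|_1$, and taking expectations yields
\[
\E\bigl(\mathrm{diam}_1\mathrm{\,supp}\,\musm(\cdot\mid \vct{Y}_{0:k})\mid\u\bigr)\;\ge\;\PP(\v\in S_k\mid\u)\cdot\|\v-\u\|_1\;\ge\;\frac{1}{e}\cdot\frac{\epsilon}{C_U^{(1)}(\u)\,\|\mtx{H}\|_1},
\]
which is the claimed bound. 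I do not expect any genuine obstacle here: the hard work has already been packaged into Lemma~\ref{lemmasuppunif} and Assumption~\ref{assuniflow}, and the proof reduces to the judicious choice of $\|\v-\u\|_1$ that balances the two factors in the exponent. The only minor points worth double-checking are the passage from $L_1$ distances to the $L_\infty$ norm used in $M_k(\u,\v)$, and the fact that the posterior density at $\u$ is positive almost surely so $\u \in S_k$ with probability one.
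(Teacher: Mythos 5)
Your proposal is correct and follows essentially the same route as the paper's own proof: choose $\v\in U(\u)$ with $\|\v-\u\|_1=\epsilon/(C_U^{(1)}(\u)\|\mtx{H}\|_1)$, verify $M_k(\u,\v)\le\|\mtx{H}\|_1 D_k^{(1)}(\u,\v)\le\epsilon$ so that \eqref{probvinsupportlowerboundeq} applies, and conclude via $\PP(\v\in S_k\mid\u)\ge e^{-1}$ together with $\u\in S_k$. The only difference is that you spell out the $L_\infty$-versus-$L_1$ comparison and the inclusion of $\u$ in the support explicitly, which the paper leaves implicit.
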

\begin{proof}
Let $\v\in U(\u)$, then based on Lemma \ref{lemmasuppunif} and Assumption \ref{assuniflow}, we have that for $\epsilon\ge M_k(\u,\v)$, 
\begin{equation*}\E(\mathrm{diam}_1\mathrm{ supp} \musm(\cdot |\vct{Y}_0,\ldots, \vct{Y}_k)|\u)\ge \exp\left(-\frac{C_U^{(1)}(\u)\|\mtx{H}\|_1 \|\v-\u\|_{1}}{\epsilon}\right)\cdot \|\v-\u\|_1.
\end{equation*}
Using the fact that $\epsilon \le d_{\max}(\u)C_U^{(1)}(\u)\|\mtx{H}\|_1$, we can choose $\v\in U(\u)$ such that $\|\v-\u\|_1=\frac{\epsilon}{C_U^{(1)}(\u)\|\mtx{H}\|_1}$. For this choice of $\v$, we have 
\[\epsilon=\|\mtx{H}\|_1 C_U^{(1)}(\u)\|\|\v-\u\|_1\ge\|\mtx{H}\|_1  D_k^{(1)}(\u,\v) \ge M_k(\u,\v),\] so the result follows by the above inequality.
\end{proof}

After the smoother, now we show some lower bounds for the filter that are analogous to those we have obtained for the geometric model (see Theorem \ref{thmgeomodelfilter}). We use the following assumption.
\begin{ass}\label{assuniflowfilter}
Suppose that for the initial position $\u\in \R^d$, there are sets $\tU(\u,k)\subset \R^d$, called \emph{anti-leaf} sets, such that $\{\|\v(t_k)-\u(t_k)\|_1: \v\in \tU(\u,k)\}=[0,\tilde{d}_{\max}(\u,k)]$, and for every point $\v\in \tU(\u,k)$, we have
\begin{align}\label{eqasslowfilter1}
D^{(1)}_k(\u,\v)\le C_{\tU}(\u,k) \cdot \|\v(t_k)-\u(t_k)\|_1,
\end{align}
for some constants  $C_{\tU}(\u,k)$ and $\tilde{d}_{\max}(\u,k)$. Moreover, suppose that there are infinitely many indices $i_1<i_2<\ldots$ such that for every $i_j$, we have 
\begin{equation}\label{eqasslowfilter2}
 C_{\tU}(\u,i_j)\le C_{\tU}(\u) ,  \quad \text{ and }\quad \tilde{d}_{\max}(\u,i_j)\ge \tilde{d}_{\max}(\u),
\end{equation}
for some constants $C_{\tU}(\u)<\infty$ and  $\tilde{d}_{\max}(\u)>0$.
\end{ass}
This assumption essentially means that there are anti-leaf sets $\tU(\u,k)$, which are curves containing $\u$ such that for points $\v\in \tU(\u,k)$,  $\|\u(t)-\v(t)\|$ is typically growing in $t$ up to time point $t_k$. They behave in the exact opposite way when compared to leaf sets, hence the name anti-leaf set. This is a rather natural assumption if the system behaves chaotically, and the path of almost every two nearby points get far away eventually. The definition is somewhat similar to the definition of unstable sets (also called local unstable manifolds) used in the theory of dynamical systems (see e.g. page 18 of \cite{Shubglobalstability}).In the case of the geometric model, Theorem \ref{thmgeoantileafsets} has established the existence of anti-leaf sets, which also satisfy conditions \eqref{eqasslowfilter1} and \eqref{eqasslowfilter2} of the above assumption. A numerical test of this assumption for the Lorenz 63' model is included in Section \ref{seclowerboundssimulations}. 
\begin{theorem}[Lower bound on the diameter of the support of the filter]\label{thmlowerbndfilter}
Under Assumption \ref{assuniflowfilter}, if we have observation matrix $\mtx{H}$, the observation noise is uniformly distributed in $[-\epsilon,\epsilon]^{d_o}$, and the prior satisfies that $q(\v)>0$ for every $\v\in \cup_{k\in \N} \tU(\u,k)$, then for any $\epsilon\le \tilde{d}_{\max}(\u) C_{\tilde{U}}(\u) \|\mtx{H}\|_1$, and any $j\ge 1$, we have 
\[\E\left[\mathrm{diam}_1 \supp\mufi(\cdot|\vct{Y}_0,\ldots, \vct{Y}_{i_j})|\u\right]\ge \frac{1}{e}\cdot \frac{\epsilon}{C_{\tU}(\u) \|\mtx{H}\|_1}.\]
\end{theorem}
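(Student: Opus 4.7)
The plan is to follow the template of the proof of Theorem \ref{thmlowerboundsmoother}, evaluated at one of the distinguished indices $i_j$ supplied by Assumption \ref{assuniflowfilter}, and then to transfer the resulting bound on the smoother to the filter via the push-forward identity relating \eqref{eqmusm} and \eqref{eqmufi}. Fix $j \ge 1$ and work at index $i_j$. Since \eqref{eqasslowfilter2} gives $\tilde{d}_{\max}(\u,i_j) \ge \tilde{d}_{\max}(\u)$, the hypothesis $\epsilon \le \tilde{d}_{\max}(\u) C_{\tU}(\u) \|\mtx{H}\|_1$ implies
\[
\frac{\epsilon}{C_{\tU}(\u)\|\mtx{H}\|_1} \in [0, \tilde{d}_{\max}(\u, i_j)],
\]
so by the range property in Assumption \ref{assuniflowfilter} we can pick $\v \in \tU(\u, i_j)$ satisfying $\|\v(t_{i_j}) - \u(t_{i_j})\|_1 = \epsilon/(C_{\tU}(\u) \|\mtx{H}\|_1)$.

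For this $\v$, combining \eqref{eqasslowfilter1} with $C_{\tU}(\u,i_j) \le C_{\tU}(\u)$ yields
\[
D_{i_j}^{(1)}(\u, \v) \le C_{\tU}(\u, i_j)\, \|\v(t_{i_j}) - \u(t_{i_j})\|_1 \le \frac{\epsilon}{\|\mtx{H}\|_1}.
\]
The inequality $\|\mtx{H}(\v(t_i)-\u(t_i))\|_\infty \le \|\mtx{H}\|_1 \|\v(t_i)-\u(t_i)\|_1$ (since $\|\cdot\|_\infty \le \|\cdot\|_1$ and $\|\mtx{H}\|_1$ is the induced $L_1$ norm) gives $M_{i_j}(\u,\v) \le \|\mtx{H}\|_1 D_{i_j}^{(1)}(\u,\v) \le \epsilon$, so the hypothesis $\epsilon \ge M_{i_j}(\u,\v)$ of the lower bound \eqref{probvinsupportlowerboundeq} in Lemma \ref{lemmasuppunif} is met. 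Applying that lemma,
\[
\PP(\v \in S_{i_j}\,|\,\u) \ge \exp\!\left(-\frac{D_{i_j}^{(1)}(\u,\v)\|\mtx{H}\|_1}{\epsilon}\right) \ge e^{-1}.
\]

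To transfer the estimate to the filter, observe from \eqref{eqmusm}--\eqref{eqmufi} that the filter support at time $t_{i_j}$ is precisely $\Psi_{t_{i_j}}(S_{i_j})$: a point $\w \in \Psi_{t_{i_j}}(\BR)$ lies in $\supp \mufi(\cdot | \vct{Y}_0,\ldots, \vct{Y}_{i_j})$ iff $\Psi_{-t_{i_j}}(\w) \in S_{i_j}$, since the Jacobian factor in \eqref{eqmufi} is strictly positive. Applying this with $\w = \v(t_{i_j}) = \Psi_{t_{i_j}}(\v)$, the event $\{\v \in S_{i_j}\}$ coincides with $\{\v(t_{i_j}) \in \supp \mufi\}$, hence has probability at least $e^{-1}$. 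Because the observation noise satisfies $\|\vct{Z}_i\|_\infty \le \epsilon$ almost surely, $\u$ lies in $S_{i_j}$ a.s., so $\u(t_{i_j})$ lies in the filter support a.s. Consequently the diameter of the filter support (in the $L_1$ norm) is at least $\|\v(t_{i_j}) - \u(t_{i_j})\|_1 = \epsilon/(C_{\tU}(\u)\|\mtx{H}\|_1)$ on an event of probability at least $e^{-1}$, and taking expectations delivers the asserted bound.

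There is no real obstacle: the argument is mechanical once one realises (i) how Assumption \ref{assuniflowfilter} parametrises an anti-leaf point by its \emph{final} displacement $\|\v(t_{i_j})-\u(t_{i_j})\|_1$ (rather than by its initial displacement, as in the smoother case), and (ii) that the filter is simply the push-forward of the smoother under $\Psi_{t_{i_j}}$, so inclusion in the filter support is logically equivalent to inclusion in the smoother support. The only point that requires a brief justification is the last equivalence, which is immediate from the explicit formulas \eqref{eqmusm}--\eqref{eqmufi} and the fact that $\Psi_{t_{i_j}}$ is a diffeomorphism with finite, nonzero Jacobian on $\BR$ (as established in Section \ref{SecPreliminaries} via \eqref{ugradboundeq}).
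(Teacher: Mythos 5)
Your proof is correct and follows exactly the route the paper intends: pick $\v\in\tU(\u,i_j)$ with final displacement $\epsilon/(C_{\tU}(\u)\|\mtx{H}\|_1)$, apply the lower bound \eqref{probvinsupportlowerboundeq} of Lemma \ref{lemmasuppunif}, and push forward by $\Psi_{t_{i_j}}$. The paper's own proof is only a two-line sketch, and you have correctly filled in the details it leaves implicit (the verification $M_{i_j}(\u,\v)\le\epsilon$ and the identification of the filter support as $\Psi_{t_{i_j}}(S_{i_j})$).
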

\begin{proof}
The proof is analogous to the proof of Theorem \ref{thmlowerboundsmoother}. We choose $\v\in  \tU(\u,i_j)$ such that $\|\v(t_{i_j})-\u(t_{i_j})\|_1=\frac{\epsilon}{c_U(\u)  \|\mtx{H}\|_1}$ (by the assumption on $\epsilon$, this is possible), and the result follows from inequality \eqref{probvinsupportlowerboundeq} of Lemma \ref{lemmasuppunif}.
\end{proof}

For the geometric model, we have been able to explicitly characterise the limit of the support of the smoother as the number of observations tends to infinity (see Theorem \ref{thmcharacterisationgeo}). It is possible to generalise this result to other chaotic dynamical systems satisfying the following assumptions. 
\begin{ass}\label{assunifchar}
For any $\epsilon>0$, $s>0$, let
\begin{align} U(\u,\epsilon):=&\left\{\v\in \BR: \sum_{i=0}^{\infty} \|\v(t_i)-\u(t_i)\|_{\infty}<\infty, \sup_{i\ge 0}\|\v(t_i)-\u(t_i)\|_{\infty}<2\epsilon\right\}\label{Uuepsdefeq}\\
W(\u,\epsilon,s):=&\{\v\in \BR: \|\u-\v\|_{\infty}<2\epsilon, \text{ and there is some } \w\in U(\u,\epsilon) \text{ and }t\in [-s,s]\nonumber\\
&\text{ such that either }t\ge 0\text{  and  }\w(t)=\v \text{ or }t<0 \text{ and }\v(t)=\w\}.\label{Wuepstmaxdefeq}
\end{align}
We call $U(\u,\epsilon)$ as the \emph{$2\epsilon$-cropped leaf set of $\u$}, and $W(\u,\epsilon,s)$ the time shifted $2\epsilon$-cropped leaf set of $\u$. Suppose that there is some constant $v_{\min}(\u)>0$ such that 
\begin{equation}\label{eqdotutivmin}
\left\|\frac{d}{dt}\u(t_i)\right\|_{\infty}\ge v_{\min}(\u) \text{ for infinitely many }i\in \N.\end{equation}
Suppose that there is sequence of reals $(\rho_i(\u,\epsilon))_{i\in N}$ such that $\lim_{k\to \infty}\rho_k=0$, and for any $\v\in U(\u,\epsilon)$, we have 
\begin{equation}\label{eqsumrhok}\sum_{i=k}^{\infty} \|\v(t_i)-\u(t_i)\|_{\infty}\le \rho_k(\u,\epsilon).\end{equation}
Suppose that there is a constant $t_{\max}(\u,\epsilon)\in (0, \frac{v_{\min}(\u)}{6a_{\max} v_{\max}})$
such that for any $\v\notin W(\u,\epsilon,t_{\max}(\u,\epsilon))$, we have
\begin{equation}\label{eqnotinWuepstmax}\|\v(t_i)-\u(t_i)\|_{\infty}>2\epsilon\text{ for some }i\ge 0.
\end{equation}
\end{ass}
In the above assumption, $v_{\max}$ and $a_{\max}$ are defined according to \eqref{eqvmax} and \eqref{eqamax}. This assumption contains the essential properties of the dynamics that were used in the proof of Theorem \ref{thmcharacterisationgeo} for the geometric model.
In that case, the $2\epsilon$-cropped leaf set $U(\u,\epsilon)$ was defined in equation \eqref{geoUuepsiondefeq}, condition \eqref{eqdotutivmin} was implied by \eqref{vmingeodefeq},
and the condition \eqref{eqnotinWuepstmax} was proven in Lemma \ref{lemSinftyinWuepsilon}. 

The following result shows that under Assumption \ref{assunifchar}, as the number of observations tends to infinity, the support of the smoother gets concentrated around the $2\epsilon$-cropped leaf set $U(\u,\epsilon)$.
\begin{theorem}[Characterisation of the limit of the support of the smoother]\label{thmcharacterizationsmoother}
Suppose that Assumption \ref{assunifchar} holds, and that $q(\v)>0$ for every $\v\in U(\u,\epsilon)$.  Suppose that the observation matrix $\mtx{H}=\mtx{I}_{d\times d}$ and the observation errors are uniformly distributed on $[-\epsilon,\epsilon]^d$. Then
\begin{equation}\label{eqcharsuppsm1}\sup_{\v\in \mathrm{supp}\,\musm(\cdot|\vct{Y}_0,\ldots, \vct{Y}_k)} d(\v,   U(\u,\epsilon))\to 0 \text{ as }k\to \infty \text{ almost surely in the observations,}\end{equation}
where $d(\v,  U(\u,\epsilon))=\inf_{\w\in U(\u,\epsilon)}\|\v-\w\|$. Moreover, for every point $\v \in U(\u,\epsilon)$, we have 
\[\inf_{k\in \N}\PP(\left. \v\in \mathrm{supp}\,\musm(\cdot|\vct{Y}_0,\ldots, \vct{Y}_k)\right| \u)>0.\]
\end{theorem}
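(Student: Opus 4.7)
The plan is to closely mirror the proof of Theorem \ref{thmcharacterisationgeo}, replacing the geometric-model-specific arguments with the corresponding structural consequences of Assumption \ref{assunifchar}. As in that proof, since each $S_k := \mathrm{supp}\,\musm(\cdot|\vct{Y}_0,\ldots,\vct{Y}_k)$ is compact and $S_{k+1}\subset S_k$, showing \eqref{eqcharsuppsm1} is equivalent to showing $S_\infty := \cap_{k\ge 0}S_k \subset \ol{U(\u,\epsilon)}$ almost surely. With $\mtx{H}=\mtx{I}$, the triangle inequality combined with $\|\vct{Z}_i\|_\infty\le\epsilon$ and \eqref{eqnotinWuepstmax} shows deterministically that $S_\infty\subset W(\u,\epsilon,t_{\max}(\u,\epsilon))$: if some $\v\notin W(\u,\epsilon,t_{\max}(\u,\epsilon))$ had $\|\v(t_i)-\u(t_i)\|_\infty>2\epsilon$ for some $i$, then $\|\v(t_i)-\vct{Y}_i\|_\infty>\epsilon$, so $\v\notin S_k$ for $k\ge i$. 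It therefore suffices to exclude the time-shifted ``fringe'' of $W(\u,\epsilon,t_{\max}(\u,\epsilon))$.

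Define, for $0<s\le t_{\max}(\u,\epsilon)$, the sets $W_\pm(\u,\epsilon,s):=\{\v\in W(\u,\epsilon,t_{\max}(\u,\epsilon)): \pm\Delta(\v)>s\}$, where $\Delta(\v)$ is the time shift in \eqref{Wuepstmaxdefeq}. By \eqref{eqdotutivmin}, there are infinitely many indices $i$ at which $\|\tfrac{d}{dt}\u(t_i)\|_\infty\ge v_{\min}(\u)$. Each such $i$ singles out a coordinate $j_i\in\{1,\ldots,d\}$ and a sign $\sigma_i\in\{+,-\}$ with $\sigma_i\tfrac{d u_{j_i}}{dt}(t_i)\ge v_{\min}(\u)$; since there are only $2d$ possible $(j,\sigma)$, by pigeonhole some fixed pair $(j^\ast,\sigma^\ast)$ is realised along an infinite subsequence $(i_m)_{m\ge 1}$. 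For any $\v\in W_+(\u,\epsilon,s)$, writing $\v=\w(t)$ with $\w\in U(\u,\epsilon)$, $t=\Delta(\v)>s$, I decompose
\[v_{j^\ast}(t_{i_m})-u_{j^\ast}(t_{i_m}) \;=\; [u_{j^\ast}(t_{i_m}+t)-u_{j^\ast}(t_{i_m})]\;+\;[w_{j^\ast}(t_{i_m}+t)-u_{j^\ast}(t_{i_m}+t)].\]
A second-order Taylor expansion, together with $\|\J_\v(\tfrac{d\v}{dt})\|\le a_{\max}$ from \eqref{eqamax} and the choice $t_{\max}(\u,\epsilon)<v_{\min}(\u)/(6 a_{\max}v_{\max})$, gives $\sigma^\ast[u_{j^\ast}(t_{i_m}+t)-u_{j^\ast}(t_{i_m})]\ge \tfrac{s}{2} v_{\min}(\u)$. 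The second bracket is controlled using \eqref{eqpathdistancebound} applied over the interval of length $|t|\le t_{\max}(\u,\epsilon)$, giving $\|\w(t_{i_m}+t)-\u(t_{i_m}+t)\|_\infty\le e^{Gt_{\max}(\u,\epsilon)}\|\w(t_{i_m})-\u(t_{i_m})\|_\infty$, which, by \eqref{eqsumrhok}, tends to $0$ uniformly in $\w\in U(\u,\epsilon)$. Consequently, for all sufficiently large $m$ and uniformly in $\v\in W_+(\u,\epsilon,s)$, $\sigma^\ast(v_{j^\ast}(t_{i_m})-u_{j^\ast}(t_{i_m}))\ge \tfrac{s}{3}v_{\min}(\u)$.

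The key payoff is that a single noise event now excludes the entire set $W_+(\u,\epsilon,s)$ simultaneously: whenever $\sigma^\ast Z^{j^\ast}_{i_m}<\tfrac{s}{3}v_{\min}(\u)-\epsilon$, every such $\v$ satisfies $|v_{j^\ast}(t_{i_m})-\vct{Y}_{i_m}^{j^\ast}|>\epsilon$ and is therefore excluded from $S_{i_m}$. This event has probability $\min(1,\tfrac{s v_{\min}(\u)}{6\epsilon})>0$, independent of $m$; since $(Z^{j^\ast}_{i_m})_{m\ge 1}$ are independent, the second Borel--Cantelli lemma yields $S_\infty\cap W_+(\u,\epsilon,s)=\emptyset$ almost surely, and symmetrically for $W_-(\u,\epsilon,s)$. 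Since
\[W(\u,\epsilon,t_{\max}(\u,\epsilon))\setminus \ol{U(\u,\epsilon)}\;=\;\bigcup_{n\ge 1}\bigl(W_+(\u,\epsilon,t_{\max}(\u,\epsilon)/n)\cup W_-(\u,\epsilon,t_{\max}(\u,\epsilon)/n)\bigr),\]
a countable union gives $S_\infty\subset\ol{U(\u,\epsilon)}$ a.s., proving \eqref{eqcharsuppsm1}.

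For the second claim, by Lemma \ref{lemmasuppunif} with $\mtx{H}=\mtx{I}$,
\[\PP(\v\in S_k|\u)\;=\;\prod_{i=0}^k\prod_{j=1}^d\Bigl(1-\tfrac{|v_j(t_i)-u_j(t_i)|}{2\epsilon}\Bigr)_{\!+}.\]
For $\v\in U(\u,\epsilon)$, the defining strict inequality $\sup_{i\ge 0}\|\v(t_i)-\u(t_i)\|_\infty<2\epsilon$ yields some $\delta=\delta(\v)>0$ with $|v_j(t_i)-u_j(t_i)|\le 2\epsilon-\delta$ for all $i,j$. Using $1-x\ge \exp(-x/(1-x))$, each factor is at least $\exp(-|v_j(t_i)-u_j(t_i)|/\delta)$, so $\PP(\v\in S_k|\u)\ge \exp(-D_\infty^{(1)}(\u,\v)/\delta)$, which is strictly positive and independent of $k$ because $D_\infty^{(1)}(\u,\v)\le d\sum_{i\ge 0}\|\v(t_i)-\u(t_i)\|_\infty<\infty$ by \eqref{Uuepsdefeq}. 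The main obstacle in the whole argument is the uniform-in-$\v$ exclusion in the second paragraph: controlling the second bracket uniformly in $\w\in U(\u,\epsilon)$ is what forces both the smallness condition on $t_{\max}(\u,\epsilon)$ and the invocation of the summability estimate \eqref{eqsumrhok}.
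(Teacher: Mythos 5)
Your proposal is correct and follows essentially the same route as the paper's proof in Section \ref{secproofthmcharacterizationsmoother}: the same reduction of \eqref{eqcharsuppsm1} to $S_\infty\subset\ol{U(\u,\epsilon)}$, the same restriction to $W(\u,\epsilon,t_{\max}(\u,\epsilon))$ via \eqref{eqnotinWuepstmax}, the same exclusion of the fringes $W_\pm(\u,\epsilon,s)$ by an independent noise event of probability $\min(1,s v_{\min}(\u)/6\epsilon)$ at the indices from \eqref{eqdotutivmin}, and the same countable-union conclusion. Your only deviations are cosmetic: you fix a single coordinate and sign by pigeonhole where the paper lets them vary with the index, you obtain the key lower bound $\sigma^\ast(v_{j^\ast}(t_{i_m})-u_{j^\ast}(t_{i_m}))\ge \tfrac{s}{3}v_{\min}(\u)$ by combining the drift of $\u$ itself with a Gr\"onwall bound on $\w-\u$ rather than by lower-bounding $\tfrac{d}{dt}w_j$ uniformly as the paper does, and you spell out the final positivity claim (which the paper only sketches) via \eqref{probvinsupporteq}.
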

The proof of this theorem is similar to the proof of Theorem \ref{thmcharacterisationgeo}. It is included in Section \ref{secproofthmcharacterizationsmoother} of the Appendix.

\subsection{Lower bounds for Gaussian noise}\label{seclowerboundsgaussian}
In this section we generalise the results of the previous section to Gaussian noise. In this case, the quantity of interest will be the diameter of the support of the set of points whose likelihood is no less that $1/e$ times the likelihood of the true position. The following lemma is a key tool in this section.
\begin{lem}[Bounding the probability that a point has large likelihood]\label{lemmasuppgauss}
Let $\vct{Y}_i=\mtx{H}\u(t_i)+\vct{Z}_i$ be the noisy observations at time points $0\le t_i\le t_k$ obtained from \eqref{diffeqgeneralform} started at some initial point $\u\in \BR$. Suppose that observation errors satisfy that $\vct{Z}_i/\epsilon$ has $d_o$ dimensional standard Gaussian distribution for every $0\le i\le k$. Let 
\begin{align}
\label{Dk2uvdefeq}D_k^{(2)}(\u,\v)&:=\sum_{i=0}^k \|\Psi_{t_i}(\v)-\Psi_{t_i}(\u)\|_2^2, \text{ and }\\
D_k^{(2)}(\u,\v,\mtx{H})&:=\sum_{i=0}^k \|\mtx{H}\Psi_{t_i}(\v)-\mtx{H}\Psi_{t_i}(\u)\|_2^2\le \|\mtx{H}\|_2^2 D_k^{(2)}(\u,\v).
\end{align}
Suppose that $q(\u)>0$, then for any $\v\in \BR$,  we have
\begin{align}\label{probvinsupportgausseq}
&\PP\left[\left.\frac{\musm(\v |Y_0,\ldots, Y_k)}{\musm(\u |Y_0,\ldots, Y_k)}\ge \frac{q(\v)}{q(\u)}\cdot  \exp\left(-\frac{D_k^{(2)}(\u,\v,\mtx{H})}{2\epsilon^2}\right)\right|\u\right]\ge \frac{1}{2}, \text{ and}\\
\label{probvinsupportgaussfieq}
&\PP\left[\left.\frac{\mufi(\v(t_k) |Y_0,\ldots, Y_k)}{\mufi(\u(t_k) |Y_0,\ldots, Y_k)}\ge \frac{q(\v)}{q(\u)}\cdot  \frac{ \mathrm{det}(\mtx{J}\Psi_{t_k}(\u))}{ \mathrm{det}(\mtx{J}\Psi_{t_k}(\v))}\cdot\exp\left(-\frac{D_k^{(2)}(\u,\v,\mtx{H})}{2\epsilon^2}\right)\right|\u\right]\ge \frac{1}{2}.
\end{align}
\end{lem}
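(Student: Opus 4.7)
The plan is to reduce both claims to a single probabilistic statement about a mean-zero Gaussian random variable. First I would substitute $\vct{Y}_i = \mtx{H}\u(t_i)+\vct{Z}_i$ into the smoother formula \eqref{eqmusm} and form the ratio, so that the normalising constant $Z_k^{\mathrm{sm}}$ and the prior $q(\u)$ in the denominator cancel cleanly, giving
\[
\frac{\musm(\v|\vct{Y}_0,\ldots,\vct{Y}_k)}{\musm(\u|\vct{Y}_0,\ldots,\vct{Y}_k)} = \frac{q(\v)}{q(\u)}\prod_{i=0}^{k}\frac{\eta(\vct{Z}_i + \mtx{H}(\u(t_i)-\v(t_i)))}{\eta(\vct{Z}_i)}.
\]
With $\vct{Z}_i/\epsilon$ standard Gaussian, writing $\vct{w}_i := \mtx{H}(\v(t_i)-\u(t_i))$ and expanding $\|\vct{Z}_i - \vct{w}_i\|_2^2 - \|\vct{Z}_i\|_2^2 = \|\vct{w}_i\|_2^2 - 2\langle \vct{w}_i,\vct{Z}_i\rangle$ gives
\[
\frac{\musm(\v|\vct{Y}_0,\ldots,\vct{Y}_k)}{\musm(\u|\vct{Y}_0,\ldots,\vct{Y}_k)} = \frac{q(\v)}{q(\u)}\exp\!\left(-\frac{D_k^{(2)}(\u,\v,\mtx{H})}{2\epsilon^2}\right)\exp\!\left(\frac{1}{\epsilon^2}\sum_{i=0}^k \langle \vct{w}_i,\vct{Z}_i\rangle\right).
\]

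The inequality in \eqref{probvinsupportgausseq} is therefore equivalent to the event $\{S\ge 0\}$, where $S := \sum_{i=0}^k \langle \vct{w}_i,\vct{Z}_i\rangle$. Since the $\vct{Z}_i$ are independent mean-zero Gaussians and the $\vct{w}_i$ are deterministic functions of $\u,\v$ alone (not of the noise), $S$ is a centered Gaussian (possibly degenerate at $0$, if $\vct{w}_i = 0$ for all $i$). In either case, by symmetry $\PP(S\ge 0 \mid \u)\ge 1/2$, which is exactly what is needed.

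For the filter claim \eqref{probvinsupportgaussfieq}, I would apply the same strategy to \eqref{eqmufi}, evaluated at $\Psi_{t_k}(\v)$ and $\Psi_{t_k}(\u)$ respectively. Using $\Psi_{-t_k}(\Psi_{t_k}(\v)) = \v$ and $\Psi_{t_i-t_k}(\Psi_{t_k}(\v)) = \Psi_{t_i}(\v)$, the likelihood factors collapse to the same product as in the smoother case, while the Jacobian determinants combine via \eqref{detalternativeq} to yield the extra factor $\det(\mtx{J}\Psi_{t_k}(\u))/\det(\mtx{J}\Psi_{t_k}(\v))$ that appears in the statement. The remaining probabilistic step is identical to the one above.

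There is essentially no obstacle: the only thing to check is that the Gaussian density $\eta$ factorises cleanly so that the cross-term in the quadratic expansion is indeed linear in $\vct{Z}_i$, after which the $1/2$ bound is immediate from symmetry. A minor bookkeeping point is the case where $D_k^{(2)}(\u,\v,\mtx{H})=0$ (so all $\vct{w}_i$ vanish); here $S\equiv 0$ and both sides of the claimed inequality hold with probability one.
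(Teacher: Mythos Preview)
Your proposal is correct and follows exactly the same route as the paper's proof: expand the likelihood ratio using the Gaussian density, isolate the deterministic term $D_k^{(2)}(\u,\v,\mtx{H})$ and the cross term $S=\sum_i\langle \mtx{H}(\v(t_i)-\u(t_i)),\vct{Z}_i\rangle$, and conclude from the fact that $S$ is a mean-zero Gaussian so $\PP(S\ge 0)\ge 1/2$. Your treatment is slightly more careful than the paper's in that you note the degenerate case $S\equiv 0$ and spell out how the Jacobian factor arises for the filter via \eqref{detalternativeq}.
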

\begin{proof}
By the definition of the smoothing distribution $\musm$, we have
\begin{align*}&\frac{\musm(\v |Y_0,\ldots, Y_k)}{\musm(\u |Y_0,\ldots, Y_k)}=\frac{q(\v)}{q(\u)}\cdot \exp\left( -\frac{1}{2\epsilon^2}\cdot \sum_{i=0}^k \left(\|\mtx{H}\v(t_i)-Y_i\|^2-\|\mtx{H}\u(t_i)-Y_i\|^2\right)\right)\\
&=\frac{q(\v)}{q(\u)}\cdot \exp\left( -\frac{1}{2\epsilon^2}\cdot\sum_{i=0}^k \left(\|\mtx{H}\v(t_i)-\mtx{H}\u(t_i)\|^2+2\left<\mtx{H}\v(t_i)-\mtx{H}\u(t_i),\vct{Z}_i\right>\right)\right)\\
&=\frac{q(\v)}{q(\u)}\cdot \exp\left( -\frac{1}{2\epsilon^2}\cdot \left(D_k^{(2)}(\u,\v,\mtx{H})+2\sum_{i=0}^k \left<\mtx{H}\v(t_i)-\mtx{H}\u(t_i),\vct{Z}_i\right>\right)\right),
\end{align*}
and \eqref{probvinsupportgausseq} follows from the fact that $\sum_{i=0}^k \left<\mtx{H}\v(t_i)-\mtx{H}\u(t_i),\vct{Z}_i\right>$ is a Gaussian random variable with mean 0. The proof of \eqref{probvinsupportgaussfieq} is similar.
\end{proof}

\begin{ass}\label{assgausslow}
Suppose that there is a set of points $U(\u)\subset \BR$ called the \emph{leaf set} of $\u$ such that
$\{\|\v-\u\|_2: \v\in U(\u) \}=[0, d_{\max}^{(2)}(\u)]$ for some $d_{\max}^{(2)}(\u)>0$, and that there is a finite constant $C_U^{(2)}(\u)$ such that for every $\v\in U(\u)$,
\begin{equation}\label{assgaussloweq}
\sup_{k\in \N}D_k^{(2)}(\u,\v)\le  C_U^{(2)}(\u) \|\v-\u\|_2^2.
\end{equation}
\end{ass}
Similarly to Assumption \ref{assuniflow}, this assumption essentially means that there exists a leaf set $U(\u)$, which is a curve containing $\u$ such that for every point $\v\in U(\u)$, $\|\u(t)-\v(t)\|\to 0$ as $t\to \infty$  (at a sufficiently quick rate). In the case of the geometric model, based on \eqref{eqpathdistance}, one can see that the leaf set $U(\u)$ defined in Theorem \ref{thmlowerboundsmoother1} satisfies the condition \eqref{assgaussloweq} of the above assumption  for any $h>0$. A numerical test of this assumption is included for the Lorenz 63' and 96' models in Section \ref{seclowerboundssimulations}. The following theorem lower bounds the diameter of the set of points whose likelihood is not much smaller than the likelihood of the true initial position.
\begin{theorem}[Lower bound on the diameter of the set of high likelihood for the smoother]\label{thmgausslowerboundsmoother}
Suppose that Assumption \ref{assgausslow} holds, and that the density of the prior $q$ is continuous at the point $\u$, and $q(\v)>0$ for every $\v\in U(\u)$. Then for $\epsilon$ sufficiently small,
\begin{equation}\label{diamlowboundgausseq}
\E\left[\left.\mathrm{diam}_2\,\mathrm{supp}\left\{\v\in \R^d: \frac{\musm(\v |\vct{Y}_0,\ldots, \vct{Y}_k)}{\musm(\u |\vct{Y}_0,\ldots, \vct{Y}_k)}\ge \frac{1}{e}\right\} \right|\u\right]\ge \frac{1}{2\|\mtx{H}\|_2\sqrt{C_U^{(2)}(\u)}}\cdot \epsilon,
\end{equation}
where $\mathrm{diam}_2\,\mathrm{supp}$ denotes diameter of the support with respect to the Euclidean distance.
\end{theorem}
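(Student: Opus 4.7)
The plan is to mirror the uniform-noise argument of Theorem \ref{thmlowerboundsmoother}, but use the Gaussian bound from Lemma \ref{lemmasuppgauss} in place of the uniform one, and exploit the leaf set from Assumption \ref{assgausslow}. The main observation is that the target set contains the true initial position $\u$ (since the likelihood ratio there equals $1 \ge 1/e$), so any second point at which the ratio stays above $1/e$ automatically forces the diameter to be at least the distance from that point to $\u$. Consequently it suffices to exhibit a single point $\v^{*}\in U(\u)$, at a suitable distance $r$ from $\u$, that is included in the high-likelihood set with probability at least $1/2$.

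Concretely, first I would set
\[
r := \frac{\epsilon}{\|\mtx{H}\|_{2}\sqrt{C_{U}^{(2)}(\u)}},
\]
and pick $\epsilon$ small enough that $r\le d_{\max}^{(2)}(\u)$, which by Assumption \ref{assgausslow} allows me to choose $\v^{*}\in U(\u)$ with $\|\v^{*}-\u\|_{2}=r$. Applying the leaf-set bound \eqref{assgaussloweq} together with $D_{k}^{(2)}(\u,\v,\mtx{H})\le \|\mtx{H}\|_{2}^{2}\, D_{k}^{(2)}(\u,\v)$ gives
\[
D_{k}^{(2)}(\u,\v^{*},\mtx{H})\le \|\mtx{H}\|_{2}^{2}\, C_{U}^{(2)}(\u)\, r^{2} \;=\; \epsilon^{2},
\]
uniformly in $k$. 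Plugging this into \eqref{probvinsupportgausseq} of Lemma \ref{lemmasuppgauss} yields
\[
\PP\!\left[\left.\frac{\musm(\v^{*}\mid \vct Y_{0:k})}{\musm(\u\mid \vct Y_{0:k})}\;\ge\;\frac{q(\v^{*})}{q(\u)}\, e^{-1/2}\,\right|\u\right]\;\ge\;\frac{1}{2}.
\]

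Next I would use the continuity of $q$ at $\u$: since $\|\v^{*}-\u\|_{2}=r\to 0$ as $\epsilon\to 0$, one has $q(\v^{*})/q(\u)\to 1$, so for all sufficiently small $\epsilon$ it follows that $q(\v^{*})/q(\u)\ge e^{-1/2}$, and hence the ratio on the event above is at least $e^{-1/2}\cdot e^{-1/2}=1/e$. On that event both $\u$ and $\v^{*}$ lie in the high-likelihood set, so its diameter is bounded below by $\|\v^{*}-\u\|_{2}=r$. Taking expectations and using the probability bound $1/2$ gives
\[
\E\bigl[\mathrm{diam}_{2}\,\mathrm{supp}\{\cdot\}\mid\u\bigr]\;\ge\;\frac{r}{2}\;=\;\frac{\epsilon}{2\|\mtx{H}\|_{2}\sqrt{C_{U}^{(2)}(\u)}},
\]
which is the claimed bound.

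There is no genuine obstacle here; the argument is essentially a one-point probabilistic lower bound combined with the leaf-set geometry. The only care points are (i) choosing $r$ so that the exponent in Lemma \ref{lemmasuppgauss} is exactly $-1/2$ rather than $-1$, leaving room to absorb the prior ratio $q(\v^{*})/q(\u)$ via continuity, and (ii) making the phrase \emph{$\epsilon$ sufficiently small} quantitative by requiring both $r\le d_{\max}^{(2)}(\u)$ and $q(\v^{*})/q(\u)\ge e^{-1/2}$ for all $\v^{*}$ in a small neighbourhood of $\u$.
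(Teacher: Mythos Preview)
Your proposal is correct and follows essentially the same approach as the paper's proof: choose $\v\in U(\u)$ at distance $r=\epsilon/(\|\mtx{H}\|_2\sqrt{C_U^{(2)}(\u)})$, use Assumption \ref{assgausslow} to bound $D_k^{(2)}(\u,\v,\mtx{H})\le \epsilon^2$, invoke Lemma \ref{lemmasuppgauss} for the $1/2$ probability, and absorb the prior ratio via continuity of $q$ so that $q(\v)/q(\u)\ge e^{-1/2}$. The paper's proof is simply a terser statement of exactly these steps.
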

\begin{proof}
We choose $\v\in U(\u)$ such that $\|\u-\v\|_2=\frac{1}{\|\mtx{H}\|_2\sqrt{C_U^{(2)}(\u)}}\cdot \epsilon$, this is possible if $\epsilon\le d_{\max}^{(2)}(\u) \|\mtx{H}\|_2 \sqrt{C_U^{(2)}(\u)}$. By the continuity of $q$, we have $\frac{q(\v)}{q(\u)}\ge \frac{1}{\sqrt{e}}$ for $\epsilon$ sufficiently small, and the result follows from Lemma \ref{lemmasuppgauss}.
\end{proof}
We end this section by stating a similar result for the filtering distribution. We are going to use the following assumption.
\begin{ass}\label{assgausslowfilter}
Let $\u$ be the initial position, and for any $k\in \N$, define the sets 
\[S_{\J}(\u,k):=\{\v\in \R^d: \mathrm{det}(\mtx{J}\Psi_{t_k}(\v))=\mathrm{det}(\mtx{J}\Psi_{t_k}(\u))\},\]
where $\mtx{J}\Psi_{t_k}(\v)$ is the $d\times d$ Jacobian matrix (defined in Section \ref{SecPreliminaries}). 

Suppose that for the initial position $\u\in \R^d$, there are sets $\tU(\u,k)\subset S_{\J}(\u,k)$, called \emph{anti-leaf} sets, such that $\{\|\v(t_k)-\u(t_k)\|_2: \v\in \tU(\u,k)\}=[0,\tilde{d}_{\max}^{(2)}(\u,k)]$, and for every point $\v\in \tU(\u,k)$, we have
\begin{align*}
D_k^{(2)}(\u,\v)\le C_{\tU}^{(2)}(\u,k) \cdot \|\v(t_k)-\u(t_k)\|_2^2,
\end{align*}
for some constants  $C_{\tU}^{(2)}(\u,k)$ and $\tilde{d}_{\max}^{(2)}(\u,k)$. Moreover, suppose that there are infinitely many indices $i_1<i_2<\ldots$ such that for every $i_j$, we have 
\[ C_{\tU}^{(2)}(\u,t_{i_j})\le C_{\tU}^{(2)}(\u),  \quad \text{ and }\quad \tilde{d}_{\max}^{(2)}(\u,t_{i_j})\ge \tilde{d}_{\max}^{(2)}(\u),\]
for some constants $C_{\tU}^{(2)}(\u)<\infty$ and  $\tilde{d}_{\max}^{(2)}(\u)>0$.
\end{ass}
This assumption is similar to Assumption \ref{assuniflowfilter} that we had in the uniform case. However, it also includes the restriction that $\tU(\u,k)\subset S_{\J}(\u,k)$, i.e. the anti-leaf sets should be included in the level set of the determinant of the Jacobian. This is necessary because the determinant of the Jacobian can have a large influence on the likelihood of the filter. Note that in general the set $S_{\J}(\u,k)$ is a $d-1$ dimensional manifold which satisfies that its surface is perpendicular to the gradient $\grad\mathrm{det}(\mtx{J}\Psi_{t_k}(\v))$ at each point $\v$.  The following theorem shows a lower bound for the filter under this assumption.
\begin{theorem}[Lower bound on the diameter of the set of high likelihood for the filter]
Suppose that Assumption \ref{assgausslowfilter} holds, and that the density of the prior $q$ is continuous at the point $\u$, and $q(\v)>0$ for every $\v\in \cup_{k\in \N} \tU(\u,k)$. Then for $\epsilon$ sufficiently small, for any $j\ge 1$, we have
\begin{equation}\label{diamlowboundgaussfiltereq}
\E\left[\left.\mathrm{diam}_2\,\mathrm{supp}\left\{\v\in \R^d: \frac{\mufi(\v |\vct{Y}_0,\ldots, \vct{Y}_{i_j})}{\mufi(\u(t_{i_j}) |\vct{Y}_0,\ldots, \vct{Y}_{i_j})}\ge \frac{1}{e}\right\} \right|\u\right]\ge \frac{1}{2\|\mtx{H}\|_2\sqrt{c_U^{(2)}(\u)}}\cdot \epsilon,
\end{equation}
where $\mathrm{diam}_2\,\mathrm{supp}$ denotes diameter of the support with respect to the Euclidean distance.
\end{theorem}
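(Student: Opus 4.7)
The plan is to mimic the argument of Theorem \ref{thmgausslowerboundsmoother} for the smoother, but replace Assumption \ref{assgausslow} with Assumption \ref{assgausslowfilter} and use the filter-side bound \eqref{probvinsupportgaussfieq} of Lemma \ref{lemmasuppgauss} in place of \eqref{probvinsupportgauss} eq. Working at one of the good observation times $t_{i_j}$ guaranteed by the assumption, I will exhibit a single point $\v \in \tU(\u,i_j)$ whose image $\v(t_{i_j})$ lies, with probability at least $\tfrac12$, in the sublevel set appearing on the left-hand side of \eqref{diamlowboundgaussfiltereq}, and whose distance to $\u(t_{i_j})$ is exactly the claimed bound; taking expectation then yields the theorem.

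Concretely, first I would choose $\v \in \tU(\u,i_j)$ satisfying
\[
\|\v(t_{i_j})-\u(t_{i_j})\|_2 \;=\; \frac{\epsilon}{\|\mtx{H}\|_2\sqrt{C_{\tU}^{(2)}(\u)}},
\]
which is possible for $\epsilon$ sufficiently small because Assumption \ref{assgausslowfilter} provides $\tilde{d}_{\max}^{(2)}(\u,i_j)\ge \tilde{d}_{\max}^{(2)}(\u)>0$ together with the range condition on $\{\|\v(t_{i_j})-\u(t_{i_j})\|_2:\v\in \tU(\u,i_j)\}$. Using the inequality in Assumption \ref{assgausslowfilter} and the trivial bound $D_{i_j}^{(2)}(\u,\v,\mtx{H})\le \|\mtx{H}\|_2^2\, D_{i_j}^{(2)}(\u,\v)$, I then get
\[
D_{i_j}^{(2)}(\u,\v,\mtx{H})\;\le\;\|\mtx{H}\|_2^2 \cdot C_{\tU}^{(2)}(\u)\cdot \|\v(t_{i_j})-\u(t_{i_j})\|_2^2 \;=\; \epsilon^2,
\]
so the exponential factor on the right-hand side of \eqref{probvinsupportgaussfieq} is at least $e^{-1/2}$.

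Next I handle the two auxiliary factors that appear in \eqref{probvinsupportgaussfieq} beyond the exponential. The determinant ratio disappears because Assumption \ref{assgausslowfilter} forces $\v\in \tU(\u,i_j)\subset S_{\J}(\u,i_j)$, so by the definition of $S_{\J}$ we have $\mathrm{det}(\mtx{J}\Psi_{t_{i_j}}(\v))=\mathrm{det}(\mtx{J}\Psi_{t_{i_j}}(\u))$ and the ratio equals $1$. For the prior ratio, continuity of $q$ at $\u$ gives $q(\v)/q(\u)\ge e^{-1/2}$ once $\|\v-\u\|_2$ (hence, by \eqref{eqpathdistancebound}, $\|\v(t_{i_j})-\u(t_{i_j})\|_2$) is small enough, which is guaranteed for $\epsilon$ small. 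Combining the three factors on the event provided by \eqref{probvinsupportgaussfieq},
\[
\frac{\mufi(\v(t_{i_j})\mid \vct{Y}_{0:i_j})}{\mufi(\u(t_{i_j})\mid \vct{Y}_{0:i_j})}\;\ge\; e^{-1/2}\cdot 1 \cdot e^{-1/2} \;=\; e^{-1},
\]
with probability at least $1/2$.

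On this event, the point $\v(t_{i_j})$ lies in the level set whose diameter appears in \eqref{diamlowboundgaussfiltereq}, and $\u(t_{i_j})$ does so trivially, so the (random) diameter is at least $\|\v(t_{i_j})-\u(t_{i_j})\|_2 = \epsilon/(\|\mtx{H}\|_2\sqrt{C_{\tU}^{(2)}(\u)})$. Taking expectations yields the factor $1/2$, giving the stated lower bound. The only delicate point is verifying that the determinant-of-Jacobian ratio equals one, and this is exactly what the restriction $\tU(\u,k)\subset S_{\J}(\u,k)$ in Assumption \ref{assgausslowfilter} was designed to supply; without that restriction one would need an extra perturbation estimate for the determinant, which is the reason the assumption is stated in its particular form.
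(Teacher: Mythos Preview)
Your proof takes essentially the same route as the paper's: choose $\v\in \tU(\u,i_j)$ with $\|\v(t_{i_j})-\u(t_{i_j})\|_2=\epsilon/(\|\mtx{H}\|_2\sqrt{C_{\tU}^{(2)}(\u)})$ and invoke \eqref{probvinsupportgaussfieq}. You have in fact filled in more detail than the paper (the determinant ratio via $\tU(\u,i_j)\subset S_{\J}(\u,i_j)$ is left implicit there).

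One small correction on the prior-ratio step. You invoke \eqref{eqpathdistancebound} to pass between $\|\v-\u\|_2$ and $\|\v(t_{i_j})-\u(t_{i_j})\|_2$, but that inequality only gives $\|\v-\u\|_2\le e^{G t_{i_j}}\|\v(t_{i_j})-\u(t_{i_j})\|_2$, and since $t_{i_j}\to\infty$ this bound is not uniform in $j$; the statement, as written, requires a single small-$\epsilon$ threshold valid for all $j$. The fix is already contained in Assumption \ref{assgausslowfilter}: since $\|\v-\u\|_2^2=\|\v(t_0)-\u(t_0)\|_2^2\le D_{i_j}^{(2)}(\u,\v)\le C_{\tU}^{(2)}(\u)\,\|\v(t_{i_j})-\u(t_{i_j})\|_2^2$, one gets $\|\v-\u\|_2\le \epsilon/\|\mtx{H}\|_2$ uniformly in $j$, and then continuity of $q$ at $\u$ yields $q(\v)/q(\u)\ge e^{-1/2}$ for all $j$ once $\epsilon$ is below a single threshold.
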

\begin{proof}
The proof is analogous to the proof of Theorem \ref{thmgausslowerboundsmoother}. We choose $\v\in  \tU(\u,i_j)$ such that $\|\v(t_{i_j})-\u(t_{i_j})\|_2=\frac{\epsilon}{\|\mtx{H}\|_2\sqrt{C_{\tU}^{(2)}(\u)}}$ (this is possible when $\epsilon\le \tilde{d}_{\max}^{(2)}(\u) \|\mtx{H}\|_2 \sqrt{  C_{\tU}^{(2)}(\u)}$), and the result follows from \eqref{probvinsupportgaussfieq} of Lemma \ref{lemmasuppgauss} and the fact that $q$ is continuous in $\vct{u}$.
\end{proof}

\subsection{Numerical illustration for the Lorenz 63' and Lorenz 96' models}\label{seclowerboundssimulations}
In this section, we present some numerical evidence that supports the assumptions we have made in Sections \ref{seclowerboundsuniform}-\ref{seclowerboundsgaussian}. First, we will treat the assumptions related to the smoother, and then the assumptions related to the filter.

\subsubsection{Assumptions for the smoother}
In the following figures, we will provide numerical evidence about the existence of the leaf set $U(\vct{u})$ with properties required by Assumptions \ref{assuniflow} and \ref{assgausslow}, for the Lorenz 63' and the 5 dimensional Lorenz 96' models (see Section \ref{secapplicationlorenz96} for a definition of the Lorenz 96' model).

In the case of the Lorenz 63' model with classical parameter values (\eqref{lorenz63eq1}-\eqref{lorenz63eq3}), for starting point $\vct{u}=(1,2,3)$, an approximation of the leaf set $U(\vct{u})$ is constructed  as follows. We first simulate $\u(t)$ for time $0\le t\le 5$. After this, we sample 100 points from the small neigbourhood $\{\v: \|\v-\u(5)\|_{\infty}\le 10^{-31}\}$, and run them backwards in time until time point $0$. The points we have obtained this way are the small black circles shown on Figure \ref{fig63sm:sub1}.  We repeat this procedure by simulating $\u(t)$ up to time 10, sampling 100 points from the small neigbourhood $\{\v: \|\v-\u(10)\|_{\infty}\le 10^{-62}\}$, and running them backwards in time until time point $0$. These points are shown with grey circles on Figure \ref{fig63sm:sub1}. Finally, the initial point $\u$ is denoted by a big black circle.
As we can see, the grey and black points seem to be part of the same curve, arguably an approximation of the leaf set $U(\vct{u})$. For points $\v$ that are on this curve, $\|\v(t)-\u(t)\|_1$ decreases very quickly in $t$ ( typically at exponential rate). 

As $h\to 0$, and $kh\to t$ for some $t>0$, the sums $D_k^{(1)}(\u,\v)$ and $D_k^{(2)}(\u,\v)$ satisfy that
\[h D_k^{(1)}(\u,\v)\to \int_{s=0}^t \|\v(s)-\u(s)\|_1 ds, \text{ and }h D_k^{(2)}(\u,\v)\to \int_{s=0}^t \|\v(s)-\u(s)\|_2^2 ds.\]

Figures \ref{fig63sm:sub2} and \ref{fig63sm:sub3} plot these integrals up to time $t=5$ started from points $\v$ on our approximation of $U(\vct{u})$, as a function of $\|\v-\u\|_1$ and $\|\v-\u\|_2^2$, respectively.
As we can see, these plots are approximately linear, suggesting that Assumptions \ref{assuniflow} and \ref{assgausslow} are reasonable. This is not a rigorous proof of Assumptions \ref{assuniflow} and \ref{assgausslow} for $\vct{u}=(1,2,3)$, since they concern the supremum for $k\in \N$, and we only look at $\u(t)$ for $0\le t\le 5$. However, by rigorous computations this argument can be extended to imply that the lower bounds of Theorems  \ref{thmlowerboundsmoother} and \ref{thmgausslowerboundsmoother} hold for $\vct{u}=(1,2,3)$ and $0\le t_k\le 5$.

We repeat this same procedure for the 5 dimensional Lorenz 96' model (see Section \ref{secapplicationlorenz96}), started from $\u=(1,2,3,4,5)$. First we simulate $\u(t)$ up to time 5, and 100 points sampled from  $\{\v(5): \|\v(5)-\u(5)\|_{\infty}\le 10^{-32}\}$ are ran backwards until time 0, and then we simulate $\u(t)$ up to time 10, and 100 points sampled from  $\{\v(10): \|\v(10)-\u(10)\|_{\infty}\le 2\cdot 10^{-59}\}$ are ran backwards until time 0. The first 3 coordinates of these are  illustrated by black, and grey points, respectively, on Figure \ref{fig96sm:sub1}. These again seem to be on the same curve (and we obtain similar results if we choose different coordinates), which is an approximation of $U(\vct{u})$. Figures \ref{fig96sm:sub2} and \ref{fig96sm:sub3} illustrate the integrals $ \int_{s=0}^t \|\v(s)-\u(s)\|_1 ds$, and $\int_{s=0}^t \|\v(s)-\u(s)\|_2^2 ds$ up to time $t=5$. As we can see, these are again approximately linear, in good accordance with Assumptions \ref{assuniflow} and \ref{assgausslow}. 

Note that for higher dimensional systems, it is likely that the  stable manifold is no longer a curve but a higher dimensional manifold instead. In such situations, to check our assumptions numerically, for some $T>0$ one can sample $\v(T)$ randomly from a small line segment containing $\u(T)$, and ran it backwards in time to time $0$. The resulting curve set of points $\v(0)$ will be a one dimensional curve (a subset of the stable manifold) that can be chosen as the leaf set $U(\vct{u})$ .

\subsubsection{Assumptions for the filter}
We will now look at Assumption \ref{assuniflowfilter} for the filter, in the case of the Lorenz 63' equations. We construct a possible choice of the anti-leaf sets $\tU(\u,k)$ as follows. 
For $t_k$ fixed, we find the direction of $\v-\u$ where $\frac{\|\v(t_k)-\u(t_k)\|}{\|\v-\u\|}$ is maximal, for $\v-\u$ infinitesimally small (this can be done by computing the Jacobian matrix, and finding its eigenvector corresponding to its maximal eigenvalue). After this, we choose $\tU(\u,k)$ as a small line segment started from $\u$ along this direction. We choose 20 points on this segment (of equal distance between neighbouring points), and run the Lorenz 63' equations up to time $t_k$ started at these points, and evaluate the differences $\int_{s=0}^{t_k} \|\v(s)-\u(s)\|_1 ds$ (approximating $h D_k^{(1)}(\u,\v)$ when $h$ is small). In the case of $t_k=9.2$, for starting point $\u=(1,2,3)$, the value of these integrals is plotted as a function of $\|\v(t_k)-\u(t_k)\|$ on Figure \ref{fig63fi:sub1}. As we can see, this is approximately linear up to a certain distance, and thus \eqref{eqasslowfilter1} holds with $C_{\tU}(\u,k)$ being close to the slope of the linear part. 
We have repeated this experiment again for $t_k\in \{0.2, 0.4, \ldots, 10\}$, and plotted the approximate values of $C_{\tU}(\u,k)$ on Figure \ref{fig63fi:sub2}. In each time point, the constant $\tilde{d}_{\max}(\u,k)$ can be chosen to be greater than $1$. As we can see from this figure, the constant $C_{\tU}(\u,k)$ oscillates and does not seem to tend to infinity as $k$ tends to infinity, in accordance with Assumption \ref{assuniflowfilter}.
 
\begin{figure}
\begin{subfigure}{\linewidth}
\centering
\includegraphics[width=0.5\linewidth]{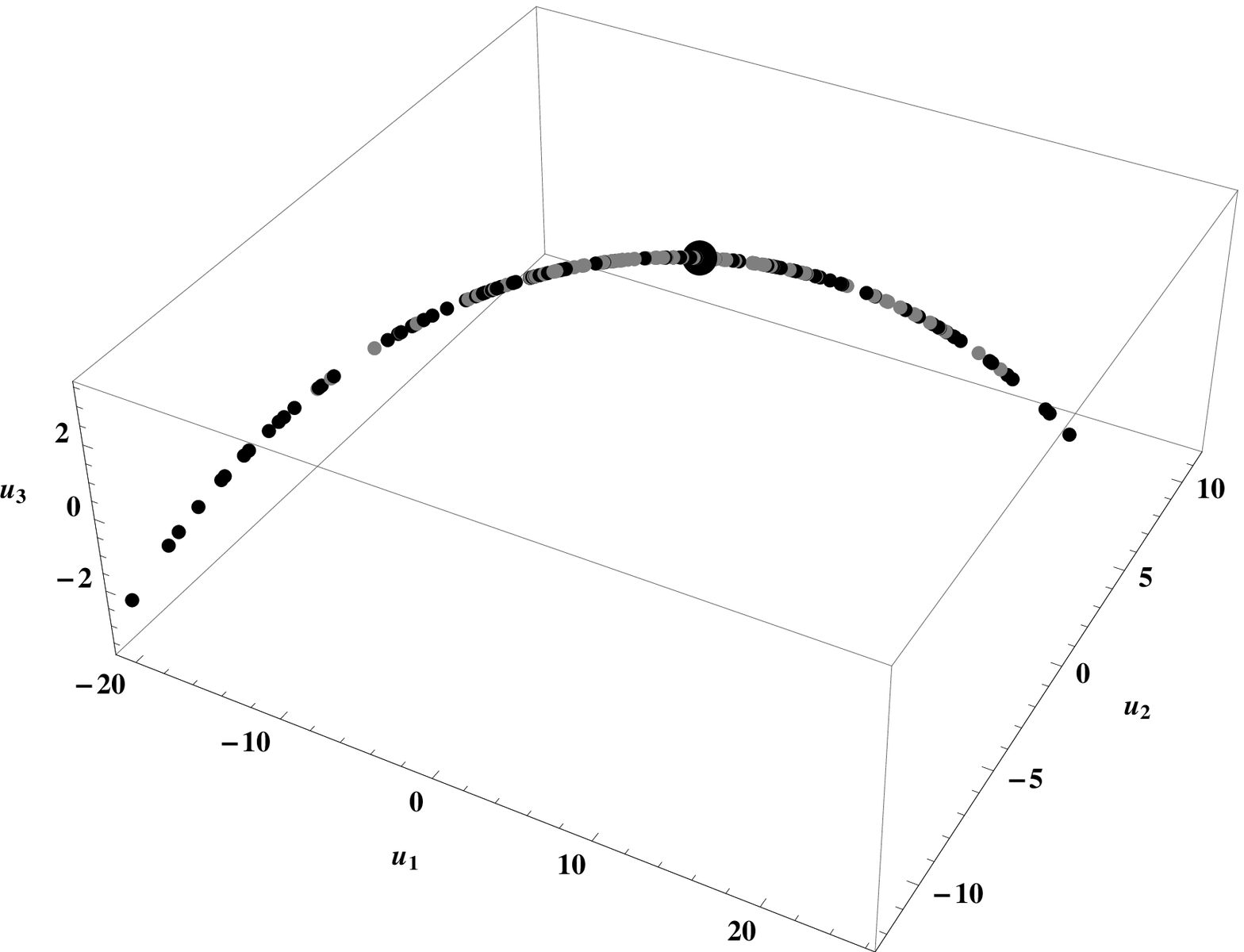}
\caption{}
\label{fig63sm:sub1}
\end{subfigure}%
\\[1ex] 
\begin{subfigure}{.5\linewidth}
\centering
\includegraphics[width=0.8\linewidth]{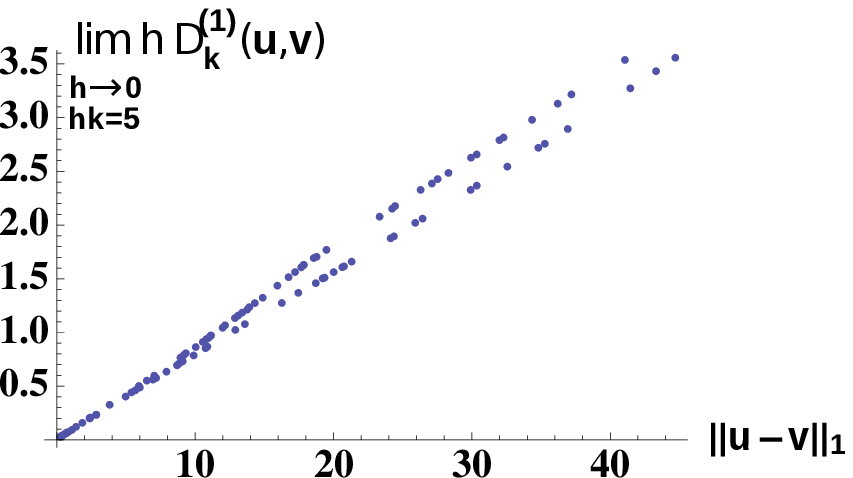}%scale=1, bb=0 0 260 142, 
\caption{}
\label{fig63sm:sub2}
\end{subfigure}
\begin{subfigure}{0.5\linewidth}
\centering
\includegraphics[width=0.8\linewidth]{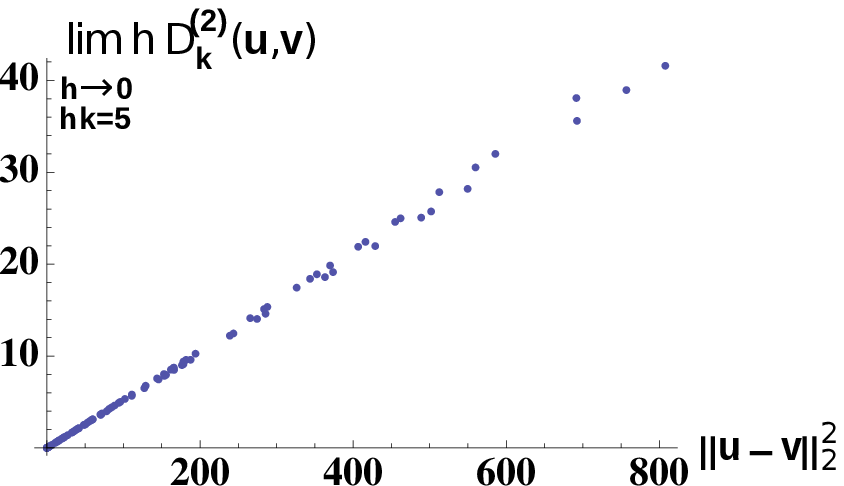}%scale=1, bb=0 0 260 142, 
\caption{}
\label{fig63sm:sub3}
\end{subfigure}
\caption{Illustration of the leaf set $U(\vct{u})$ and its properties for the Lorenz 63' model }
\begin{subfigure}{\linewidth}
\centering
\includegraphics[width=0.5\linewidth]{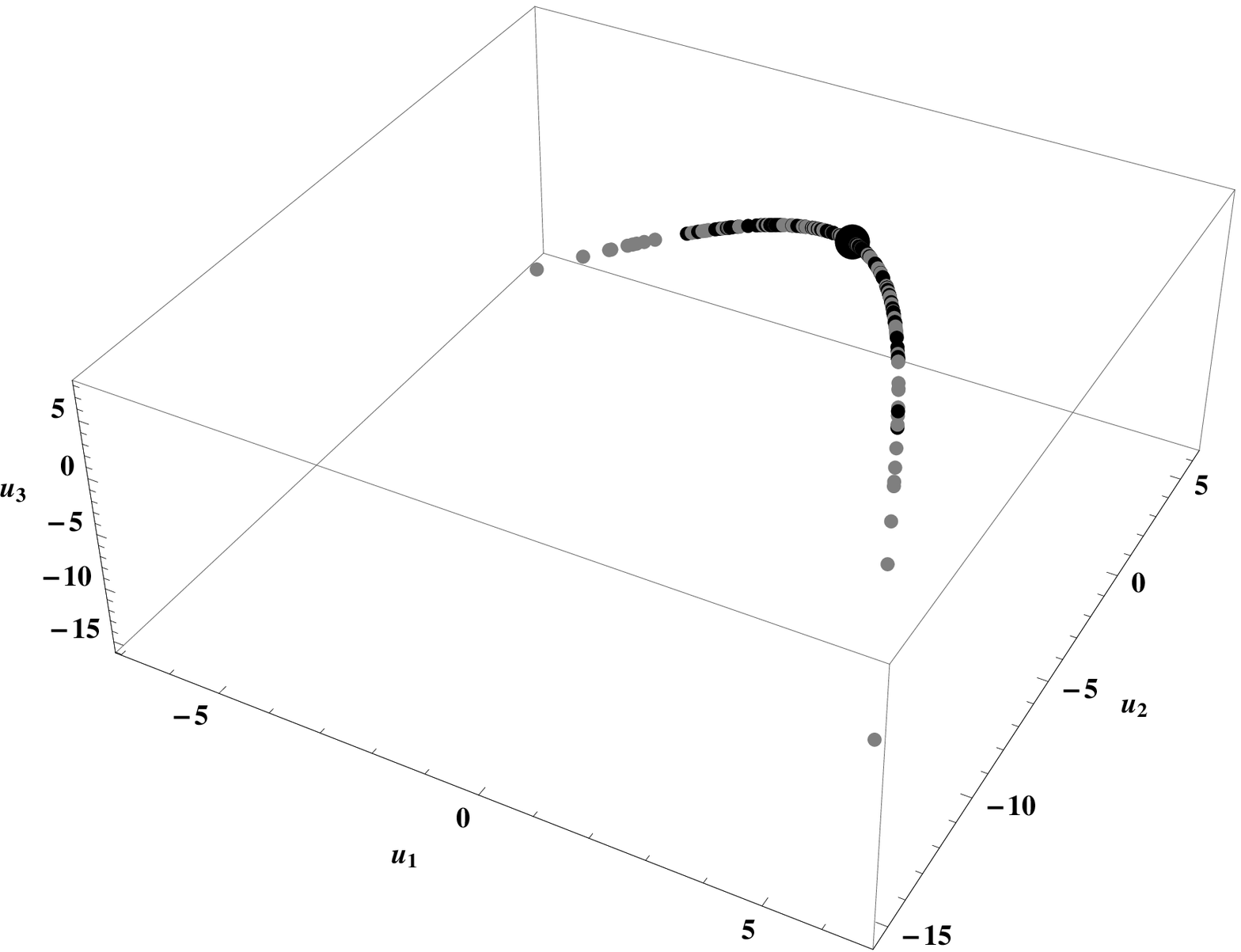}
\caption{}
\label{fig96sm:sub1}
\end{subfigure}
\\[1ex]
\begin{subfigure}{0.5\linewidth}
\centering
\includegraphics[width=0.8\linewidth]{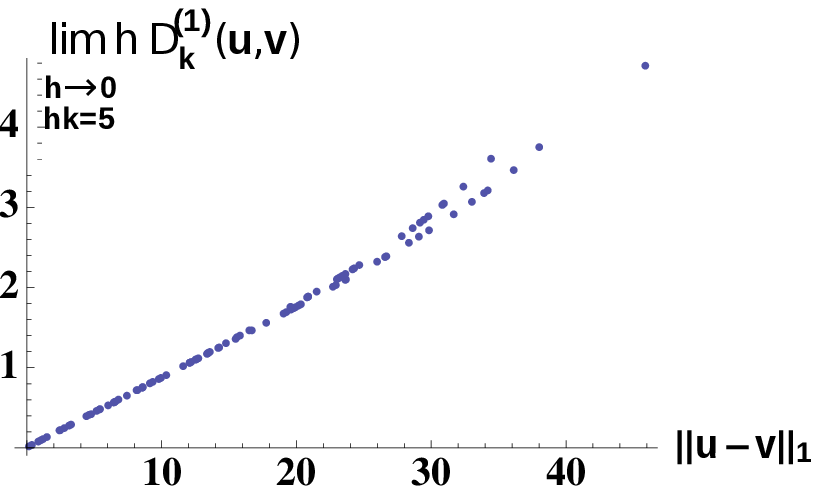}%, scale=1,bb=0 0 260?142
\caption{}
\label{fig96sm:sub2}
\end{subfigure}
\begin{subfigure}{0.5\linewidth}
\centering
\includegraphics[width=0.8\linewidth]{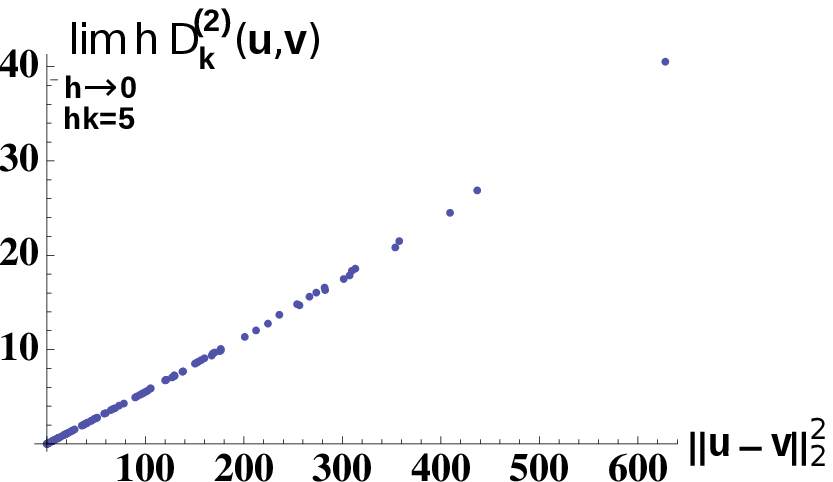}%, scale=1,bb=0 0 260?142
\caption{}
\label{fig96sm:sub3}
\end{subfigure}
\caption{Illustration of  the leaf set $U(\vct{u})$ and its properties for the Lorenz 96' model }
\label{fig:test}
\end{figure}

\begin{figure}
\begin{subfigure}{.5\linewidth}
\centering
\includegraphics[width=\linewidth]{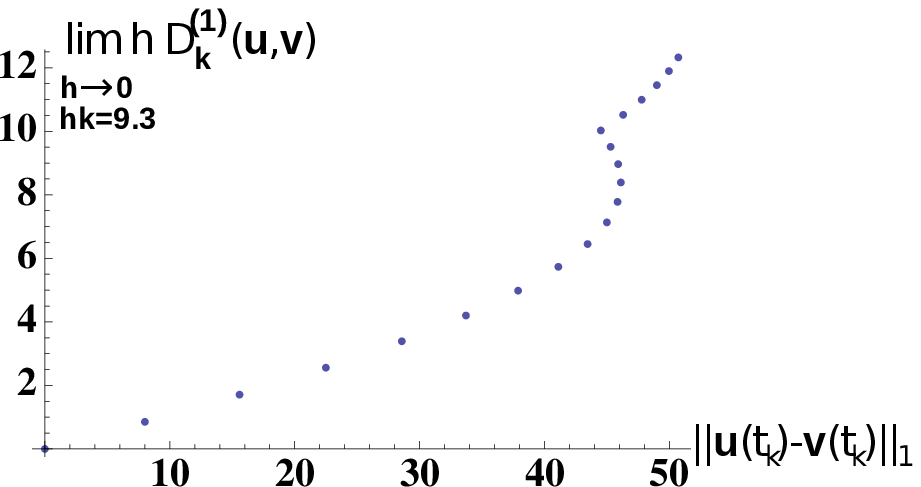}%scale=1,bb=0 0 280 145,
\caption{}
\label{fig63fi:sub1}
\end{subfigure}
\begin{subfigure}{0.5\linewidth}
\centering
\includegraphics[scale=1,bb=0 0 246 160, width=\linewidth]{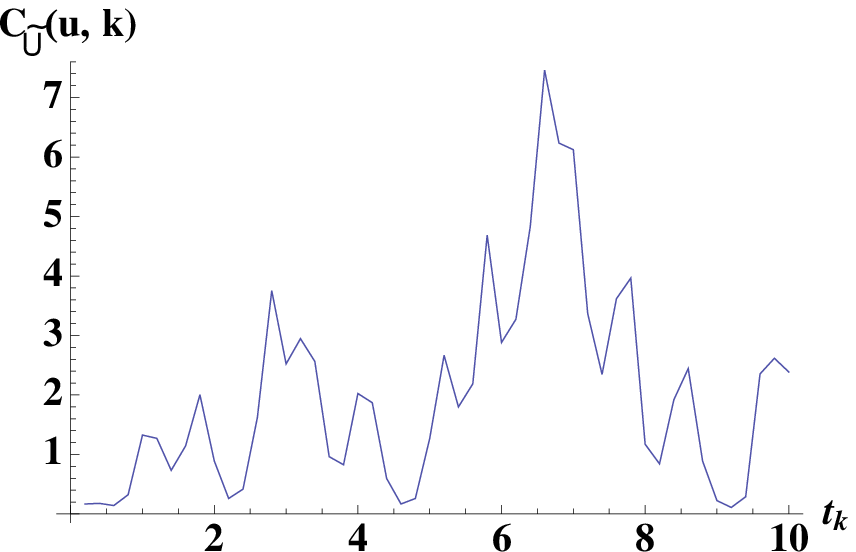}
\caption{}
\label{fig63fi:sub2}
\end{subfigure}\caption{Illustration of the properties of the anti-leaf sets $\tU(\vct{u},k)$ for the Lorenz 63' model}
\label{fig:test}
\end{figure}

\section{Upper bounds}\label{secupperbounds}
In this section we establish upper bounds for the smoother and the filter. In the case of bounded observation errors, we will give some conditions that guarantee that  the diameter of the support of the smoother (or the filter) are upper bounded by a constant times the size of the noise. In the case of unbounded observation errors, we show that under the same assumptions, there is an estimator based on the observations whose mean square error from the true position is upper bounded by a constant times the variance of the noise. 
We show that the assumptions required by our results can be deduced from the fact that a certain  system of polynomial equations has a unique solution. In Sections \ref{secapplicationlorenz63} and \ref{secapplicationlorenz96}, we apply our results to the Lorenz 63' and Lorenz 96' models. In Section \ref{secapplicationrandomcoeff}, we verify our assumptions for some 3 and 4 dimensional systems with random coefficients, when only the first coordinate is observed.
\subsection{Results}\label{secresultsupperbounds}
Let us define the observed part of the one parameter solution semigroup as
\begin{equation}\label{eqPhidef}
\Phi_t(\u):=\mtx{H} \Psi_t(\u) \text{ for } \u\in \BR, t\in \R_+.
\end{equation}

For our upper bounds we make the following assumption on the dynamics, the prior, and the initial point $\u$.
\begin{ass}\label{ass1} Suppose that there is an index $k\in \N$ and a positive constant $c(\u,k)$ such that for any $\v\in \BR$,
\[\max_{0\le i\le k} \| \Phi_{t_i}(\u)- \Phi_{t_i}(\v) \| \ge c(\u,k) \|\v- \u\|.\]
\end{ass}

Assumption \ref{ass1} quantifies how much the differences $\|\Phi_{t_i}(\v) - \Phi_{t_i}(\u)\|$ grow as we move away from $\u$. This assumption seems to be rather strong at first, since they involve ``global'' assumptions about $\Phi_{t_i}$, which can behave rather chaotically. 
However, as we shall see in Proposition \ref{Propcheckassumptions}, it is possible to deduce it from ``local'' assumptions about the derivatives of $\Phi$ at time $0$. These ``local'' assumptions in turn can be easily checked for the Lorenz 63' and Lorenz 96' models when the partial observations are chosen suitably (see Sections \ref{secapplicationlorenz63} and \ref{secapplicationlorenz96}). We believe that these assumptions hold for many observations scenarios in a wide range of dynamical systems such as Garelkin spectral truncations of the Navier--Stokes equations, and various discretisations of the shallow-water equations. Since these assumptions essentially only require that the observed components of the system given sufficiently many observations uniquely determine the initial position, we believe that they are more generally applicable than earlier consistency results for the 3D-Var shown in 
\cite{AlonsoStuartLongtime} and \cite{law2014controlling}.

Our assumptions on the derivatives are stated as follows.
\begin{ass}\label{assder}
Suppose that $\|\u\|<R$, and there is an index $j\in \N$ such that the system of equations in $\v$ defined as
\begin{equation}\label{eqphitideruv} 
\H\D^i \u= \H \D^i \v\text{ for every }0\le i\le j\end{equation}
has a unique solution $\v:=\u$ in $\BR$, and
\begin{equation}\label{eqspanderu}
\mr{span}\l\{\grad \l(\left(\H \D^i \u\right)_k\r): 0\le i\le j, 1\le k\le d_o\r\}=\R^{d},
\end{equation}
where $\grad$ denotes the gradient of the function in $\u$, and $\left( \right)_k$ denotes the $k$th coordinate.
\end{ass}

%\begin{ass}\label{assder}
%Suppose that there is an index $j\in \N$ such that the system of equations in $\v$ defined as
%\begin{equation}\label{eqphitideruv}
%\frac{d^i}{d t^i}\Phi_{0}(\u)= \frac{d^i}{d t^i}\Phi_{0}(\v)\text{ for every }0\le i\le j\end{equation}
%has a unique solution $\v:=\u$ in $\BR$, and
%\begin{equation}\label{eqspanderu}
%\mathrm{span}\left\{\grad \frac{d^i}{d t^i}\Phi_{0}^{k}(\u): 0\le i\le j, 1\le k\le d_o\right\}=\R^{d},
%\end{equation}
%where $\grad$ denotes the gradient of the function in $\u$.
%\end{ass}
One can see that \eqref{eqspanderu} is equivalent to 
\begin{equation}\label{eqlambdaminder}
\lambda_{\min}\left[\sum_{i=0}^{j}\sum_{k=1}^{d_o}\grad \l(\left(\H \D^i \u\right)_k\r) \cdot \left(\grad \l(\left(\H \D^i \u\right)_k\r) \right)'\right]>0.
\end{equation}

\begin{prop}[Assumptions on derivatives imply assumptions for upper bounds]\label{Propcheckassumptions}
Suppose that Assumption \ref{assder} holds. Then for sufficiently small $h$, Assumption \ref{ass1} holds for every $k\ge j$.
\end{prop}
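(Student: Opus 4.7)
The plan is to reduce to the case $k = j$ (since $\max_{0 \le i \le k}$ is monotone in $k$), and then combine a quantitative linearization near $\u$ with a compactness argument for $\v$ bounded away from $\u$. The bridge between Assumption~\ref{assder}, which is phrased in terms of the derivatives $\H \D^i \u$, and Assumption~\ref{ass1}, which is phrased in terms of $\Phi_{ih}(\v)$, will come from the Taylor expansion $\Psi_t(\v) = \sum_{l=0}^{\infty} \frac{t^l}{l!} \D^l \v$ (valid for $t$ below the radius of convergence guaranteed by \eqref{uderboundeq}) together with inversion of the scaled Vandermonde matrix $M_{il} := (ih)^l/l!$, $0 \le i, l \le j$. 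This matrix has nonzero determinant and inverse entries of size $\|(M^{-1})_{li}\| = O(h^{-l})$, so it lets us approximately read off $\H(\D^l \v - \D^l \u)$ from the observations $\Phi_{ih}(\v) - \Phi_{ih}(\u)$, $i = 0, \ldots, j$, with a remainder of order $h^{j+1-l}$ times either a universal constant (absolute bound, via \eqref{uderboundeq}) or $\|\v - \u\|$ (Lipschitz bound, via \eqref{ugradboundeq}).

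First I would prove a quantitative \emph{local} bound near $\u$. Applying the Vandermonde inversion to the operator identity $\J_\u(\Phi_{ih}) = \sum_{l=0}^{j} \frac{(ih)^l}{l!} \H \J_\u(\D^l \u) + O(h^{j+1})$ and then using \eqref{eqlambdaminder} to absorb the resulting $O(h^2)\|\vct{w}\|^2$ cross term yields
\[\max_{0 \le i \le j}\|\J_\u(\Phi_{ih})\vct{w}\| \ge c^*(h)\|\vct{w}\|, \qquad c^*(h) \ge c_0 h^{j},\]
for all $h$ sufficiently small. A standard Taylor-remainder estimate $\|\Phi_{ih}(\v) - \Phi_{ih}(\u) - \J_\u(\Phi_{ih})(\v-\u)\| = O(\|\v - \u\|^2)$ then upgrades this to $\max_i \|\Phi_{ih}(\v) - \Phi_{ih}(\u)\| \ge \tfrac{1}{2} c^*(h)\|\v - \u\|$ on some small ball $\|\v - \u\| \le \delta_1$.

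Next I would handle the far field $\{\v \in \BR : \|\v - \u\| \ge \delta_1\}$ by compactness. Since $\v \mapsto \max_i \|\Phi_{ih}(\v) - \Phi_{ih}(\u)\|$ is continuous and this region is compact, it suffices to show the function does not vanish there; dividing by the trivial bound $\|\v - \u\| \le 2R$ then yields a uniform linear constant. The non-vanishing reduces to a uniqueness lemma: for $h$ small enough, $\Phi_{ih}(\v) = \Phi_{ih}(\u)$ for all $i = 0, \ldots, j$ forces $\v = \u$. To prove it, assume the contrary and take $h_n \to 0$ with $\v_n \in \BR \setminus \{\u\}$ satisfying the identities. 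The absolute Vandermonde bound forces $\|\H(\D^l \v_n - \D^l \u)\| \to 0$ for each $l \le j$, so along a subsequence $\v_n \to \v^* \in \BR$ with $\H\D^l \v^* = \H\D^l \u$; by \eqref{eqphitideruv} this gives $\v^* = \u$, so in fact $\v_n \to \u$. The Lipschitz-in-$\|\v-\u\|$ Vandermonde bound, combined with linearization and passage to the limit of the unit vectors $(\v_n - \u)/\|\v_n - \u\| \to \vct{e}^*$, then produces a direction $\vct{e}^*$ annihilated by every $\J_\u(\H \D^l)$, $l = 0, \ldots, j$, contradicting \eqref{eqspanderu}.

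I expect the main obstacle to be orchestrating the dichotomy in the uniqueness step so that the \emph{absolute} remainder bound (used to exclude limits $\v^* \neq \u$) and the \emph{linear-in-$\|\v - \u\|$} remainder bound (used via \eqref{eqspanderu} when $\v_n \to \u$) cooperate cleanly under a single choice of $h$ small enough. Once this is in place, the final constant $c(\u, j) := \min\bigl(c^*(h)/2,\, c_2/(2R)\bigr) > 0$ gives Assumption~\ref{ass1} for $k = j$, and the same constant works for every $k \ge j$ by monotonicity of the maximum in $k$.
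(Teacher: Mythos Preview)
Your argument is correct and uses the same ingredients as the paper --- Taylor expansion, Vandermonde inversion of the finite-difference matrix, the local nondegeneracy \eqref{eqspanderu}, and compactness together with the uniqueness \eqref{eqphitideruv} --- but the two proofs are organised differently. The paper first establishes the $h$-independent global bound
\[
\sum_{i=0}^{j}\bigl\|\H\D^i\u-\H\D^i\v\bigr\|^2 \ge c_j(\u)\,\|\v-\u\|^2 \quad\text{for all }\v\in\BR,
\]
doing the near/far dichotomy once at the level of the derivatives $\H\D^i$ (near: \eqref{eqlambdaminder} and Taylor; far: compactness and \eqref{eqphitideruv}). Only then does it introduce $h$, showing via \eqref{ugradboundeq} that the finite-difference approximations $\hat\Phi^{(i)}$ differ from $\H\D^i$ by something globally $O(h)$-Lipschitz in $\v$, so for $h$ small the same global bound transfers to $\hat\Phi^{(i)}$ and hence to $\max_i\|\Phi_{t_i}(\v)-\Phi_{t_i}(\u)\|$. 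Your route instead carries out the near/far split directly on the $h$-dependent maps $\Phi_{ih}$, which forces you into the sequential contradiction argument with $h_n\to 0$ to rule out far-field zeros. This works, but the paper's ordering is cleaner: the delicate dichotomy is done once in an $h$-free setting, and the passage to observations becomes a single Lipschitz perturbation, avoiding any interaction between the shrinking $\delta_1(h)$ and the far-field minimum.
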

The proof of this proposition based on Taylor's expansion. It is included in Section \ref{secassderproof} of the Appendix.

Now we are ready to state our upper bounds. In our first result, we will assume that the observation errors satisfy that $\|\vct{Z}_i\|\le \epsilon$ almost surely. Given the observations $\vct{Y}_0, \vct{Y}_1,\ldots, \vct{Y}_k$, the support of the smoothing distribution for $\epsilon$-bounded observation errors ($\|Z_i\|\le \epsilon$ almost surely for every $i\in \N$) is contained in 
\begin{equation}
\Lambda_k^{(\epsilon)}:=\left\{\v\in \BR: \max_{0\le i\le k}\|\vct{Y}_i-\Phi_{t_i}(\v)\|\le \epsilon\right\}.
\end{equation}
Alternatively, we can define the $(k,\epsilon)$ neighbourhood of the true initial point $\u$ as
\begin{equation}
\Omega_k^{(\epsilon)}:=\left\{\v\in \BR: \max_{0\le i\le k}\|\Phi_{t_i}(\v)-\Phi_{t_i}(\u)\|\le \epsilon\right\}.
\end{equation}
By the triangle inequality, we have $\Lambda_k^{(\epsilon)}\subset \Omega_k^{(2\epsilon)}$.

\begin{theorem}[Upper bound for bounded observation errors]\label{thmupperbounded}
Under Assumption \ref{ass1}, for any $\epsilon>0$, we have
\begin{equation}\label{suppOmegaLambdaeq}\diam \left(\Omega_k^{(\epsilon)}\right)\le c(\u,k) \epsilon, \quad \text{ and thus }\quad \diam \left(\Lambda_k^{(\epsilon)}\right)\le 2c(\u,k) \epsilon.\end{equation}
Thus for $\epsilon$-bounded observation errors ($\|Z_i\|\le \epsilon$ almost surely for every $i\in \N$) the support of the smoother is bounded as
\begin{equation}\label{suppsmupperbndeq}
\mathrm{diam}\,\supp\musm(\cdot|\vct{Y}_0,\ldots, \vct{Y}_{k})\le 2c(\u,k) \epsilon,
\end{equation}
and the support of the filter is bounded as
\begin{equation}\label{suppfiupperbndeq}
\mathrm{diam}\,\supp\mufi(\cdot|\vct{Y}_0,\ldots, \vct{Y}_{k})\le 2c(\u,k)e^{Gt_k}  \epsilon,
\end{equation}
with the constant $G$ defined as in \eqref{Gdef}.
\end{theorem}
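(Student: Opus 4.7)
The plan is to unpack the definitions, apply Assumption \ref{ass1} pointwise to obtain a radius bound on $\Omega_k^{(\epsilon)}$, transfer this to $\Lambda_k^{(\epsilon)}$ via the triangle inequality under the bounded-noise hypothesis, and finally push the resulting smoother bound forward to the filter using the Gr\"onwall-type Lipschitz estimate \eqref{eqpathdistancebound} on the flow $\Psi_{t_k}$.

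First I would treat $\Omega_k^{(\epsilon)}$. For any $\v \in \Omega_k^{(\epsilon)}$, the defining inequality $\max_{0 \le i \le k}\|\Phi_{t_i}(\v) - \Phi_{t_i}(\u)\| \le \epsilon$ fed into Assumption \ref{ass1} forces $c(\u,k)\|\v - \u\| \le \epsilon$, so $\v$ lies inside the ball of radius $\epsilon/c(\u,k)$ around $\u$. The triangle inequality gives the stated diameter bound on $\Omega_k^{(\epsilon)}$ (up to the precise form of the constant).

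Next, to pass to $\Lambda_k^{(\epsilon)}$ and the smoother, I would exploit the bounded-noise assumption $\|\vct{Z}_i\| \le \epsilon$: the true initial condition satisfies $\|\vct{Y}_i - \Phi_{t_i}(\u)\| = \|\vct{Z}_i\| \le \epsilon$, so for any $\v \in \Lambda_k^{(\epsilon)}$ the triangle inequality yields $\|\Phi_{t_i}(\v) - \Phi_{t_i}(\u)\| \le 2\epsilon$, i.e.\ the inclusion $\Lambda_k^{(\epsilon)} \subset \Omega_k^{(2\epsilon)}$ already noted in the excerpt. Applying the first-step bound with $\epsilon$ replaced by $2\epsilon$ yields the $\Lambda_k^{(\epsilon)}$ half of \eqref{suppOmegaLambdaeq}. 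Because the smoothing density \eqref{eqmusm} vanishes off $\Lambda_k^{(\epsilon)}$ (each factor $\eta(\vct{Y}_i - \H\Psi_{t_i}(\v))$ is zero once $\|\vct{Y}_i - \Phi_{t_i}(\v)\| > \epsilon$, since $\eta$ is supported in a ball of radius $\epsilon$), the bound \eqref{suppsmupperbndeq} is immediate.

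Finally, for the filter, the formula \eqref{eqmufi} shows that $\supp\mufi(\cdot|\vct{Y}_0,\ldots,\vct{Y}_k) = \Psi_{t_k}\bigl(\supp\musm(\cdot|\vct{Y}_0,\ldots,\vct{Y}_k)\bigr)$ via the change of variables $\v \mapsto \Psi_{-t_k}(\v)$ built into the expression. By \eqref{eqpathdistancebound}, the restriction of $\Psi_{t_k}$ to $\BR$ is Lipschitz with constant $e^{G t_k}$, so it inflates diameters by at most this factor; combining with \eqref{suppsmupperbndeq} yields \eqref{suppfiupperbndeq}. No step poses a real obstacle: the argument is a short chaining of Assumption \ref{ass1}, a triangle-inequality inclusion, and the a priori flow estimate \eqref{eqpathdistancebound}. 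All of the mathematical content sits in Assumption \ref{ass1} itself, whose verification is the job of Proposition \ref{Propcheckassumptions} and of the model-specific analyses in Sections \ref{secapplicationlorenz63}--\ref{secapplicationrandomcoeff}.
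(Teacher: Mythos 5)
Your proposal is correct and follows essentially the same route as the paper: apply Assumption \ref{ass1} pointwise to bound $\Omega_k^{(\epsilon)}$, use the inclusion $\Lambda_k^{(\epsilon)}\subset\Omega_k^{(2\epsilon)}$ and the fact that the smoother's support lies in $\Lambda_k^{(\epsilon)}$, then push forward with the Lipschitz bound \eqref{eqpathdistancebound} for the filter. Your parenthetical about the constant is well taken --- the argument actually yields $2\epsilon/c(\u,k)$ rather than $c(\u,k)\epsilon$, a notational slip in the theorem statement that the paper's one-line proof glosses over.
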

\begin{proof}
\eqref{suppOmegaLambdaeq} directly follows from Assumption \ref{ass1}. \eqref{suppsmupperbndeq} follows from the fact that the support of the smoother is included in the set $\Lambda_k^{(\epsilon)}$. Finally, \eqref{suppfiupperbndeq} is implied by \eqref{eqpathdistancebound}, and the fact that
the support of the filter is included in the set $\Psi_{t_k}(\Lambda_k^{(\epsilon)}):=\{\Psi_{t_k}(\v): \v\in \Lambda_k^{(\epsilon)}\}$.
\end{proof}

The following result concerns the case of unbounded observation errors.

\begin{theorem}[Upper bound for unbounded observation errors]\label{thmupperunbounded}
Suppose that Assumption \ref{ass1} holds, and that
\[\sigma_Z^2:=\E(\|\vct{Z}_i\|^2)<\infty.\]
Let
\[E_{\max}(\v|\vct{Y}_0,\ldots,\vct{Y}_k):=\max_{0\le i\le k} \|\Phi_{t_i}(\v)-Y_i\|,\]
and
\begin{equation}\label{eqUinftydef}
\u_{\min}:=\argmin_{\v\in \BR} E_{\max}(\v|\vct{Y}_0,\ldots,\vct{Y}_k).
\end{equation}
If there are multiple minima, than we can define this function as any one of them. Then the estimator $\u_{\min}$ of the initial position $\u$ satisfies that
\begin{equation}\label{uminerreqsm}\E\left(\left.\|\u_{\min}-\u\|^2\right|\u\right)\le D(\u, k) \cdot \sigma_Z^2,
\end{equation}
for some constant $D(\u,k)<\infty$.

Moreover, the push-forward map of $\u_{\min}$, $\Psi_{t_k}(\u_{\min})$, is an estimator of the current position $\u(t_k)$, satisfying that
\begin{equation}\label{uminerreqfi}\E\left(\left.\|\Psi_{t_k}(\u_{\min})-\u(t_k)\|^2\right|\u\right)\le D(\u, k) e^{2G t_k}\cdot \sigma_Z^2,
\end{equation}
with the constant $G$ defined as in \eqref{Gdef}.
\end{theorem}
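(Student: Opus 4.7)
The plan is to exploit the fact that the true initial point $\u$ is itself feasible for the minimisation defining $\u_{\min}$, so that $E_{\max}(\u_{\min}|\vct{Y}_0,\ldots,\vct{Y}_k) \le E_{\max}(\u|\vct{Y}_0,\ldots,\vct{Y}_k) = \max_{0\le i\le k}\|\vct{Z}_i\|$. Applying the triangle inequality observation-by-observation, for each $0\le i\le k$,
\[
\|\Phi_{t_i}(\u_{\min}) - \Phi_{t_i}(\u)\| \le \|\Phi_{t_i}(\u_{\min}) - \vct{Y}_i\| + \|\vct{Y}_i - \Phi_{t_i}(\u)\| \le 2\max_{0\le j\le k}\|\vct{Z}_j\|.
\]
Taking the maximum over $i$ on the left and then invoking Assumption \ref{ass1} applied to $\v = \u_{\min} \in \BR$ gives
\[
c(\u,k)\,\|\u_{\min} - \u\| \le \max_{0\le i\le k}\|\Phi_{t_i}(\u_{\min}) - \Phi_{t_i}(\u)\| \le 2\max_{0\le i\le k}\|\vct{Z}_i\|.
\]

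Squaring this deterministic inequality and taking conditional expectations, then bounding the maximum by the sum,
\[
\E\!\left(\left.\|\u_{\min} - \u\|^2\right|\u\right) \le \frac{4}{c(\u,k)^2}\,\E\!\left[\max_{0\le i\le k}\|\vct{Z}_i\|^2\right] \le \frac{4}{c(\u,k)^2}\sum_{i=0}^{k}\E\|\vct{Z}_i\|^2 = \frac{4(k+1)}{c(\u,k)^2}\,\sigma_Z^2.
\]
This establishes \eqref{uminerreqsm} with $D(\u,k) := 4(k+1)/c(\u,k)^2$. For the filtering bound \eqref{uminerreqfi}, I would simply apply the Gr\"onwall-type estimate \eqref{eqpathdistancebound} to the pair $\u_{\min}, \u \in \BR$, which yields $\|\Psi_{t_k}(\u_{\min}) - \Psi_{t_k}(\u)\| \le e^{Gt_k}\|\u_{\min} - \u\|$; squaring and taking expectations then gives the result immediately.

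There is essentially no hard step here: the argument is short once one realises that feasibility of $\u$ turns $\u_{\min}$ into a point whose observational residuals are controlled by $\max_i\|\vct{Z}_i\|$, and that Assumption \ref{ass1} then transfers this observational control into control on $\|\u_{\min} - \u\|$. The only minor subtlety is that $\u_{\min}$ may not be uniquely defined; however, the bound holds for \emph{any} minimiser because the chain of inequalities above never used uniqueness, only that $\u_{\min}$ achieves the infimum (or is any measurable selection from the set of minimisers, which exists by compactness of $\BR$ and continuity of $E_{\max}(\cdot|\vct{Y}_0,\ldots,\vct{Y}_k)$).
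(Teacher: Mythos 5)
Your proof is correct and follows essentially the same route as the paper: feasibility of $\u$ gives $E_{\max}(\u_{\min}|\cdot)\le\max_i\|\vct{Z}_i\|$, the triangle inequality converts this to a bound on $\max_i\|\Phi_{t_i}(\u_{\min})-\Phi_{t_i}(\u)\|$, Assumption \ref{ass1} transfers it to $\|\u_{\min}-\u\|$, and \eqref{eqpathdistancebound} handles the filtering bound. The only (cosmetic) difference is that the paper routes through Theorem \ref{thmupperbounded} with the random radius $\epsilon=2Z_{\max}^{(k)}\sigma_Z$ and consequently writes the constant as $4(k+1)c(\u,k)^2$, whereas your direct application of Assumption \ref{ass1} yields $4(k+1)/c(\u,k)^2$, which is the form consistent with the assumption as literally stated.
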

\begin{proof}
Let $Z_{\max}^{(k)}:=\max_{0\le i\le k}\frac{\|\vct{Z}_i\|}{\sigma_Z}$, then 
\[
E_{\max}(\u_{\min}|\vct{Y}_0,\ldots,\vct{Y}_k)\le E_{\max}(\u|\vct{Y}_0,\ldots,\vct{Y}_k)=Z_{\max}^{(k)}\sigma_Z,
\]
and thus by the triangle inequality, we have
\[\max_{0\le i\le k}\|\Phi_{t_i}(\u_{\min})-\Phi_{t_i}(\u)\|\le 2Z_{\max}^{(k)}\sigma_Z.\]
Therefore $\u_{\min}\in \Omega_k^{2 Z_{\max}^{(k)} \sigma_Z}$, and \eqref{uminerreqsm} follows by Theorem \ref{thmupperbounded}, with \[D(\u,k):=4 \E\left((Z_{\max}^{(k)})^2\right)\cdot c(\u,k)^2\le 4(k+1)c(\u,k)^2.\] Finally, \eqref{uminerreqfi} follows by \eqref{eqpathdistancebound}.
\end{proof}

\subsection{Application to the Lorenz '63 model}\label{secapplicationlorenz63}
As shown on page 16 of \cite{AlonsoStuartLongtime}, the Lorenz equations \eqref{lorenz63eq1}-\eqref{lorenz63eq3} can be transformed to the form of \eqref{diffeqgeneralform} by a linear change of coordinates. In this case, the coefficients of the equation are given by
\begin{align*}
\mtx{A}=\left[\begin{matrix}a& -a& 0\\a& 1& 0\\ 0& 0& b \end{matrix}\right],\quad \mtx{B}(u,\tilde{u})=\left[\begin{matrix}0\\(u_1\tilde{u}_3+u_3\tilde{u}_1)/2\\ -(u_1\tilde{u}_2+u_2\tilde{u}_1)/2\end{matrix}\right], \quad \mtx{f}=\left[\begin{matrix}0\\0\\ -b/r+a\end{matrix}\right].
\end{align*}

We choose the observation operator as $\mtx{H}:=\left[\begin{matrix}1& 0& 0\\0& 0& 0\\ 0& 0& 0 \end{matrix}\right]$. This corresponds to observing the first coordinate $u_1$ of the process.

The following proposition shows that our theory applies here.
\begin{prop}\label{propupperLorenz63}
For $j\ge 2$, for Lebesgue almost every initial point $\u\in \BR$, Assumption \ref{assder} holds for the process described above. 

As a consequence, for $\epsilon$-bounded observation errors ($\|Z_i\|\le \epsilon$ almost surely for every $i\in \N$), for almost every initial point $\u\in \BR$, for sufficiently small $h$, the diameter of the support of the smoother $\musm(\v|\vct{Y}_0,\ldots,\vct{Y}_k)$ and the filter $\mufi(\v|\vct{Y}_0,\ldots,\vct{Y}_k)$ can be bounded by $C_{\mathrm{sm}}(\u,k) \epsilon$ and $C_{\mathrm{fi}}(\u,k) \epsilon$, respectively, for some finite constants $C_{\mathrm{sm}}(\u,k)$ and $C_{\mathrm{fi}}(\u,k)$ which do not depend on $\epsilon$.

Moreover, for unbounded observation satisfying that $\sigma_Z^2:=\E(\|\vct{Z}_i\|^2)<\infty$, for almost every initial point $\u\in \BR$, for sufficiently small $h$, there are some estimators based on the observations, $U_{\mathrm{sm}}(\vct{Y}_0,\ldots,\vct{Y}_k)$ and $U_{\mathrm{fi}}(\vct{Y}_0,\ldots,\vct{Y}_k)$, such that the mean square errors 
$\E \left[(U_{\mathrm{sm}}(\vct{Y}_0,\ldots,\vct{Y}_k)-\u)^2\right]$ and $\E \left[(U_{\mathrm{fi}}(\vct{Y}_0,\ldots,\vct{Y}_k)-\u(t_k))^2\right]$ of the initial and current positions are bounded by
$D_{\mathrm{sm}}(\u,k) \sigma_Z^2$ and $D_{\mathrm{fi}}(\u,k) \sigma_Z^2$, respectively, for some finite constants $D_{\mathrm{sm}}(\u,k)$ and $D_{\mathrm{fi}}(\u,k)$ which do not depend on $\sigma_Z$.
\end{prop}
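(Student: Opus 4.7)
The plan is to verify Assumption \ref{assder} for $j=2$ at Lebesgue almost every $\u\in \BR$, and then let Proposition \ref{Propcheckassumptions}, Theorem \ref{thmupperbounded} and Theorem \ref{thmupperunbounded} deliver all four conclusions. Since the linear change of variables that transforms \eqref{lorenz63eq1}--\eqref{lorenz63eq3} into \eqref{diffeqgeneralform} is a linear isomorphism $T$, both conditions of Assumption \ref{assder} transform covariantly: injectivity of the derivative-evaluation map is preserved, and the gradient span condition is preserved because $(T^{-1})^{\top}$ is invertible. I will therefore carry out the computation directly in the original Lorenz coordinates, in which $\H$ is just the projection $(u_1,u_2,u_3)\mapsto u_1$ (the other two rows of $\H$ contribute only identically zero entries and zero gradients).

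The key step is an explicit calculation of $(\D^i \u)_1$ for $i=0,1,2$ from \eqref{lorenz63eq1}--\eqref{lorenz63eq3}. I obtain $(\D^0\u)_1 = u_1$, $(\D^1\u)_1 = a(u_2-u_1)$, and after one differentiation along the flow, $(\D^2\u)_1 = a(r+a)u_1-a(1+a)u_2-a u_1 u_3$. Reading these three relations as a system in $\v$ (with $\u$ fixed), the first recovers $v_1=u_1$, the second then recovers $v_2=u_2$ since $a>0$, and the third recovers $v_3$ uniquely provided $v_1=u_1\ne 0$. This verifies condition \eqref{eqphitideruv}.

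For the span condition \eqref{eqspanderu}, I compute
\[
\grad(\D^0\u)_1=(1,0,0),\quad \grad(\D^1\u)_1=(-a,a,0),\quad \grad(\D^2\u)_1=\bigl(a(r+a)-a u_3,\,-a(1+a),\,-a u_1\bigr),
\]
so the determinant of the $3\times 3$ matrix built from these three rows equals $-a^2 u_1$. Consequently both conditions of Assumption \ref{assder} hold at every $\u\in\BR$ with $u_1\ne 0$, which is a set of full Lebesgue measure in $\BR$. Adding further values of $i$ can only enlarge the over-determined system and the set of gradients, so the same conclusion holds for every $j\ge 2$.

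With Assumption \ref{assder} verified almost everywhere, Proposition \ref{Propcheckassumptions} yields Assumption \ref{ass1} for every $k\ge 2$ and every sufficiently small $h$, with some constant $c(\u,k)>0$. Plugging this into Theorem \ref{thmupperbounded} gives the stated bounds on the diameters of the smoothing and filtering supports with $C_{\mr{sm}}(\u,k):=2c(\u,k)$ and $C_{\mr{fi}}(\u,k):=2c(\u,k)e^{Gt_k}$. For the unbounded-noise statement, I take the estimators $U_{\mr{sm}}:=\u_{\min}$ defined in \eqref{eqUinftydef} and $U_{\mr{fi}}:=\Psi_{t_k}(\u_{\min})$; Theorem \ref{thmupperunbounded} then gives the mean square error bounds with $D_{\mr{sm}}(\u,k):=D(\u,k)$ and $D_{\mr{fi}}(\u,k):=D(\u,k)e^{2Gt_k}$, both finite and independent of $\sigma_Z$. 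Honestly, no step here is a genuine obstacle; all the analytic work has been done in the general upper-bound theorems and in Proposition \ref{Propcheckassumptions}. The only mildly delicate point is recognising that observing $u_1$ together with the first two of its time derivatives is exactly sufficient to pin down the full state as an explicit rational function of these three quantities, which is the structural feature that makes the exceptional set $\{u_1=0\}$ Lebesgue-null.
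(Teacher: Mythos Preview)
Your proof is correct and follows essentially the same approach as the paper: compute $(\D^i\u)_1$ for $i=0,1,2$, observe that these three quantities determine $(u_1,u_2,u_3)$ explicitly away from $\{u_1=0\}$, and invoke Proposition~\ref{Propcheckassumptions} together with Theorems~\ref{thmupperbounded} and~\ref{thmupperunbounded}. The only presentational difference is that the paper carries out the computation in the transformed coordinates of \eqref{diffeqgeneralform}, whereas you work in the original Lorenz coordinates; since the change of variables here is merely an affine shift in the third coordinate (so $u_1$ is the same in both systems), the two computations are equivalent, and your explicit determinant $-a^2u_1$ is in fact a tidier verification of the span condition than the paper's appeal to ``upper triangular form.''
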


\begin{proof}[Proof of Proposition \ref{propupperLorenz63}]
Due to the definition of the observation operator, we have $\H \D^i(\v) = \D^i v_1$. Now based on the equations \eqref{diffeqgeneralform}, we have
\begin{align}
\label{eqv11}\D^0 v_1&=v_1,\\
\label{eqv12}\D^1 v_1&=-av_1+av_2,\\
\label{eqv13}\D^2 v_1&=-a \D v_1+a\D v_2\\
&=-a(-av_1+av_2)+a(-av_1-v_2 -v_1 v_3)\\
&=-2a^2 v_1 - (a^2+a)v_2-av_1 v_3.
\end{align}
Based on this, we can express $v_1$, $v_2$ and $v_3$ as a function of $\frac{d^0}{d t^0}v_1(0)$, $\frac{d}{d t}v_1(0)$ and $\frac{d^2}{d t^2}v_1(0)$ as
\begin{align}
v_1&=\frac{d^0}{d t^0}v_1(0),\\
v_2&=\frac{1}{a}\frac{d}{d t}v_1(0)+ \frac{d^0}{d t^0}v_1(0),\\
v_3&=\frac{\frac{d^2}{d t^2}v_1(0) -(a+1)\frac{d}{d t}v_1(0) +(a^2-a) \frac{d^0}{d t^0}v_1(0) }{a \frac{d^0}{d t^0}v_1(0)}  \text{ if } v_1\ne 0.
\end{align}
These explicit expressions imply that the condition \eqref{eqphitideruv} holds for almost every $\u\in \BR$. Condition \eqref{eqspanderu} is satisfied because of the upper triangular form of the equations \eqref{eqv11}-\eqref{eqv13}. The claims on the smoother and the filter now directly follow from Theorems \ref{thmupperbounded} and \ref{thmupperunbounded}.
\end{proof}

\subsection{Application to the Lorenz '96 model}\label{secapplicationlorenz96}
The Lorenz '96 model is a  $d$ dimensional chaotic dynamical system which was introduced in \cite{lorenz1996predictability}. As shown on page 16 of \cite{AlonsoStuartLongtime}, it can be written in the framework of \eqref{diffeqgeneralform} as
\begin{align*}
\mtx{A}=\mtx{I}_{d\times d},  \mtx{B}(u,\tilde{u})=-\frac{1}{2}\left[\begin{matrix}\vdots\\ \tilde{u}_{i-1}u_{i+1}+u_{i-1}\tilde{u}_{i+1} -\tilde{u}_{i-2}u_{i-1}-u_{i-2}\tilde{u}_{i-1}  \\ \vdots\end{matrix}\right],\mtx{f}=\left[\begin{matrix}8\\ \vdots \\ 8\end{matrix}\right],
\end{align*}
where the indices of $\u$ in the expression of $\mtx{B}$ are understood modulo $d$. The observation matrix $\mtx{H}$ is defined as $\mtx{H}_{1,1}=\mtx{H}_{2,2}=\mtx{H}_{3,3}=1$, and $0$ in every other element. This means that we observe the first 3 coordinates $u_1$, $u_2$, and $u_3$.
The following proposition shows that our theory is applicable to this situation.

\begin{prop}\label{propupperLorenz96}
For $j\ge d-3$, for Lebesgue almost every initial point $\u\in \BR$, Assumption \ref{assder} holds for the process described above. As a consequence, the same results hold for the smoother and the filter as in Proposition \ref{propupperLorenz63}.
\end{prop}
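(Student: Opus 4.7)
The plan is to verify Assumption \ref{assder} for Lebesgue-a.e.~$\u\in\BR$ and then invoke Proposition \ref{Propcheckassumptions} together with Theorems \ref{thmupperbounded} and \ref{thmupperunbounded}, exactly as in Proposition \ref{propupperLorenz63}, taking the estimators $U_{\mathrm{sm}}:=\u_{\min}$ and $U_{\mathrm{fi}}:=\Psi_{t_k}(\u_{\min})$ from \eqref{eqUinftydef}. Since $\H$ extracts coordinates $1,2,3$, the system \eqref{eqphitideruv} reads $\D^i v_k=\D^i u_k$ for $0\le i\le j$ and $k\in\{1,2,3\}$, i.e.~the first three coordinates of $\v$ and all of their time-derivatives at $t=0$ up to order $j$ are prescribed.

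The core step is the iterative recovery of the remaining $d-3$ coordinates. Writing the Lorenz 96' ODE as $\D v_i=-v_i+v_{i-1}(v_{i+1}-v_{i-2})+8$, the three first-derivative equations at $i=1,2,3$ are linear in $v_{d-1}$, $v_d$ and $v_4$ respectively, so from $v_1=u_1$, $v_2=u_2$, $v_3=u_3$ and the three first-derivative equations one reads off, in order, $v_4$ (coefficient $u_2$), $v_d$ (coefficient $-u_1$) and $v_{d-1}$ (coefficient $-u_d$). Induction on $k\ge 1$ shows that $\D^k v_3$ is linear in $v_{3+k}$ with leading coefficient $\prod_{i=2}^{k+1}u_i$, obtained by repeatedly differentiating and substituting $\D v_{l+3}=-v_{l+3}+v_{l+2}(v_{l+4}-v_{l+1})+8$, and analogously $\D^k v_2$ and $\D^k v_1$ are linear in $v_{d-2k+2}$ and $v_{d-2k+1}$ with monomial coefficients in the previously-recovered coordinates. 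After $k$ orders the recovered set is $\{u_{d-2k+1},\ldots,u_d\}\cup\{u_1,\ldots,u_{k+3}\}$ (size $3k+3$, modulo wraparound), so $k=\lceil(d-3)/3\rceil$ already suffices for uniqueness; the bound $j\ge d-3$ in the proposition is therefore more than enough. The exceptional set on which some leading coefficient vanishes is cut out by finitely many polynomial equations in $\u$ and hence has Lebesgue measure zero.

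For the span condition \eqref{eqspanderu} I would order the outputs as $(v_1,v_2,v_3,\D v_3,\D v_2,\D v_1,\D^2 v_3,\D^2 v_2,\D^2 v_1,\ldots)$ and the inputs in the matched order $(v_1,v_2,v_3,v_4,v_d,v_{d-1},v_5,v_{d-2},v_{d-3},\ldots)$. The same leading-coefficient computation shows that each new output introduces exactly the next input linearly with a generically non-vanishing coefficient, while earlier outputs do not depend on later inputs; the Jacobian is therefore triangular with non-zero diagonal and has full rank $d$. Combined with Proposition \ref{Propcheckassumptions} and the two upper-bound theorems, this finishes the argument. The main obstacle is the bookkeeping of the induction: differentiating $\D^{k-1} v_3$ produces many cross-terms from $\D v_i$ with $i$ in the already-recovered window, and one must verify that the newly-introduced coordinate $v_{3+k}$ enters through a single chain of substitutions so that its coefficient is a non-trivial monomial (and not a polynomial susceptible to accidental cancellation), with the analogous tracking required for the backward recoveries $v_{d-2k+1}$ and $v_{d-2k+2}$.
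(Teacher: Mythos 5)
Your proposal is correct, and it ends the same way as the paper: verify Assumption \ref{assder}, then invoke Proposition \ref{Propcheckassumptions} together with Theorems \ref{thmupperbounded} and \ref{thmupperunbounded}, with the estimators $\u_{\min}$ and $\Psi_{t_k}(\u_{\min})$ from \eqref{eqUinftydef}. Your span argument---a triangular Jacobian in a matched ordering of outputs and inputs---is essentially the paper's, which computes $\grad\l(\left(\H\D^i\u\right)_k\r)$ and observes that each derivative order contributes three new coordinate directions (one forward, two backward). Where you genuinely diverge is the uniqueness step \eqref{eqphitideruv}. The paper runs the elimination purely forward: from $\D v_3$ one solves for $v_4$; since $\D^i v_1,\D^i v_2,\D^i v_3$ are all prescribed for $i\le j$, differentiating that relation yields $\D^i v_4$ for $i\le j-1$, hence $v_5$ from $\D v_4$, and so on---one new coordinate per derivative order, which is exactly why $j\ge d-3$ is the natural threshold. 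You instead eliminate bidirectionally ($v_{3+k}$ from $\D^k v_3$, $v_{d-2k+2}$ from $\D^k v_2$, $v_{d-2k+1}$ from $\D^k v_1$), recovering three coordinates per order, so that $j\ge\lceil (d-3)/3\rceil$ already suffices; this proves a strictly stronger statement at the cost of heavier bookkeeping. The obstacle you flag at the end is not a real one: since $\D v_i$ depends only on $v_{i-2},\ldots,v_{i+1}$ and is \emph{linear} in the extreme indices $v_{i+1}$ and $v_{i-2}$, the extreme-index variable of $\D^k v_3$ (and likewise of $\D^k v_1$, $\D^k v_2$) can only arise by applying the product rule to the unique term of $\D^{k-1}v_3$ containing the previous extreme-index variable, so its coefficient is the previous leading coefficient times one new factor---a monomial, immune to cancellation---and the exceptional set is a finite union of zero sets of nontrivial polynomials, hence Lebesgue-null. (The paper does not discuss this cancellation issue at all, so on this point your treatment is the more careful of the two.)
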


\begin{proof}[Proof of Proposition \ref{propupperLorenz96}]
Because of the definition of the observation operator, we have $\H \D^i \u = \left(\D^i u_1,\D^i u_2,\D^i u_3  \right)$. Based on the equations \eqref{diffeqgeneralform}, we have 
\[\D u_3=8-u_3+u_2 u_4 -u_1u_2,\]
and thus we are able to write
\[u_4=\left(\D u_3-8+u_3+u_1v_2\right)/v_2.\]
Due to the specific multi-diagonal structure of $\mtx{B}(\u,\u)$ (the $i$th column only depends on the $i-2,\ldots, i+1$th terms), by repeatedly expressing the derivatives $\frac{d}{d t}u_j$ for $j\ge 4$ one-by-one, we can obtain a similar deterministic expression for $u_5,\ldots, u_d$ just in terms of the derivatives
$ \left(\D^i u_1,\D^i u_2,\D^i u_3  \right)$, for $0\le i\le d-3$. The equations are valid almost surely, for every $\u$ such that $u_i\ne 0$ for every $1\le i\le d$. These explicit expressions imply that condition \eqref{eqphitideruv} holds for almost every $\u\in \BR$.
Now we are going to verify condition \eqref{eqspanderu}.  Suppose that $\u$ satisfies that  $u_i\ne 0$ for every $1\le i\le d$. Let $e_i$ be a $d$ dimensional unit vector with $1$ in coordinate $i$ and 0 elsewhere. Then $\grad u_1=e_1,\ldots, \grad u_3=e_3$. From the definition of the model, we have
\[\D u_i=-u_i-u_{i-1}u_{i+1}-u_{i-1}u_{i-2}+f_i,\]
where the indices are meant modulo $d$. This implies that 
\[\grad \D u_i=-e_{i+1}u_{i-1}-e_i-e_{i-1}(u_{i+1}+u_{i-2})-e_{i-2}u_{i-1},\]
and by our assumption on $\u$, we have
\[\mathrm{span}\left(\grad u_1,\ldots, \grad u_3, \grad \D u_1, \ldots, \grad \D u_2, \grad \D u_3\right)=\mathrm{span}(e_{d-1},e_d,e_1,\ldots, e_4).\]
By adding the higher order derivatives one by one, we obtain \eqref{eqspanderu} for every $\u$ satisfying our assumption that $u_i\ne 0$ for every $1\le i\le d$. The consequences about the smoother and the filter follow the same way as in the proof of Proposition \ref{propupperLorenz63}.
\end{proof}

\subsection{Application to systems with random coefficients}\label{secapplicationrandomcoeff}
If a dynamical system of the form \eqref{diffeqgeneralform} satisfies Assumption \ref{assder}, then our upper bounds are valid. In the previous two examples, we have shown that for two particular systems, under suitably chosen partial observations, this assumption is satisfies. In order to check how restrictive is this assumption, we have done the following experiment. We have chosen the elements of $\mtx{A}, \mtx{B}, \vct{f}, \u$ randomly, independently of each other, uniformly on the set $\{1,2,\ldots,10\}$, and checked Assumption \ref{assder} by a Mathematica code, which is available on request. We have done 100 random trials for 3 dimensional systems, with the first 3 derivatives (thus $j=3$), with only the first coordinate observed, and found that all of them satisfy Assumption \ref{assder}. We have repeated this experiment with 4 dimensional systems (with $j=4$), and obtained the same result. 

These results are consistent with the intuition that if all of the coordinates of the system interact with each other, then it should be possible to interfere the position of the system by observing only one coordinate of it with sufficiently high precision. The simulation results suggest that the set of coefficients and initial positions $\mtx{A}, \mtx{B}, \vct{f}, \u$ where Assumption \ref{assder} does not hold probably has Lebesgue measure 0 (however, proving this is beyond the scope of this paper).

\subsubsection*{Acknowledgements}
All authors were supported by the Singapore Ministry of Education AcRF tier 2 grant  R-155-000-161-112. The work of the third author has been partially supported by the EPSRC grant  EP/N023781/1. We thank the anonymous referees for their insightful comments.

\appendix

\section{Appendix}
\subsection{Proof of the existence of anti-leaf sets for the geometric model}\label{secgeoantileafproof}
In this section, we are going to prove Theorem \ref{thmgeoantileafsets}. The proof is based on the strong expansion property of $f$ ($|f'(x)|>\sqrt{2}$ for every $x\in [-1/2,1/2]\setminus \{0\}$).  Using this property, we are going to define closed intervals on $[-1/2,1/2]$ satisfying certain requirements, and then define the sets $\tU(\u,k)$ based on these intervals.

For $j\in \N$, let $f^{(j)}$ denote the composition of $f$ with itself $j$ times (with $f^{(0)}(x):=x$), and denote by $D^{(j)}_f$ the domain of $f^{(j)}(x)$. For any set $W\subset [-1/2,1/2]\setminus \{0\}$, we let $f(W):=\{f(x): x\in W\}$, and similarly, for any set $W\subset [-1/2,1/2]$, we let 
$f^{(-1)}(W):=\{x: x\in [-1/2,1/2]\setminus \{0\}, f(x)\in W\}$. 
Note that due to the particular structure of $f$, for any closed interval $I\subset [-1/2,1/2]$, $f^{(-1)}(I)$ consists of one or two closed intervals. Let $f^{(-j)}$ denote the composition of $f^{(-1)}$ with itself $j$ times. Then the domains $D^{(j)}_f$ can be expressed as
\begin{equation}
D^{(j)}_f:=[-1/2,1/2]\setminus \left(\cup_{i=0}^{j-1} f^{(-i)}(\{0\})\right).
\end{equation}

The next lemma defines the intervals $I^{(j)}_x$ and proves that they satisfy certain properties.
\begin{lem}[Definition of the intervals $I^{(j)}_x$]\label{Ijxdeflemma}
For $x\in [-1/2,1/2]$, let $d(x):=\min(|x-1/2|,|x|,|x+1/2|)$ be the distance between $x$ and the set $\{-1/2,0,1/2\}$. For any $j\in \N$, $x\in D^{(j)}_f$, let
\begin{equation}\label{deltakdef}
\delta^{(j)}_x:=\min_{0\le i\le j} \left(2^{(j-i)/2}\cdot d(f^{(i)}(x))\right).
\end{equation}
For $x\in D^{(j)}_f$, $0\le i\le j$ we define a sequence of intervals $I^{(j,i)}_x$ as follows. First, let
$I^{(j,j)}_x:=\left[f^{(j)}(x)-\frac{\delta^{(j)}_x}{2},f^{(j)}(x)+\frac{\delta^{(j)}_x}{2}\right]$. The rest of the intervals are defined iteratively, given $I^{(j,i)}_x$ for $1\le i\le j$, we define $I^{(j,i-1)}_x$ as the closed interval in the set $f^{(-1)}(I^{(j,i)}_x)$ containing $f^{(i-1)}(x)$. Finally, let $I^{(j)}_x:=I^{(j,0)}_x$.

Then for any $0\le i\le j$, the sets $f^{(i)}(I_x^{(j)})$ are closed intervals containing $f^{(i)}(x)$ that do not contain 0, and satisfy that
\begin{equation}\label{Ijxpropeq}
\inf \left\{|y|: y\in f^{(i)}(I^{(j)}_x)\right\}\ge \frac{\left|f^{(i)}(x)\right|}{2}.
\end{equation}
\end{lem}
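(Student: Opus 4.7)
The strategy is reverse induction on $i$, descending from $i=j$ down to $i=0$. The key inductive invariant has two parts: (A) $I^{(j,i)}_x$ is a well-defined closed interval lying entirely in the branch $(0,1/2]$ or $[-1/2,0)$ containing $f^{(i)}(x)$; and (B) the sharper \emph{centering} estimate that every $y\in I^{(j,i)}_x$ satisfies $|y-f^{(i)}(x)|\le 2^{-(j-i)/2}\,\delta^{(j)}_x/2$. Note that (B) is a bound on the distance from the prescribed center $f^{(i)}(x)$, not merely on the length of the interval.

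For the base case $i=j$, both (A) and (B) follow from the symmetric definition $I^{(j,j)}_x=[f^{(j)}(x)-\delta^{(j)}_x/2,f^{(j)}(x)+\delta^{(j)}_x/2]$, since by taking $i=j$ in the minimum defining $\delta^{(j)}_x$ we get $\delta^{(j)}_x\le d(f^{(j)}(x))$, so the half-width $\delta^{(j)}_x/2$ is strictly less than the distance from $f^{(j)}(x)$ to the nearest point of $\{-1/2,0,1/2\}$. For the inductive step, (A) at level $i+1$ makes $I^{(j,i+1)}_x$ a closed interval avoiding $\{-1/2,0,1/2\}$, so $f^{(-1)}(I^{(j,i+1)}_x)$ decomposes into at most two disjoint closed intervals (one per branch of $f$, by monotonicity of each branch); I define $I^{(j,i)}_x$ as the component containing $f^{(i)}(x)$, which belongs to the preimage because $f(f^{(i)}(x))=f^{(i+1)}(x)\in I^{(j,i+1)}_x$. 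This yields (A) at level $i$. For (B), the mean value theorem on the single branch containing $y$ and $f^{(i)}(x)$ produces $\xi_y$ with
\[|y-f^{(i)}(x)| \;=\; \frac{|f(y)-f^{(i+1)}(x)|}{|f'(\xi_y)|} \;\le\; \frac{2^{-(j-i-1)/2}\,\delta^{(j)}_x/2}{\sqrt{2}} \;=\; 2^{-(j-i)/2}\,\delta^{(j)}_x/2,\]
where I use the single-branch expansion $|f'|>\sqrt{2}$ from Assumption \ref{assalphabetatheta} together with inductive hypothesis (B) at level $i+1$.

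Once the induction is complete, the lemma's conclusions follow from the defining property of $\delta^{(j)}_x$ as a minimum: for each $i$, $\delta^{(j)}_x\le 2^{(j-i)/2}d(f^{(i)}(x))$, which upgrades (B) to $|y-f^{(i)}(x)|\le d(f^{(i)}(x))/2$. Since $d(f^{(i)}(x))\le|f^{(i)}(x)|$, this gives simultaneously $|y|\ge |f^{(i)}(x)|/2$ (so $0\notin I^{(j,i)}_x$) and the fact that $I^{(j,i)}_x$ lies inside the open ball of radius $d(f^{(i)}(x))$ around $f^{(i)}(x)$, hence avoids every critical point. Finally, to transfer everything to $f^{(i)}(I^{(j)}_x)$: the inclusion $f^{(i)}(I^{(j)}_x)\subset I^{(j,i)}_x$ is immediate from the construction (each $f$ maps $I^{(j,k)}_x$ into $I^{(j,k+1)}_x$), and as the continuous image of a compact interval, $f^{(i)}(I^{(j)}_x)$ is itself a closed interval containing $f^{(i)}(x)$, and inherits the required bounds from $I^{(j,i)}_x$.

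The main obstacle is obtaining the factor of $1/2$ in the centering estimate (B) rather than a bound only on the interval length $|I^{(j,i)}_x|$. Without it, one would merely conclude $|y-f^{(i)}(x)|\le d(f^{(i)}(x))$, which is generally insufficient to prevent $I^{(j,i)}_x$ from reaching $0$ when $f^{(i)}(x)$ is close to $0$ (for then $d(f^{(i)}(x))=|f^{(i)}(x)|$). The $2^{-(j-i)/2}$-contraction obtained by telescoping the per-step $\sqrt{2}$-contraction across $j-i$ preimage steps is precisely matched by the $2^{(j-i)/2}$-factor in the definition of $\delta^{(j)}_x$, producing the uniform $d(f^{(i)}(x))/2$ bound at every intermediate level.
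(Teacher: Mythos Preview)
Your proof is correct and follows essentially the same approach as the paper's (very terse) argument, which simply notes that the $\sqrt{2}$-expansion of $f$ forces the intervals $I^{(j,i)}_x$ to shrink by a factor $\sqrt{2}$ at each preimage step and then appeals to the definition of $\delta^{(j)}_x$. Your explicit centering invariant (B)---bounding $|y-f^{(i)}(x)|$ rather than merely the length of $I^{(j,i)}_x$---supplies exactly the detail the paper leaves implicit, and your final paragraph correctly identifies why that extra factor of $1/2$ is not optional.
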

\begin{proof}
Since $|f'(x)|\ge \sqrt{2}$ for every $x\in D^{(1)}_f$, it follows that the length of the interval $I^{(j,i-1)}_x$ is shorter than that the length of the interval $I^{(j,i)}_x$ by at least a factor of $\sqrt{2}$ for every $1\le i\le j$. The stated properties of $I_x^{(j)}$ are now implied by the definition of $\delta^{(j)}_x$.
\end{proof}

Now we are going to define the sets $\tU(\u,k)$ for every $k\in \N, \u\in \Lambdageo$. Let $\TT(\u,k)$ denote the number of time points $t\in (0, t_k]$ such that $\u(t)\in S$ (i.e.\ the number of turns taken by the geometric model started from $\u$ until time $t_k$). Let 
\[U^*(\u,k):=\{\v\in S: v_1\in I_{O_1(\u)}^{(\TT(u,k))}, v_2=O_2(\u)\},\]
that is a small line segment on $S$ in direction parallel to the axis $u_1$ containing the point $O(\u)$.
For any $\v,\w\in \R^3$, we denote by $[\v,\w]:=\{a \v+(1-a)\w: a\in [0,1]\}$ the line segment between $\v$ and $\w$. We define the \emph{anti-leaf sets} by propagating this set forward by $\tau_O(\u)$ time, and imposing an additional condition as
\begin{align}\label{geoantileafsetdefeq}
\tU(\u,k):=&\bigg\{ \w(t_k+\tau_O(\u)): \w\in U^*(\u,k) \text{ such that for every }\vct{z}\in [O(\u),\w],\\
\nonumber&\|\vct{z}(t_k+\tau_O(\u))-\u(t_k)\|_{\infty}\le 0.05\bigg\}.
\end{align}
This additional condition will guarantee that if $\u(t_k)$ is sufficiently near $S$, then $\Psi^{\mathrm{geo}}(\U^*(\u,k))\subset W^{S}_{0.1}$, which will be useful in the following argument.

The next two lemmas bound the difference $\|\v(t)-\u(t)\|_{\infty}$ for two points $\u,\v\in S^*$.

\begin{lem}[Maximal distance between two paths by the first return]\label{maxdistancefirstretlemma}
Let $\u$ and $\v$ be two points on $S^*$ satisfying that $|u_1-v_1|\le \frac{|u_1|}{2}$. Then for $0\le t\le \tau(\u)$ (the time it takes to return to $S$ from $\u$), we have 
\begin{equation}\label{maxdisteq}
\|\v(t)-\u(t)\|_{\infty}\le C_{1}\left(\frac{|u_1-v_1|}{|u_1|}+|u_2-v_2|\right),
\end{equation}

for a constant $C_1$ only depending on the parameters of the model.
\end{lem}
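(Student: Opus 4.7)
The plan is to decompose $[0,\tau(\u)]$ according to which of the two phases of the geometric flow (linear descent from $S^*$ to $\Sigma$ or rotation from $\Sigma$ back to $S$) each of $\u(t),\v(t)$ is in, and to estimate $\|\v(t)-\u(t)\|_{\infty}$ piecewise using the explicit formulas \eqref{eqflow}, \eqref{Ldefeq}, and \eqref{SigmaptoSeveq}--\eqref{SigmamtoSeveq}. I will assume without loss of generality that $|u_1|\le|v_1|$; the hypothesis $|u_1-v_1|\le|u_1|/2$ then gives $\sgn(u_1)=\sgn(v_1)$ and $|v_1|/|u_1|\in[1/2,3/2]$, so that the time-lag $\Delta:=\tau_\Sigma(\u)-\tau_\Sigma(\v)=(1/\lambda_1)\log(|v_1|/|u_1|)$ is controlled, via the mean value theorem for $\log$ on $[1/2,3/2]$, by $\Delta\le (2/\lambda_1)\cdot |u_1-v_1|/|u_1|$. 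This bound on $\Delta$ will drive all subsequent estimates.

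On $[0,\tau_\Sigma(\v)]$ both paths remain in the linear phase, and \eqref{eqflow} gives directly $|u_1(t)-v_1(t)|=|u_1-v_1|e^{\lambda_1 t}\le |u_1-v_1|/|v_1|\le 2|u_1-v_1|/|u_1|$, $|u_2(t)-v_2(t)|\le |u_2-v_2|$, and equal third coordinates (both paths start with $u_3=v_3=1$). Comparing the exit points via \eqref{Ldefeq}, the first coordinates of $L(\u),L(\v)$ agree by the sign condition; the mean value theorem applied to $x\mapsto x^\alpha$ on $[|u_1|/2,3|u_1|/2]$, together with $\alpha<1$ and $|u_1|\le 1/2$, yields $\bigl||u_1|^\alpha-|v_1|^\alpha\bigr|\le C_\alpha |u_1-v_1|/|u_1|$, and a similar estimate gives $\bigl|u_2|u_1|^\beta-v_2|v_1|^\beta\bigr|\le |u_2-v_2|+C_\beta |u_1-v_1|/|u_1|$. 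Consequently $\|L(\u)-L(\v)\|_\infty\le C(|u_1-v_1|/|u_1|+|u_2-v_2|)$. On the subinterval $[\tau_\Sigma(\u),\tau(\v)]$ where both paths are in the rotation phase, differentiating the formulas \eqref{SigmaptoSeveq}--\eqref{SigmamtoSeveq} in $\w$ shows that $\w\mapsto\Psi^{\mathrm{rot}}_s(\w)$ is Lipschitz in $\|\cdot\|_\infty$ with constant $\theta$, uniformly in $s\in[0,3\pi/2]$, so $\|\v(t)-\u(t)\|_\infty$ is at most $\theta\|L(\u)-L(\v)\|_\infty$ plus a time-shift correction of size $\vmaxgeo \Delta$ coming from the fact that $\v$ is at rotation-time $s+\Delta$ when $\u$ is at rotation-time $s$.

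Finally, on the two short transitional subintervals $[\tau_\Sigma(\v),\tau_\Sigma(\u)]$ and $[\tau(\v),\tau(\u)]$ (each of length $\Delta$), where one path is in the linear phase and the other in the rotation phase, the uniform speed bound \eqref{vmaxgeodefeq} contributes at most $2\vmaxgeo \Delta\le (4\vmaxgeo/\lambda_1)\cdot |u_1-v_1|/|u_1|$ to the distance. Combining the four estimates yields \eqref{maxdisteq} with $C_1$ depending only on $\lambda_1,\lambda_2,\lambda_3,\alpha,\beta,\theta$ and $\vmaxgeo$. The main bookkeeping difficulty will be tracking the time-lag $\Delta$ consistently across the four phases and carrying the Lipschitz factor $\theta$ through the rotation phase without confusion; each individual estimate is a direct calculation from the explicit formulas.
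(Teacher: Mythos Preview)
Your proposal is correct and follows essentially the same approach as the paper: bound the distance during the common linear phase via \eqref{eqflow}, control the exit points $L(\u),L(\v)$ on $\Sigma$, use the Lipschitz property of $\Psi^{\mathrm{rot}}_s$ during the rotation phase, and absorb the time-lag $\Delta$ via the uniform speed bound $\vmaxgeo$. The only cosmetic difference is that you organise the argument as an explicit four-subinterval decomposition, whereas the paper compares the two paths as if synchronised (both started on $\Sigma$ simultaneously) and then adds a single $\vmaxgeo\Delta$ correction at the end; the resulting constants are equivalent.
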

\begin{proof}[Proof of Lemma \ref{maxdistancefirstretlemma}]
For the linear part of the dynamics, we have
\[\Psi^{\mathrm{lin}}_t(\u)-\Psi^{\mathrm{lin}}_t(\v)= \left((u_1-v_1) e^{\lambda_1 t}, (u_2-v_2) e^{-\lambda_2 t}, 0 \right)\]
for $0\le t\le \min[\tau_{\Sigma}(u_1),\tau_{\Sigma}(v_1)]$.
Thus until the time the first one of the paths reaches $\Sigma$, their distance is bounded as
\begin{equation}\label{distboundpart1eq}
\sup_{0\le t\le \min[\tau_{\Sigma}(\u),\tau_{\Sigma}(\v)]}\|\u(t)-\v(t)\|_{\infty}\le \frac{|u_1-v_1|}{|u_1|}+|u_2-v_2|.\end{equation}
 The difference between the time they take from $S^*$ to $\Sigma$ can be bounded as
\begin{equation}\label{timedelayeq}|\tau_{\Sigma}(u_1)-\tau_{\Sigma}(v_1)|=\frac{1}{\lambda_1} |\log(v_1/u_1)|\le \frac{2}{\lambda_1} \frac{|u_1-v_1|}{|u_1|}.
\end{equation}
The two paths started at $\u$ and $\v$ will reach $\Sigma$ at $L(\u)$ and $L(\v)$ (see \eqref{Ldefeq}), and the distance of these two points can be bounded as
\begin{align}\nonumber|L_1(\u)-L_1(\v)|&=\left| u_2|u_1|^{\beta}-v_2|v_1|^{\beta}\right|\le  |u_2-v_2| |u_1|^{\beta}+ |v_2| \left||v_1|^{\beta}-|u_1|^{\beta}\right|
\\
\label{L1bound}&\le \frac{1}{2}|u_2-v_2| + \frac{2}{3} |u_1-v_1|,\\
\label{L2bound}|L_2(\u)-L_2(\v)|&=\left| |u_1|^{\alpha}-|u_2|^{\alpha}\right|\le \frac{2\alpha |u_1-v_1|}{|u_1|}.
\end{align}
For the rotation part of the dynamics, by \eqref{SigmaptoSeveq} and \eqref{SigmamtoSeveq}, 
we have for any $\w,\vct{z}\in \Sigma_+$ or $\w,\vct{z}\in \Sigma_-$, $0\le s\le \frac{3\pi}{2}$, 
\begin{align*}\|\Psi_s^{\mathrm{rot}}(\w)-\Psi_s^{\mathrm{rot}}(\vct{z})\|_{\infty}&\le \max(|w_2-z_2|,\theta |w_3-z_3|)\\
&\le |w_2-z_2|+2 |w_3-z_3|,
\end{align*}
so the distance between these paths can not grow by more than by a factor of 2 until they reach $S$. Thus  two paths started at points $L(\u)$ and $L(\v)$ on $\Sigma$ will reach $S$ at the same time, and their distance during this time is bounded as 
\begin{equation}\label{distboundpart2eq}\max_{0\le t\le (3/2)\pi}\|\Psi_t^{\mathrm{geo}}(L(\u))-\Psi_t^{\mathrm{geo}}(L(\v))\|_{\infty}\le 2\left(\frac{1}{2}|u_2-v_2| + \frac{2}{3} |u_1-v_1|+\frac{2\alpha |u_1-v_1|}{|u_1|}\right).\end{equation}
However, the paths started at $\u$ and $\v$  reach $\Sigma$ at different time points, so we still need to account for the time delay. From \eqref{vmaxgeodefeq} we know that the speed of the dynamics is bounded by $\vmaxgeo$, so by equations \eqref{distboundpart1eq}, \eqref{distboundpart2eq}, \eqref{timedelayeq} ands the triangular inequality, the maximal distance between the paths can be bounded as
\begin{align*}&\sup_{0\le t\le \tau(\u)}\|\v(t)-\u(t)\|_{\infty}\le 2\left(\frac{1}{2}|u_2-v_2| + \frac{2}{3} |u_1-v_1|+\frac{2\alpha |u_1-v_1|}{|u_1|}\right)\\
&+|\tau_{\Sigma}(u_1)-\tau_{\Sigma}(v_1)| \vmaxgeo\le 
|u_2-v_2| + \frac{4}{3} |u_1-v_1|+\frac{4\alpha |u_1-v_1|}{|u_1|}+
\frac{2\vmaxgeo}{\lambda_1} \frac{|u_1-v_1|}{|u_1|},
\end{align*}
and the stated result follows with $C_1:=\frac{2}{3}+4\alpha+\frac{2\vmaxgeo}{\lambda_1}$.
\end{proof}

\begin{lem}[Bounding the maximum distance between two paths until their $l$th return]\label{lemmamaxdistanceuntillthreturn}
Let $l\in \N$, and $\u\in S^*$ be such that $\u(t)$ crosses $S$ at least $l+1$ times for $t>0$, and $\v\in S^*$ be such that $u_2=v_2$, and $v_1\in I^{(l)}_{u_1}$ (defined according to Lemma \ref{Ijxdeflemma}). 
Let $T_1(\u), T_2(\u), \ldots$ be the subsequent return times of $\u(t)$ to $S$ (and denote $T_0(\u):=0$).  Then for any $0\le j \le l$, $t\in [T_j(\u), T_{j+1}(\u)]$, we have
\begin{equation}
\|\u(t)-\v(t)\|_{\infty}\le C^{\mathrm{ret}} |f^{(l)}(u_1)-f^{(l)}(v_1)| \sum_{i=0}^{j} \frac{2^{-(l-i)/2}}{|f^{(i)}(u_1)|},
\end{equation}
for some constant $C^{\mathrm{ret}}<\infty$ only depending on the parameters of the model.
\end{lem}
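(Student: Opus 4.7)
The strategy is to apply Lemma \ref{maxdistancefirstretlemma} iteratively over the consecutive return loops $[T_j(\u),T_{j+1}(\u)]$, while tracking three quantities across returns: the first-coordinate gap $|f^{(j)}(u_1)-f^{(j)}(v_1)|$, the second-coordinate gap $\Delta_j:=|u_2^{(j)}-v_2^{(j)}|$ at the $j$th returns of $\u$ and $\v$ (where $u_2^{(j)},v_2^{(j)}$ are their $u_2$-values on $S$), and the return-time delay $s_j:=T_j(\v)-T_j(\u)$. The first input is structural. Since $v_1\in I^{(l)}_{u_1}$, the interval $f^{(j)}(I^{(l)}_{u_1})=I^{(l,j)}_{u_1}$ lies in one monotone branch of $f$, has length at most $d(f^{(j)}(u_1))\le|f^{(j)}(u_1)|$, and is at distance at least $|f^{(j)}(u_1)|/2$ from $0$ by \eqref{Ijxpropeq}. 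Hence $f^{(j)}(u_1)$ and $f^{(j)}(v_1)$ share sign with $|f^{(j)}(v_1)|/|f^{(j)}(u_1)|\in[1/2,2]$, and $|f'|\ge\sqrt 2$ on each monotone branch yields the key contraction-expansion estimate
\begin{equation*}
|f^{(j)}(u_1)-f^{(j)}(v_1)|\le 2^{-(l-j)/2}\,|f^{(l)}(u_1)-f^{(l)}(v_1)|.
\end{equation*}

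The second step is to derive recursions for $\Delta_j$ and $s_j$. Using \eqref{geq}, the decomposition $u_2^{(j)}|f^{(j)}(u_1)|^\beta-v_2^{(j)}|f^{(j)}(v_1)|^\beta=u_2^{(j)}(|f^{(j)}(u_1)|^\beta-|f^{(j)}(v_1)|^\beta)+(u_2^{(j)}-v_2^{(j)})|f^{(j)}(v_1)|^\beta$, the mean value theorem applied to $x\mapsto x^\beta$, together with $|u_2^{(j)}|\le 1/2$, $|f^{(j)}(u_1)|,|f^{(j)}(v_1)|\le 1/2$, and Assumption \ref{assalphabetatheta}, give $\Delta_{j+1}\le 2^{-\beta}\Delta_j+(\beta/2)|f^{(j)}(u_1)-f^{(j)}(v_1)|$. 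Since $\Delta_0=|u_2-v_2|=0$ by hypothesis, telescoping yields $\Delta_j\le(\beta/2)\sum_{i=0}^{j-1}|f^{(i)}(u_1)-f^{(i)}(v_1)|$. Similarly, \eqref{eqreturntime} together with the ratio bound and $|\log(1+\xi)|\le(2\log 2)|\xi|$ on $[-1/2,1]$ gives $|s_{j+1}-s_j|\le (2\log 2/\lambda_1)\,|f^{(j)}(u_1)-f^{(j)}(v_1)|/|f^{(j)}(u_1)|$, so
\begin{equation*}
|s_j|\le \frac{2\log 2}{\lambda_1}\sum_{i=0}^{j-1}\frac{|f^{(i)}(u_1)-f^{(i)}(v_1)|}{|f^{(i)}(u_1)|}.
\end{equation*}

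Next I would apply Lemma \ref{maxdistancefirstretlemma} to the pair of starting points $\u_j:=(f^{(j)}(u_1),u_2^{(j)},1)$ and $\v_j:=(f^{(j)}(v_1),v_2^{(j)},1)$, both on $S^*$. The lemma as stated requires $|f^{(j)}(u_1)-f^{(j)}(v_1)|\le |f^{(j)}(u_1)|/2$; however, inspection of its proof shows that the weaker two-sided ratio condition $|f^{(j)}(v_1)/f^{(j)}(u_1)|\in[1/2,2]$ (which we have established) suffices, at the price of enlarging the absolute constant $C_1$ in its conclusion. This delivers
\begin{equation*}
\sup_{0\le t\le\tau(\u_j)}\|\u(T_j(\u)+t)-\v(T_j(\v)+t)\|_\infty\le C_1\Big(\frac{|f^{(j)}(u_1)-f^{(j)}(v_1)|}{|f^{(j)}(u_1)|}+\Delta_j\Big),
\end{equation*}
and a triangle-inequality correction using the speed bound \eqref{vmaxgeodefeq} adds a $\vmaxgeo|s_j|$ term, converting this into a bound on $\|\u(t)-\v(t)\|_\infty$ for all $t\in[T_j(\u),T_{j+1}(\u)]$.

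Substituting the recursions for $\Delta_j$ and $|s_j|$, and using $|f^{(i)}(u_1)|\le 1/2$ to replace each bare $|f^{(i)}(u_1)-f^{(i)}(v_1)|$ by $(1/2)|f^{(i)}(u_1)-f^{(i)}(v_1)|/|f^{(i)}(u_1)|$, the three contributions collapse into a single sum $\sum_{i=0}^{j}|f^{(i)}(u_1)-f^{(i)}(v_1)|/|f^{(i)}(u_1)|$ with prefactor $C^{\mathrm{ret}}:=\max(C_1,\;C_1\beta/4+2\vmaxgeo(\log 2)/\lambda_1)$. Invoking the contraction-expansion estimate from the opening paragraph converts this into $|f^{(l)}(u_1)-f^{(l)}(v_1)|\sum_{i=0}^{j}2^{-(l-i)/2}/|f^{(i)}(u_1)|$, which is the claimed form. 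The main obstacle is bookkeeping the time mismatch $s_j$: Lemma \ref{maxdistancefirstretlemma} compares paths starting simultaneously from $S^*$, whereas $\v$ enters $S^*$ at $T_j(\v)\neq T_j(\u)$, and the induced $\vmaxgeo|s_j|$ correction must carry the same $1/|f^{(i)}(u_1)|$ weighting as the other two contributions for the induction to close into the stated geometric-sum structure.
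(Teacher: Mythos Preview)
Your proposal is correct and follows essentially the same route as the paper: iterate Lemma~\ref{maxdistancefirstretlemma} across return loops, control the first-coordinate gap via the expansion $|f'|\ge\sqrt 2$ on each $I^{(l,i)}_{u_1}$, and handle the second-coordinate gap and the accumulated return-time delay separately before recombining via the speed bound $\vmaxgeo$ and the triangle inequality.

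There is one bookkeeping difference worth flagging. The paper controls the second-coordinate gap by proving the pointwise comparison $\Delta_2(j)\le\Delta_1(j)$ directly: it combines the recursion $\Delta_2(i{+}1)\le\tfrac12\Delta_2(i)+\tfrac23\Delta_1(i)$ with the growth $\Delta_1(i{+}1)\ge\sqrt2\,\Delta_1(i)$ to get $\Delta_2(i{+}1)\le\tfrac12\Delta_2(i)+\tfrac12\Delta_1(i{+}1)$, and closes by induction. You instead telescope your recursion to the sum $\Delta_j\le(\beta/2)\sum_{i<j}\Delta_1(i)$ and then absorb each $\Delta_1(i)$ into $\Delta_1(i)/|f^{(i)}(u_1)|$ via $|f^{(i)}(u_1)|\le 1/2$. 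Both routes land on the same final sum; the paper's comparison is slightly cleaner but yours is perfectly adequate.

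You are also more careful than the paper on one point: the literal hypothesis $|f^{(j)}(u_1)-f^{(j)}(v_1)|\le|f^{(j)}(u_1)|/2$ of Lemma~\ref{maxdistancefirstretlemma} is not guaranteed by $v_1\in I^{(l)}_{u_1}$ (the interval length is only bounded by $d(f^{(j)}(u_1))\le|f^{(j)}(u_1)|$), and the paper applies the lemma without comment. Your observation that the proof of Lemma~\ref{maxdistancefirstretlemma} goes through under the two-sided ratio condition $|f^{(j)}(v_1)/f^{(j)}(u_1)|\in[1/2,2]$, which does follow from \eqref{Ijxpropeq}, is correct and closes that small gap.
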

\begin{proof}[Proof of Lemma \ref{lemmamaxdistanceuntillthreturn}]
Based on the definition of $I_{u_1}^{(l)}$, it follows that $\v(t)$ also crosses $S$ at least $l$ times for $t>0$. For $0\le i\le l$, let $\Delta_1(i):=|u_1(T_i(\u))-v_1(T_i(\v))|=|f^{(i)}(u_1)-f^{(i)}(v_1)|$ and  $\Delta_2(i):=|u_2(T_i(\u))-v_2(T_i(\v))|$ be the differences between the return points on the plane. Since the coordinate $u_2$ evolves in a linear fashion during the rotation part of the dynamics, from \eqref{L1bound} we have that
\begin{equation}\label{delta2bound1}
\Delta_2(i+1)\le \frac{1}{2} \Delta_2(i) + \frac{2}{3} \Delta_1(i) \text{ for }0\le i<l.
\end{equation}
By the definition of  $I_{u_1}^{(l)}$ we know that the intervals $f^{(i)}(I_{u_1}^{(l)})$ do not cross $0$ for $0\le i\le l$, and since $|f'(x)|\ge \sqrt{2}$ for every $x\in [-1/2,1/2]\setminus 0$, it follows that $\Delta_1(i+1)\ge \sqrt{2} \Delta_1(i)$ for every $0\le i<l$. By combining this with 
\eqref{delta2bound1} and using the fact that $\frac{2}{3\sqrt{2}}<\frac{1}{2}$ it follows that
\begin{equation}\label{delta2bound2}
\Delta_2(i+1)\le \frac{1}{2} \Delta_2(i) + \frac{1}{2} \Delta_1(i+1) \text{ for }0\le i<l.
\end{equation}
From this by induction we can obtain that $\Delta_2(i)\le \Delta_1(i)$ for any $0\le i\le l$ (
by the initial assumption on $\v$, we have $\Delta_2(0)=0$, so this holds for $i=0$). Thus the difference in the second coordinate is upper bounded by the difference in the first one.

For $1\le i\le l+1$, let $\tau_i(\u):=T_i(\u)-T_{i-1}(\u)$ and define $\tau_i(\v)$ analogously.
Based on \eqref{timedelayeq}, the time delay that is created between the two paths can be bounded as
\begin{align*}|\tau_i(\u)-\tau_i(\v)|&\le \frac{2}{\lambda_1} \frac{|f^{(i-1)}(u_1)-f^{(i-1)}(v_1)|}{|f^{(i-1)}(u_1)|}\\
&\le \frac{2}{\lambda_1} |f^{(l)}(u_1)-f^{(l)}(v_1)| \frac{2^{-(l-(i-1))/2}}{|f^{(i-1)}(u_1)|},
\end{align*}
thus for any $1\le j\le l+1$, we have
\begin{equation}\label{Tidiffmaxeq}\max_{1\le i\le j}|T_i(\u)-T_i(\v)|\le \frac{2}{\lambda_1} |f^{(l)}(u_1)-f^{(l)}(v_1)| \sum_{i=0}^{j-1}\frac{2^{-(l-i)/2}}{|f^{(i)}(u_1)|}.
\end{equation}
Moreover, using Lemma \ref{maxdistancefirstretlemma} and the fact that $\Delta_2(i)\le
\Delta_1(i)$, for any $0\le j\le l$, we have
\begin{align*}
\sup_{0\le r\le \tau_{j+1}(\u)} \|\u(T_{j}(\u)+r)-\v(T_{j}(\v)+r)\|_{\infty}&\le C_1\left(\frac{\left |f^{(j)}(u_1)-f^{(j)}(v_1)\right|}{|f^{(j)}(u_1)|}\cdot 2\right)\\
&\le 2C_1 |f^{(l)}(u_1)-f^{(l)}(v_1)| \frac{2^{-(l-j)/2}}{|f^{(j)}(u_1)|}.
\end{align*}
The statement of the lemma now follows by \eqref{vmaxgeodefeq} and the triangle inequality  with 
$C^{\mathrm{ret}}:=\max\left(2C_1,\frac{2}{\lambda_1}\cdot \vmaxgeo\right)$.
\end{proof}

The following lemma lower bounds the distance of two paths at time points when they are close to $S$. 
\begin{lem}[Distance of two paths near $S$]\label{lemmadistpathnearS}
Let $T_l(\u)$ be the $l$th return time from $\u$ to $S$ (i.e. the $l$th smallest $t>0$ such that $\u(t)\in S$). Let 
\[h^{S}_{\max}:=\frac{1}{20 \vmaxgeo}\text{ and }C^{S}:=\frac{\left(\vmingeo\right)^2}{4\vmaxgeo(\vmaxgeo+\vmingeo)}\cdot \exp\left(-h^{S}_{\max}\cdot \lambda_2 \right).\]
Then for any $l\ge 1$, $k\ge 1$ such that $t_k\in [T_l(\u),T_l(\u)+h^{S}_{\max}]$, for any $\v\in \tU(\u,k)$, we have 
\[\|\v(t_k)-\u(t_k)\|_{\infty}\ge C^{S} \left|f^{(l)}(O_1(\u))-f^{(l)}(O_1(\v))\right|.\]
\end{lem}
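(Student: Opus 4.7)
The plan is to exploit that for $t_k \in [T_l(\u), T_l(\u) + h^{S}_{\max}]$, the path $\u(t)$ has just made its $l$-th return to $S$ and is in the linear phase (governed by $\Psi^{\mathrm{lin}}_t$), while $\v(t)$ is either also in its $l$-th linear phase or still in the preceding rotation phase. Throughout, I will use two structural inputs. First, by the definition of $U^*(\u, k)$ via the interval $I_{O_1(\u)}^{(l)}$ and Lemma \ref{Ijxdeflemma}, $f^{(l)}(O_1(\u))$ and $f^{(l)}(O_1(\v))$ have the same sign and $|f^{(l)}(O_1(\v))| \ge \tfrac{1}{2}|f^{(l)}(O_1(\u))|$, so neither return first-coordinate can be much smaller than the other. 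Second, the additional constraint built into \eqref{geoantileafsetdefeq} keeps both paths within $W^{S}_{0.1}$ on the interval $[T_l(\u), T_l(\u) + h^{S}_{\max}]$, so the lower bound \eqref{vmingeodefeq} on $\bigl|\frac{d u_3}{dt}\bigr|$ applies throughout.

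Set $D := |f^{(l)}(O_1(\u)) - f^{(l)}(O_1(\v))|$, $\delta_u := t_k - T_l(\u) \in [0, h^{S}_{\max}]$, and $T := |T_l(\u) - T_l(\v)|$. The argument splits on whether $T_l(\v) \le t_k$ (Case A) or $T_l(\v) > t_k$ (Case B). In Case A, both paths lie in the linear phase, and I expand using $\Psi^{\mathrm{lin}}$ as
\[
u_1(t_k) - v_1(t_k) = \bigl(f^{(l)}(O_1(\u)) - f^{(l)}(O_1(\v))\bigr)\,e^{\lambda_1 \delta_v} + f^{(l)}(O_1(\u))\bigl(e^{\lambda_1 \delta_u} - e^{\lambda_1 \delta_v}\bigr),
\]
with $\delta_v := t_k - T_l(\v)$. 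By the mean value theorem the second summand is bounded in absolute value by $|f^{(l)}(O_1(\u))|\,\lambda_1 e^{\lambda_1 h^{S}_{\max}}\,T \le \tfrac{1}{2}\vmaxgeo\,T$, using $|f^{(l)}| \le \tfrac{1}{2}$ and $\lambda_1 \le \vmaxgeo$. If $T \le D/(2\vmaxgeo)$, I conclude $|u_1(t_k) - v_1(t_k)| \ge D/2$. Otherwise I use \eqref{vmingeodefeq} to obtain $|u_3(t_k) - v_3(t_k)| \ge \vmingeo\,T \ge \frac{(\vmingeo)^2}{2\vmaxgeo(\vmaxgeo + \vmingeo)}\,D$, which is precisely the algebraic form of $C^S$.

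In Case B, $\v(t_k)$ is still in the rotation phase, so $v_3(t_k) > 1$ while $u_3(t_k) \le 1$; I split $|u_3(t_k) - v_3(t_k)| = (v_3(t_k) - 1) + (1 - u_3(t_k))$, bounding the second summand by $\vmingeo\,\delta_u$ and, using the explicit rotation formulae \eqref{SigmaptoSeveq}/\eqref{SigmamtoSeveq} together with Assumption \ref{assalphabetatheta} (which makes $r(s)$ bounded below by a positive constant), the first summand by a constant multiple of $T_l(\v) - t_k$. Since these two time intervals sum to $T$, the same thresholding on $T$ versus $D$ as in Case A closes the argument. The remaining factor $e^{-h^{S}_{\max}\lambda_2}$ in $C^S$ enters when I convert the bounds on individual coordinates into an $L^\infty$ bound on $\v(t_k) - \u(t_k)$: the $u_2$-coordinates start equal on $S$ (by construction of $U^*(\u,k)$) and are contracted by at most this factor along the linear phase, so the $L^\infty$-distance is safely governed by the first and third coordinates.

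The main obstacle I foresee is the Case A sub-regime where $T \asymp D$: the two terms in the decomposition of $u_1(t_k) - v_1(t_k)$ can almost cancel, and extracting a uniform constant $C^S$ requires combining the lower bound $|f^{(l)}(O_1(\v))| \ge \tfrac{1}{2}|f^{(l)}(O_1(\u))|$ (which, via $|f^{(l)}| \le \tfrac{1}{2}$, controls the cancelling error term) with the lower bound \eqref{vmingeodefeq} on $\bigl|\frac{d u_3}{dt}\bigr|$ in a balanced threshold on $T$. This is exactly what produces a constant depending only on the model parameters $\vmaxgeo$, $\vmingeo$, $\lambda_2$, and $h^{S}_{\max}$, independently of $\u$ and $l$.
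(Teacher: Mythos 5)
Your strategy is sound and rests on exactly the same three ingredients as the paper's proof --- the speed ceiling $\vmaxgeo$, the minimum descent rate $\vmingeo$ near $S$ from \eqref{vmingeodefeq}, the sign/size control on $f^{(l)}(I^{(l)}_{O_1(\u)})$ from Lemma \ref{Ijxdeflemma}, and the $0.05$-constraint in \eqref{geoantileafsetdefeq} --- but it is organised differently. The paper never looks at individual coordinates at time $t_k$: it first converts $D=|f^{(l)}(O_1(\u))-f^{(l)}(O_1(\v))|$ into a lower bound on the \emph{full vector} separation at the single time $T_l(\u)$, namely $\|\u(T_l(\u))-\v(T_l(\u))\|_{\infty}\ge \frac{\vmingeo}{\vmingeo+\vmaxgeo}D$ (combining $D\le\|\cdot\|_\infty+T\vmaxgeo$ with $T\le\|\cdot\|_\infty/\vmingeo$), and then propagates that separation forward over $[T_l(\u),t_k]$ using the Gr\"onwall-type bound \eqref{geoGeq} for the linear flow and the speed bound for the residual rotation phase; that forward propagation is where the factor $e^{-\lambda_2 h^{S}_{\max}}$ in $C^S$ actually comes from. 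Your version replaces forward propagation by direct coordinate estimates at $t_k$ (an explicit exponential decomposition of $u_1-v_1$, and the monotone descent of the third coordinate), with the same underlying dichotomy --- small time lag $T$ means the first-coordinate separation of order $D$ survives, large $T$ means the third coordinates differ by at least $\vmingeo T$. Your route in fact yields a slightly better constant, since the first-coordinate gap is expanded (not contracted) by $e^{\lambda_1\delta_v}\ge 1$; your remark that $e^{-h^S_{\max}\lambda_2}$ is needed to pass to the $L^\infty$ bound is therefore off the mark (a lower bound on one coordinate already lower-bounds the sup norm), but this only means you prove more than $C^S$ requires.

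Two spots need tightening. First, in Case B the thresholding is \emph{not} literally ``the same as in Case A'': the linear-flow decomposition of $v_1(t_k)$ is unavailable there. What saves you is that Case B forces $\delta_u<T$ and $T_l(\v)-t_k<T$, so both first coordinates are within $\vmaxgeo T$ of their return values and $|u_1(t_k)-v_1(t_k)|\ge D-2\vmaxgeo T$; this requires the threshold $T\le D/(4\vmaxgeo)$ rather than $D/(2\vmaxgeo)$ to conclude $\ge D/2$. Second, in Case A you should justify that $\delta_v$ stays small enough for $e^{\lambda_1\delta_v}$ to be $O(1)$ and for $\v(t_k)$ to remain in $W^S_{0.1}$ in its $l$-th linear phase; both follow from the constraint $\|\v(t_k)-\u(t_k)\|_\infty\le 0.05$ in \eqref{geoantileafsetdefeq} together with $u_3(t_k)\ge e^{-\lambda_3 h^S_{\max}}\ge 0.95$, which forces $v_3(t_k)\ge 0.9$ and hence $\lambda_3\delta_v\le\log(10/9)$. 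With these repairs the argument closes and delivers a constant at least as large as $C^S$.
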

\begin{proof}
As in the proof of \eqref{eqpathdistancebound}, using Gr\"{o}nwall's inequality, and the fact that $0<\lambda_3<\lambda_1<\lambda_2$, one can show that for any $t\ge 0$, $\v,\w\in \R^3$,
\begin{equation}\label{geoGeq}
\|\v-\w\|_{\infty}\cdot \exp(-\lambda_2 t)\le \|\Psi_t^{\mathrm{lin}}(\v)-\Psi_t^{\mathrm{lin}}(\w)\|_{\infty}\le 
\|\v-\w\|_{\infty}\cdot \exp(\lambda_2 t).
\end{equation}
Based on the definition \eqref{vmaxgeodefeq}, we can see that for any $\v\in \tU(\u,k)$, we have
\[|f^{(l)}(O_1(\u))-f^{(l)}(O_1(\v))|\le \|\u(T_l(\u))-\v(T_l(\u))\|_{\infty}+ |T_l(\u)-T_l(\v)|\cdot \vmaxgeo.\]
From the definition of $\tU(\u,k)$ and \eqref{vmingeodefeq}, it follows that $|T_l(\u)-T_l(\v)|\le \|\u(T_l(\u))-\v(T_l(\u))\|_{\infty}\cdot \vmaxgeo$, thus 
\begin{equation}
\|\u(T_l(\u))-\v(T_l(\u))\|_{\infty}\ge |f^{(l)}(O_1(\u))-f^{(l)}(O_1(\v))|\cdot \frac{\vmingeo}{\vmingeo+\vmaxgeo}.
\end{equation}
Let $\rho(\u,\v):=|f^{(l)}(O_1(\u))-f^{(l)}(O_1(\v))|\cdot \frac{\vmingeo}{\vmingeo+\vmaxgeo}$. If $v_3(T_l(\u))\le 1$ (thus $\v(T_l(\u))$ is on $S$ or below $S$), then by \eqref{geoGeq}, for every $t_k\in [T_l(\u),T_l(\u)+h^{S}_{\max}]$, we have
\begin{equation}\label{rhouveq1}
\|\u(t_k)-\v(t_k)\|_{\infty}\ge \exp\left(-\lambda_2 h^{S}_{\max}\right) \rho(\u,\v).
\end{equation}
For $T_{l}(\u)\le t\le T_{l}(\u)+\frac{\rho(\u,\v)}{4\vmaxgeo}$, for 
\begin{equation}\label{rhouveq2}
\|\u(t)-\v(t)\|_{\infty}\ge \frac{\rho(\u,\v)}{2}.
\end{equation}
If $\frac{\rho(\u,\v)}{4\vmaxgeo}<h^{S}_{\max}$, and $v_3\left(T_l(\u)+\frac{\rho(\u,\v)}{4\vmaxgeo}\right)\le 1$, then by \eqref{geoGeq}, for every $t_k\in [T_l(\u),T_l(\u)+h^{S}_{\max}]$, we have
\begin{equation}\label{rhouveq3}
\|\u(t_k)-\v(t_k)\|_{\infty}\ge \exp\left(-\lambda_2 h^{S}_{\max}\right) \frac{\rho(\u,\v)}{2}.
\end{equation}
Finally, if $\frac{\rho(\u,\v)}{4\vmaxgeo}<h^{S}_{\max}$, and $v_3\left(T_l(\u)+\frac{\rho(\u,\v)}{4\vmaxgeo}\right)> 1$, then $u_3\left(t\right)\le 1-\rho(\u,\v) \cdot \frac{\vmingeo}{4\vmaxgeo}$ for 
$t\in [T_l(\u)+\frac{\rho(\u,\v)}{4\vmaxgeo},T_l(\u)+h^{S}_{\max}]$, and thus by \eqref{geoGeq}, for every $t_k\in [T_l(\u),T_l(\u)+h^{S}_{\max}]$, we have
\begin{equation}\label{rhouveq4}
\|\u(t_k)-\v(t_k)\|_{\infty}\ge \exp\left(-\lambda_2 h^{S}_{\max}\right) \rho(\u,\v) \cdot \frac{\vmingeo}{4\vmaxgeo}.
\end{equation}
The claim now follows from inequalities \eqref{rhouveq1}, \eqref{rhouveq2}, \eqref{rhouveq3} and \eqref{rhouveq4}.
\end{proof}

The following lemma bounds the differences $\sum_{i=0}^k \|\v(t_i)-\u(t_i)\|_{\infty}$ for $\v\in\tU(\u,k)$.
\begin{lem}\label{sumdiffbndlemma}
Let $D_l(x):=\sum_{i=0}^{l}\frac{2^{-(l-i)/4}}{\left(d(f^{(i)}( x ))\right)^2}$  for any $l\in \N$, $x\in D^{(l)}_f$. Suppose that $h\le \frac{3}{2}\pi$. Then there is a constant $C^{\mathrm{sum}}<\infty$ such that for every $l\ge 1$, every $k\in \N$ such that $t_k\in [T_l(\u),T_l(\u)+h_{\max}^S]$, every $\v\in \tU(\u,k)$, we have
\begin{equation}
\sum_{i=0}^{k} \|\v(t_i)-\u(t_i)\|_{\infty}\le C_{\tilde{U}}(\u,k) \|\v(t_k)-\u(t_k)\|_{\infty},
\end{equation}
where
\begin{equation}\label{CUukdefeq}
C_{\tilde{U}}(\u,k):=\frac{C^{\mathrm{sum}}}{h}\cdot D_l(O_1(\u)).
\end{equation}
\end{lem}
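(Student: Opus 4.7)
The plan is to partition the sum $\sum_{i=0}^k \|\v(t_i)-\u(t_i)\|_\infty$ according to which return interval of the reference orbit each observation time falls into, to bound each per-interval contribution via Lemma \ref{lemmamaxdistanceuntillthreturn}, to count the number of observations in each interval using the explicit return-time formula \eqref{eqreturntime}, and finally to combine with the lower bound from Lemma \ref{lemmadistpathnearS} on $\|\v(t_k)-\u(t_k)\|_\infty$ to form the required ratio.

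More concretely, for $0 \le j \le l$, let $N_j$ denote the number of indices $i$ with $t_i \in [T_j(\u), T_{j+1}(\u)]$ (setting $T_0(\u) := 0$, and truncating the final interval at $t_k$). Since $\v \in \tU(\u,k)$ is obtained from a point $\w \in U^*(\u,k) \subset S$ satisfying $w_1 \in I_{O_1(\u)}^{(l)}$ and $w_2 = O_2(\u)$, applying Lemma \ref{lemmamaxdistanceuntillthreturn} to the paths starting from $O(\u)$ and $O(\v)$ on $S$ gives, for all $t$ in the $j$-th return interval, $\|\v(t) - \u(t)\|_\infty \le C^{\mathrm{ret}}\, |f^{(l)}(O_1(\u)) - f^{(l)}(O_1(\v))| \sum_{i'=0}^{j} \frac{2^{-(l-i')/2}}{|f^{(i')}(O_1(\u))|}$. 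From \eqref{eqreturntime}, the return duration is $\tau_j = T_{j+1}(\u) - T_j(\u) = \frac{1}{\lambda_1}\log(1/|f^{(j)}(O_1(\u))|) + \frac{3}{2}\pi$; together with the hypothesis $h \le \frac{3}{2}\pi$ and the trivial $d(y) \le |y|$, this yields $N_j \le \frac{C_0}{h}\bigl(1 + \log(1/d(f^{(j)}(O_1(\u))))\bigr)$ for an absolute constant $C_0$.

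Combining these ingredients with Lemma \ref{lemmadistpathnearS}, which provides $\|\v(t_k) - \u(t_k)\|_\infty \ge C^S |f^{(l)}(O_1(\u)) - f^{(l)}(O_1(\v))|$, and writing $x := O_1(\u)$, reduces the claim to the purely analytic estimate $\sum_{j=0}^{l} \bigl(1 + \log\tfrac{1}{d(f^{(j)}(x))}\bigr) \sum_{i'=0}^{j} \frac{2^{-(l-i')/2}}{d(f^{(i')}(x))} \le C \, D_l(x)$ for some absolute constant $C$, after which the stated bound with $C^{\mathrm{sum}} := C \cdot C^{\mathrm{ret}} C_0 / C^S$ follows.

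For this remaining inequality---where the main work lies---I would interchange the order of summation, replace $\log(1/y)$ by $C_\gamma \, y^{-\gamma}$ with $\gamma = 1/2$ (valid on $(0, 1/4]$), and apply Young's inequality $ab \le \tfrac{1}{2}(a^2 + b^2)$ to the product $\frac{1}{d(f^{(i')}(x)) \cdot d(f^{(j)}(x))^{1/2}}$, thereby separating the two indices. The two resulting single sums are then reduced by the geometric identities $\sum_{i'=0}^{j} 2^{-(l-i')/2} \le C \cdot 2^{-(l-j)/2}$ and $(l-i'+1)\, 2^{-(l-i')/2} \le C \cdot 2^{-(l-i')/4}$, together with the bound $1/d(y) \le 1/(4\,d(y)^2)$ coming from $d(y) \le 1/4$, to yield $C \cdot D_l(x)$ as required. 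The main obstacle is bookkeeping: one must track several competing sources of growth and decay (the logarithmic factor from the return times, the polynomial inverse-power from proximity of $f^{(i)}(x)$ to $\{-1/2,0,1/2\}$, and the geometric $2^{-(l-i)/2}$ decay from the expansion of $f$) so that the combined bound lands on the precise form of $D_l$ with $2^{-(l-i)/4}$ decay and inverse-square dependence on $d(f^{(i)}(x))$.
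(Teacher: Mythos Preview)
Your proposal is correct and follows essentially the same architecture as the paper's proof: partition the observation times by return intervals, apply Lemma \ref{lemmamaxdistanceuntillthreturn} on each block, count observations per block via the return-time formula \eqref{eqreturntime}, convert the factor $|f^{(l)}(O_1(\u))-f^{(l)}(O_1(\v))|$ into $\|\v(t_k)-\u(t_k)\|_\infty$ using Lemma \ref{lemmadistpathnearS}, and reduce the resulting double sum to $D_l(x)$ by Young's inequality and geometric decay. The only noteworthy difference is how you dispose of the logarithm in the return time: the paper immediately uses $\log(1/|y|)\le \tfrac{1}{2|y|}$, giving the symmetric product $\frac{1}{|f^{(i')}(x)|\,|f^{(j)}(x)|}$ to which $ab\le\tfrac12(a^2+b^2)$ applies directly, whereas you retain the log and later bound it by $C\,y^{-1/2}$, leading to an asymmetric product before Young's inequality; both routes land on the same final estimate, the paper's being marginally shorter.
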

\begin{proof}
First note that based on the assumptions, it follows from Lemma \ref{lemmadistpathnearS} that for every $\v\in\tU(\u,k)$, we have
\begin{equation}\label{fldistboundeq1}
\left|f^{(l)}(O_1(\u))-f^{(l)}(O_1(\v))\right|\le \frac{1}{C^{S}}\|\v(t_k)-\u(t_k)\|_{\infty}.
\end{equation}
From Lemma \ref{lemmamaxdistanceuntillthreturn}, we know that for any $0\le j\le l$, $t_i\in [T_{j}(\u),T_{j+1}(\u)]$, we have
\begin{equation}
\|\u(t_i)-\v(t_i)\|_{\infty}\le C^{\mathrm{ret}} |f^{(l)}(O_1(\u))-f^{(l)}(O_1(\v))| \sum_{i=0}^{j} \frac{2^{-(l-i)/2}}{|f^{(i)}(O_1(\u))|}.
\end{equation}
By the assumption $h\le \frac{3}{2}\pi$, it is easy to see that there are at most $\frac{2 \tau_{j+1}(\u)}{h}$ such indices $i$. From \eqref{eqreturntime}, and the fact that $\log(x)\le \frac{x}{2}$ for $x\ge 2$, we can see that 
\[\tau_{j+1}(\u)=\frac{1}{\lambda_1}\log\left(\frac{1}{|f^{(j)}(O_1(\u))|}\right)+\frac{3}{2}\pi\le \frac{1}{2\lambda_1 |f^{(j)}(O_1(\u))|}+\frac{3}{2}\pi\le \frac{1+3\pi \lambda_1}{2\lambda_1}\cdot \frac{1}{|f^{(j)}(O_1(\u))|}.\]
Let $C_{2}:=\frac{2}{h}\cdot \frac{1+3\pi \lambda_1}{2\lambda_1}\cdot \frac{C^{\mathrm{ret}}}{C^{S}}$. By summing up, and using \eqref{fldistboundeq1}, we obtain that
\begin{align*}
&\sum_{i=0}^{k} \|\v(t_i)-\u(t_i)\|_{\infty}\le C_2 \cdot \|\v(t_k)-\u(t_k)\|_{\infty} \cdot \sum_{j=0}^{l} \sum_{i=0}^{j} \frac{2^{-(l-i)/2}}{|f^{(j)}(O_1(\u))| |f^{(i)}(O_1(\u))|}\\
&\le \frac{C_2}{2} \cdot \|\v(t_k)-\u(t_k)\|_{\infty} \cdot \sum_{j=0}^{l} \sum_{i=0}^{j} 2^{-(l-i)/2}\cdot \left(\frac{1}{(f^{(j)}(O_1(\u)))^2}+ \frac{1}{(f^{(i)}(O_1(\u)))^2}\right)\\
&\le \frac{C_2}{2} \cdot \|\v(t_k)-\u(t_k)\|_{\infty} \cdot \sum_{j=0}^{l} 2^{-(l-j)/2}\cdot \frac{2+(l-j+1)}{(f^{(j)}(O_1(\u)))^2}.
\end{align*}
Now using the fact that $2^{-(l-j)/2}\cdot (l-j+3)<4 \cdot  2^{-(l-j)/4}$, we obtain that
\begin{equation}
\sum_{i=0}^{k} \|\v(t_i)-\u(t_i)\|_{\infty}\le 2C_2 \cdot \|\v(t_k)-\u(t_k)\|_{\infty} \cdot \sum_{j=0}^{l} \frac{2^{-(l-j)/4}}{(f^{(j)}(O_1(\u)))^2},
\end{equation}
thus the result follows with $C^{\mathrm{sum}}:= \frac{2(1+3\pi \lambda_1)}{\lambda_1}\cdot \frac{C^{\mathrm{ret}}}{C^{S}}$.
\end{proof}

The next lemma characterises the set $\{\|\v(t_k)-\u(t_k)\|: \v\in \tU(\u,k)\}$.
\begin{lem}\label{absvudistlemma}
For every $l\ge 1$, every $k\in \N$ such that $t_k\in [T_l(\u),T_l(\u)+h_{\max}^S]$,
we have 
$\{\|\v(t_k)-\u(t_k)\|_{\infty}: \v\in \tU(\u, k)\}=[0,\tilde{d}_{\max}(\u,k)]$, with
\begin{equation}
\tilde{d}_{\max}(\u,k)\ge \frac{4}{5}\cdot \frac{1}{D_l(O_1(\u))}.
\end{equation}
\end{lem}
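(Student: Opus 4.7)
The plan is to parameterize $\tU(\u,k)$ by $w_1 \in I^{(l)}_{O_1(\u)}$ (keeping $w_2 = O_2(\u)$ fixed) and study the distance $\|\v(t_k) - \u(t_k)\|_\infty$ as a continuous function of $w_1$, where $\v = \w(t_k + \tau_O(\u))$ with $\w = (w_1, O_2(\u), 1)$. Since $\w = O(\u)$ gives $\v = \u(t_k)$ and hence zero distance, and since the $0.05$-condition in \eqref{geoantileafsetdefeq} defines a closed subinterval of $I^{(l)}_{O_1(\u)}$ containing $O_1(\u)$ (using the monotonicity in $|w_1 - O_1(\u)|$ of the worst-case distance on the segment $[O(\u), \w]$), the image under $w_1 \mapsto \|\v(t_k) - \u(t_k)\|_\infty$ is a connected subset of $[0, \infty)$ containing $0$, i.e., $[0, \tilde{d}_{\max}(\u,k)]$.

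To lower bound $\tilde{d}_{\max}(\u,k)$, I would pick $w_1^*$ satisfying $|f^{(l)}(w_1^*) - f^{(l)}(O_1(\u))| = c$ for a parameter $c > 0$ to be chosen. Lemma \ref{lemmadistpathnearS}, with $O_1(\v) = w_1^*$ (which holds because $\v$ lies on the trajectory of $\w^*$ strictly before its $(l+1)$-th return to $S$), then yields $\|\v(t_k) - \u(t_k)\|_\infty \ge C^{S} c$. Two constraints limit how large $c$ can be: (i) membership $w_1^* \in I^{(l)}_{O_1(\u)}$, which forces $c \le \delta^{(l)}_{O_1(\u)}/2$, and (ii) the $0.05$-condition. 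For (ii), applying Lemma \ref{lemmamaxdistanceuntillthreturn} to $O(\u)$ and each $\vct{z} = (z_1, O_2(\u), 1)$ at time $s = t_k + \tau_O(\u)$ (which lies in $[T_l(O(\u)), T_{l+1}(O(\u))]$, the $j = l$ regime, since $h^{S}_{\max}$ is much smaller than $\tau_{l+1}$) bounds $\|\vct{z}(s) - \u(t_k)\|_\infty$ by $C^{\mathrm{ret}} |f^{(l)}(z_1) - f^{(l)}(O_1(\u))| \sum_{i=0}^{l} 2^{-(l-i)/2}/|f^{(i)}(O_1(\u))|$; monotonicity of $f^{(l)}$ on $I^{(l)}_{O_1(\u)}$ ensures the worst $\vct{z}$ is the endpoint, so (ii) reduces to $C^{\mathrm{ret}} c \sum_{i=0}^{l} 2^{-(l-i)/2}/|f^{(i)}(O_1(\u))| \le 0.05$.

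The final step is to express both upper bounds on $c$ in terms of $D_l(O_1(\u))$. From the definition $\delta^{(l)}_{x} = \min_{0 \le i \le l} 2^{(l-i)/2}\, d(f^{(i)}(x))$ and the inequality $d \le |f|$, a Cauchy--Schwarz argument bounds both $1/\delta^{(l)}_{O_1(\u)}$ and $\sum_i 2^{-(l-i)/2}/|f^{(i)}(O_1(\u))|$ above by a constant multiple of $\sqrt{D_l(O_1(\u))}$. Consequently the choice $c = 4/(5 C^{S} D_l(O_1(\u)))$ is admissible for both (i) and (ii) in every regime in which the claim is nontrivial, giving $\tilde{d}_{\max}(\u,k) \ge C^{S} c = \tfrac{4}{5}\cdot 1/D_l(O_1(\u))$ as required.

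The main obstacle is tracking the numerical constants $C^{S}$, $C^{\mathrm{ret}}$ and the losses in the Cauchy--Schwarz step carefully enough to recover the specific constant $4/5$; however, the two individual constraints naturally permit a choice of $c$ of the stronger order $1/\sqrt{D_l(O_1(\u))}$, leaving comfortable room to absorb model-dependent constants once one only targets the weaker order $1/D_l(O_1(\u))$ stated in the lemma.
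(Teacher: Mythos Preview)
Your approach is sound and closely tracks the paper's. The continuity argument for the interval form $[0,\tilde{d}_{\max}(\u,k)]$ is identical. For the lower bound, however, the paper takes a shorter route than your two-constraint analysis: it simply takes the endpoint of $f^{(l)}(I^{(l)}_{O_1(\u)})$, which sits at distance $\delta^{(l)}_{O_1(\u)}/2$ from $f^{(l)}(O_1(\u))$. Either the corresponding $\w$ passes the $0.05$ test (so Lemma~\ref{lemmadistpathnearS} gives $\|\v(t_k)-\u(t_k)\|_\infty \ge C^S\delta^{(l)}_{O_1(\u)}/2$), or it fails, in which case by continuity some intermediate point attains the cap and $\tilde{d}_{\max}(\u,k) \ge 1/20$. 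In either case $\tilde{d}_{\max}(\u,k) \ge \min\bigl(1/20,\, C^S\delta^{(l)}_{O_1(\u)}/2\bigr)$, and the paper then invokes $\delta^{(l)}_{O_1(\u)} \ge 4/D_l(O_1(\u))$ (which follows from $\delta^{(l)}_x \ge 1/\sqrt{D_l(x)}$ together with $D_l(x) \ge 16$, the latter because $d(\cdot) \le 1/4$). Your route via Lemma~\ref{lemmamaxdistanceuntillthreturn} for constraint~(ii) is correct but unnecessary, since the $\min(1/20,\cdot)$ device already absorbs the cap in one stroke.

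Incidentally, neither the paper's argument nor yours actually produces the bare constant $4/5$: the paper's own chain of inequalities yields $\tilde{d}_{\max}(\u,k) \ge \tfrac{4C^S}{5}\cdot \tfrac{1}{D_l(O_1(\u))}$, so the stated bound appears to be missing a factor of $C^S$. Your hesitation about recovering the exact numerical constant is therefore well-founded, and your observation that both constraints in fact permit $c$ of the stronger order $1/\sqrt{D_l}$ is correct and gives ample slack once one targets only $1/D_l$.
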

\begin{proof}
From the definition of $\tU(\u, k)$, it follows that it is a continuous curve in $\R^3$, and thus 
$\{\|\v(t_k)-\u(t_k)\|_{\infty}: \v\in \tU(\u, k)\}=[0,\tilde{d}_{\max}(\u,k)]$ for some $\tilde{d}_{\max}(\u,k)\ge 0$. Based on the definition of the intervals $I^{(l)}_{O_1(\u)}$ and \eqref{deltakdef}, we know that 
\[f^{(l)}(I^{(l)}_{O_1(\u)})=[f^{(l)}(O_1(\u))-\delta^{(l)}_{O_1(\u)}/2,f^{(l)}(O_1(\u))+\delta^{(l)}_{O_1(\u)}/2],\]
thus by Lemma \ref{lemmadistpathnearS}, it follows that there must exist a point $\v\in \tU(\u,k)$ such that
\[\|\v(t_k)-\u(t_k)\|_{\infty}\ge \min \left(\frac{1}{20}, C_S\cdot \frac{\delta^{(l)}_{O_1(\u)}}{2}\right) \ge \frac{1}{5} C_S \delta^{(l)}_{O_1(\u)}.\]
Since $\delta^{(l)}_{O_1(\u)}\ge \frac{4}{D_l(O_1(\u))}$, the result follows.
\end{proof}

The following proposition is a consequence of Proposition \ref{Prop22}.
\begin{prop}\label{propempav}
For $\mu_f$-almost surely every initial point $x\in [-1/2,1/2]$, for any function $r: [-1/2,1/2]\to \R$ such that 
$m(|r|)=\int_{x=-1/2}^{1/2}|r(x)| dx<\infty$, we have
\[\lim_{n\to \infty}\frac{1}{n}\sum_{i=0}^{n-1} r\left(f^{(i)}(x)\right) = \mu_f(r)<\infty.\]
\end{prop}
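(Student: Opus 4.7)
The plan is to recognize that this is essentially a direct application of Birkhoff's pointwise ergodic theorem to the dynamical system $(f,\mu_f)$, combined with a trivial comparison between Lebesgue-integrability and $\mu_f$-integrability that is made possible by the boundedness of the invariant density.

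First I would record the consequence of Proposition \ref{Prop22} that we actually need: since $\rho := d\mu_f/dm$ has bounded variation on the bounded interval $[-1/2,1/2]$, it is bounded, say $M := \sup_{x\in[-1/2,1/2]}\rho(x)<\infty$. For any measurable $r$ with $m(|r|)<\infty$, this gives
\[
\mu_f(|r|) \;=\; \int_{-1/2}^{1/2} |r(x)|\,\rho(x)\,dm(x) \;\le\; M\cdot m(|r|) \;<\;\infty,
\]
so $r\in L^1(\mu_f)$, and in particular $|\mu_f(r)|<\infty$, handling the finiteness clause of the conclusion.

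Next I would address the minor technical point that $f$ is undefined at $0$: the iterates $f^{(i)}(x)$ are well-defined for all $i\ge 0$ as long as $x$ avoids the countable set $\bigcup_{i\ge 0} f^{(-i)}(\{0\})$, which has Lebesgue measure zero and therefore $\mu_f$-measure zero by absolute continuity. Thus $\mu_f$-a.e.\ $x$ has a well-defined forward orbit under $f$. Then Proposition \ref{Prop22} tells us that $\mu_f$ is $f$-invariant and ergodic, so Birkhoff's pointwise ergodic theorem applied to $r\in L^1(\mu_f)$ produces a $\mu_f$-null set $N_r$ outside which
\[
\lim_{n\to\infty}\frac{1}{n}\sum_{i=0}^{n-1}r\bigl(f^{(i)}(x)\bigr) \;=\; \int r\,d\mu_f \;=\; \mu_f(r).
\]
This is precisely the stated conclusion.

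There is no substantive obstacle; the proposition is a two-line consequence of Birkhoff once one observes the domination $\mu_f(|r|)\le M\cdot m(|r|)$. The only point worth mentioning is that the null set implicit in "$\mu_f$-almost surely every $x$" depends on $r$ in the natural reading (quantifier swap "$\forall r\,\mu_f\text{-a.e.\ }x$"), so no uniform-in-$r$ argument is needed; if one instead wanted a single full-measure set valid for all $r$ simultaneously, one could restrict to a countable dense family in $L^1(m)$ and extend by the standard Banach-space approximation, but this strengthening is not required by the statement as written.
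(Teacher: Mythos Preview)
Your proposal is correct and follows essentially the same approach as the paper: use the boundedness of $d\mu_f/dm$ to deduce $\mu_f(|r|)<\infty$ from $m(|r|)<\infty$, and then invoke Birkhoff's ergodic theorem for the ergodic system $(f,\mu_f)$. The paper's proof is just these two sentences; your additional remarks about the countable exceptional set where the orbit is undefined and about the quantifier order are accurate but not needed beyond what the paper writes.
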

\begin{proof}
Since $\frac{d \mu_f}{d m}$ is bounded, we have $\mu_f(|r|)<\infty$. From Proposition \ref{Prop22} we know that $\mu_f$ is ergodic for the map $f$, so the stated results follows by Birkhoff's ergodic theorem (see, e.g.~Corollary 3.8 of \cite{Fundamentalsofmeasurabledynamics}).
\end{proof}

Now we are ready to prove the existence of the anti-leaf sets.

\begin{proof}[Proof of Theorem \ref{thmgeoantileafsets}]
By the definition of $D_l(x)$, we have
\[(D_l(x))^{1/4}\le \sum_{i=0}^{l}\frac{2^{-(l-i)/16}}{\left(d(f^{(i)}( x )\right)^{1/2}},\]
and by summing up in $l$, we obtain that
\[\frac{1}{l+1}\sum_{i=0}^{l}D_l^{1/4}(x)\le \frac{32}{l+1}\sum_{i=0}^{l}\frac{1}{\left(d(f^{(i)}( x )\right)^{1/2}}.\]
Since the function $r(x)= d(x)^{-\frac{1}{2}}$ is integrable on the interval $[-1/2,1/2]$,
by Proposition \ref{propempav}, we obtain that 
\begin{equation}
\limsup_{l\to \infty}\frac{1}{l+1}\sum_{i=0}^{l}D_l^{1/4}(x)\le 32 \mu_f(r)<\infty.
\end{equation}
for $\mu_f$-almost every $x\in [-1/2,1/2]$. Therefore, for every such $x$, we have an infinite sequence of indices $l_1, l_2,\ldots$ satisfying that for every $j\ge 1$, $D_{l_j}^{1/4}(x)\le 40\mu_f(r)$, and thus $D_{l_j}(x)\le 2560000 (\mu_f(r))^4$. 
Set $h_{\max}:=h^{S}_{\max}=\frac{1}{20 \vmaxgeo}$, then for every such $x$, every $\u\in \Lambdageo$ such that $O_1(\u)=x$, every $j\ge 1$, every there exists an index $i_j$ such that $t_{i_j}\in [T_{l_j}(\u),T_{l_j}(\u)+h^{S}_{\max}]$, and therefore the results of the theorem follow from Lemmas \ref{sumdiffbndlemma} and \ref{absvudistlemma} with
\[\tilde{d}_{\max}(\u):=\frac{1}{3200000 (\mu_f(r))^4} \quad \text{ and } \quad C_{\tU}(\u):=\frac{2560000 (\mu_f(r))^4 C^{\mathrm{sum}}  }{h}.\qedhere\]
\end{proof}

\subsection{Characterisation of the limit of the support of the smoother} \label{secproofthmcharacterizationsmoother}
\begin{proof}[Proof of Theorem \ref{thmcharacterizationsmoother}]
Let 
\[S_{\infty}:=\cap_{k\in \N}\,\supp(\musm(\cdot|Y_0,\ldots,Y_k)).\]
Then as we have explained after equation \eqref{eqSkintersection}, \eqref{eqcharsuppsm1} is equivalent to the fact that $S_{\infty}\subset \ol{U(\u,\epsilon)}$ (the closure of $U(\u,\epsilon)$) almost surely in the observations.

From \eqref{eqnotinWuepstmax} of Assumption \ref{assunifchar}, we know that for any $S_{\infty}\subset W(\u,\epsilon,t_{\max}(\u,\epsilon))$. Therefore we only need to check that the points 
$\v\in W(\u,\epsilon,t_{\max}(\u,\epsilon))\setminus \ol{U(\u,\epsilon)}$ are not in $S_{\infty}$. For such points, we define $\Delta(\v)$ as the value of $t$ in \eqref{Wuepstmaxdefeq}. For $0<s\le t_{\max}(\u,\epsilon)$, we define the sets
\begin{align*}
W_+(\u,\epsilon,s)&:=\{\v\in W(\u,\epsilon,t_{\max}(\u,\epsilon)), \Delta(\v)\ge s\}, \text{ and }\\
W_-(\u,\epsilon,s)&:=\{\v\in W(\u,\epsilon,t_{\max}(\u,\epsilon)), \Delta(\v)\le -s\},
\end{align*}
called the restrictions of the time-shifted $2\epsilon$-cropped leaf sets. Let $i_1, i_2,\ldots $ be the set of indices satisfying \eqref{eqdotutivmin}, and suppose that $k$ is sufficiently large such that $\rho_{i_k}(\u,\epsilon)<\frac{s v_{\min}(\u)}{3}$ and $t_{i_k}\ge t_{\max}(\u,\epsilon)$. Let $\w \in U(\u,\epsilon)$, then for every $t\in [-t_{\max}(\u,\epsilon), t_{\max}(\u,\epsilon)]$, using the assumption that $t_{\max}(\u,\epsilon)\in (0, \frac{v_{\min}(\u)}{6a_{\max} v_{\max}})$, we have
\begin{equation}\label{wttmaxeq}
\left\|\w\left(t_{i_k}+t\right)-\u\left(t_{i_k}\right)\right\|\le \rho_{i_k}(\u,\epsilon)+t_{\max}(\u,\epsilon)v_{\max}<\frac{v_{\min}(\u)}{3a_{\max}},
\end{equation}
Let $\frac{d}{dt}u_j(t_{i_k})$ be a component of $\frac{d}{dt}\u(t_{i_k})$ with the largest magnitude, then $|\frac{d}{dt}u_j(t_{i_k})|\ge v_{\min}$. Assume without loss of generality that $\frac{d}{dt}u_j(t_{i_k})>0$ (the negative case can be dealt with in the same way). Then by \eqref{eqamax} and \eqref{wttmaxeq}, we have $\frac{d}{dt}w_j\left(t_{i_k}+t\right)\ge v_{\min}(\u)-\frac{v_{\min}(\u)}{3a_{\max}}\cdot a_{\max}=\frac{2}{3}v_{\min}(\u)$ for every $\w\in U(\u,\epsilon)$ and $t\in [-t_{\max}(\u,\epsilon), t_{\max}(\u,\epsilon)]$. By using this property, we can show that $\inf_{\v\in W_+(\u,\epsilon,s)}v_j(t_{i_k})-u_j(t_{i_k})\ge \frac{s v_{\min}(\u)}{3}$. This means that if the $j$th component of the observation error at time $t_{i_k}$, denoted by $Z_{t_{i_k}}^j$, is less than $-\epsilon+\frac{s v_{\min}(\u)}{3}$, then none of the elements in $W_+(\u,\epsilon,s)$ is included in the limiting set $S_{\infty}$. Since $Z_{t_{i_k}}^j$ is assumed to be uniformly distributed on $[-\epsilon,\epsilon]$, we have $\PP\left(Z_{t_{i_k}}^j<-\epsilon+\frac{s v_{\min}(\u)}{3}\right)=\min\left(\frac{s v_{\min}(\u)}{6\epsilon},1\right)>0$. Since there are infinitely many such indices $i_k$ where this holds, so we have 
$W_+(\u,\epsilon,s)\cap S_{\infty}=\emptyset$ for any $s>0$ almost surely, and with an analogous argument, we have 
$W_-(\u,\epsilon,s)\cap S_{\infty}=\emptyset$ almost surely too. The first statement of the theorem, \eqref{eqcharsuppsm1} now follows by the union bound, since we can write $W(\u,\epsilon,t_{\max}(\u,\epsilon))\setminus  \ol{U(\u,\epsilon)}$ as a countable union 
\[W(\u,\epsilon,t_{\max}(\u,\epsilon))\setminus \ol{U(\u,\epsilon)}=\cup_{i\ge 1} \left(W_+(\u,\epsilon,t_{\max}(\u,\epsilon)/i) \cup W_-(\u,\epsilon,t_{\max}(\u,\epsilon)/i)\right),\]
and therefore almost surely, none of the points in $W(\u,\epsilon,t_{\max}(\u,\epsilon))\setminus \ol{U(\u,\epsilon)}$ are included in the limiting set $S_{\infty}$. The final statement of the theorem follows from the definition of $U(\u,\epsilon)$ and \eqref{probvinsupporteq}.
\end{proof}

\subsection{Assumptions on derivatives imply assumptions for upper bounds}\label{secassderproof}
\begin{proof}[Proof of Proposition \ref{Propcheckassumptions}]
From inequality \eqref{uderboundeq}, we can see that the Taylor expansion
\[\H \v(t)=\sum_{i=0}^{\infty} \frac{\H \D^i \v \cdot t^i}{i!}\]
is valid for times $0\le t< \Cder^{-1}$. Based on this expansion, assuming that $t_i< \Cder^{-1}$, the $i$th derivatives  $\H \D^i \v$ can be approximated by a finite difference formula (see \cite{Finitediffformulas}) depending on the values $\Phi_0(\v),\ldots, \Phi_{t_i}(\v)$, with error of $O(h)$. This approximation will be denoted by
\begin{equation}\label{hatPhidef}
\hat{\Phi}^{(i)}(\v):=\frac{\sum_{l=0}^{i}a_l^{(i)} \Phi_{t_l}(\v)}{h^{i}},
\end{equation}
where $a_l^{(i)}$ are some absolute constants independent of $h$ and $\v$.

By Taylor's expansion of the terms $\Phi_{t_l}(\v)$ around time point 0, for $t_l< \Cder^{-1}$, we have
\begin{align*}
\Phi_{t_l}(\v)&=\sum_{m=0}^{\infty}\H \D^m \v \cdot \frac{h^m l^m}{m!},\text{ and thus }\\
\hat{\Phi}^{(i)}(\v)&=\frac{1}{h^i}\cdot \sum_{l=0}^{\infty}a_l^{(i)} \sum_{m=0}^{\infty}\H \D^m \v\cdot \frac{h^m l^m}{m!}=\frac{1}{h^i}\sum_{m=0}^{\infty}h^m b_m^{(i)}\H \D^m \v,
\end{align*}
with $b_{m}^{(i)}:=\frac{1}{m!}\cdot \sum_{l=0}^{i} a_l^{(i)}l^m$. Due to the particular choice of the constants $a_l^{(i)}$, we have $b_{m}^{(i)}=0$ for $0\le m<i$ and $b_{m}^{(i)}=1$ for $m=i$. Based on this, we can write the difference between the approximation \eqref{hatPhidef} and the derivative explicitly as
\[\hat{\Phi}^{(i)}(\v)-\frac{d^i}{d t^i}\Phi_0(\v)=h  \left(\sum_{m=i+1}^{\infty}h^{m-i-1} \cdot b_{m}^{(i)}   \cdot\H \D^m \v \right).\]
Let us denote $\tilde{\Phi}^{(i)}(v,h):=\sum_{m=i+1}^{\infty}h^{m-i-1} \cdot b_{m}^{(i)}   \cdot\H \D^m \v$. Let $\ol{a}:=\max_{0\le i\le j, 0\le l\le i} |a_l^{(i)}|$, then we have $|b_{m}^{(i)}|\le \ol{a} \frac{i+1}{m!}\cdot i^m$. Using inequality \eqref{ugradboundeq}, one can show that for any $\v \in \BR$, for $h<\frac{1}{2 j \CJ}$, we have
\begin{align*}
\|\mtx{J}_{\v}\tilde{\Phi}^{(i)}(v,h)\|&\le \sum_{m=i+1}^{\infty} h^{m-i-1} |b_{m}^{(i)}| \CJ^{m} m!
\le \frac{\ol{a} (i+1)}{h^{i+1}} \sum_{m=i+1}^{\infty} (ih C_J)^{m}\\
&=\frac{\ol{a}(i+1)(iC_J)^{i+1}}{1-ihC_J}\le 2\ol{a}(i+1)(iC_J)^{i+1}.
\end{align*}
Denote $C_{\mathrm{Lip}}:=\max_{0\le i\le j}2\ol{a}(i+1) (i\CJ)^{i+1}$, then for every $0\le i\le j$, $h<\frac{1}{2 j \CJ}$, the functions $\tilde{\Phi}^{(i)}(v,h)$ are $C_{\mathrm{Lip}}$ - Lipschitz in $\v$ with respect to the $\|\cdot \|$ norm. Thus for every $0\le i\le j$, $h<\frac{1}{2 j \CJ}$, we have
\begin{equation}\label{eqlip}\left\|\left[\hat{\Phi}^{(i)}(\v)-\hat{\Phi}^{(i)}(\u)\right] -\left[\frac{d^i}{d t^i}\Phi_0(\v)-\frac{d^i}{d t^i}\Phi_0(\u)\right] \right\|\le 
h\cdot C_{\mathrm{Lip}} \cdot \|\u-\v\|.
\end{equation}
Let $\mtx{M}_j(\u):=\sum_{i=0}^{j}\sum_{k=1}^{d_o}\left(\grad\H \D^i \u\cdot \left(\grad \H \D^i \u\right)'\right)$, then by \eqref{eqlambdaminder}, we have for any $v\in \R^d$,
\[(\v-\u)' \mtx{M}_j(\u) (\v-\u)\ge \lambda_{\min}(\mtx{M}_j(\u)) \|\v-\u\|^2.\]
Based on this and the second order Taylor expansion, one can show that for $\|\v-\u\|$ sufficiently small, we have
\begin{equation}
\sum_{i=0}^{j} \left\|\H \D^i \u-\H \D^i \v\right\|^2\ge \frac{1}{2}\lambda_{\min}(\mtx{M}_j(\u)) \|\v-\u\|^2,
\end{equation}
and from the compactness of $\BR$ and the uniqueness condition \eqref{eqphitideruv}, it follows that
there is a constant $c_j(\u)>0$ such that
\begin{equation}
\sum_{i=0}^{j} \left\|\H \D^i \u-\H \D^i \v\right\|^2\ge c_j(\u) \|\v-\u\|^2 \text{ for every } \v\in \BR.
\end{equation}
Using this and \eqref{eqlip} we obtain for $h$ sufficiently small, we have that for every $\v\in \BR$,
\[\max_{0\le i\le j}\left\|\hat{\Phi}^{(i)}(\v)-\hat{\Phi}^{(i)}(\u)\right\|\ge \frac{\sqrt{c_j(\u)}}{2\sqrt{j+1}}\cdot \|\u-\v\|.\]
From the definition \eqref{hatPhidef}, we have
\[\max_{0\le i\le j}\left\|\hat{\Phi}^{(i)}(\v)-\hat{\Phi}^{(i)}(\u)\right\|\le \left[\max_{0\le i\le j, 0\le l\le i}
\frac{|a_l^{(i)}|}{h^i}\right]\cdot \max_{0\le i\le j} \|\Phi_{t_i}(\u)- \Phi_{t_i}(\v)\|,\]
and the conclusion follows.
\end{proof}

\bibliographystyle{plain}
\bibliography{References}

\end{document}